\title{Elliptic curves with large Tate--Shafarevich groups over $\F_q(t)$} 
\author{%
 {\Large Richard \textsc{Griffon}\footnote{Correspondence to be sent to \href{mailto:richard.griffon@unibas.ch}{richard.griffon@unibas.ch}.}} 
\and 
{\Large Guus De \textsc{Wit}}}
\date{}
\theoremstyle{plain}%
\newtheorem{theo}{Theorem}[section]}
\theoremstyle{plain}%
\newtheorem{coro}[theo]{Corollary}}
\newtheorem{lemm}[theo]{Lemma}}
\newtheorem{prop}[theo]{Proposition}}
\newtheorem{itheo}{Theorem}}
\newtheorem{itheorem}{Theorem}}
\newtheorem{iconj}[itheorem]{Conjecture}}
\theoremstyle{definition}%
\newtheorem{defi}[theo]{Definition}}
\theoremstyle{remark}%
\theoremstyle{remark}%
\newtheorem{rema}[theo]{Remark}}
\titleformat{\subsection}[runin]%
        {\bfseries}% 
        {\thesubsection.}% 
        {0.2em}{}[.\hspace{0.3em}-- ]
\DeclareMathOperator{\rk}{rank}
\DeclareMathOperator{\ord}{ord}
\newcommand{\val}{v}
\DeclareMathOperator{\Gal}{Gal}
\DeclareMathOperator{\trace}{Tr}
\newcommand{\norm}{\mathbf{N}}
\DeclareMathOperator{\DEGRE}{deg }
\renewcommand{\deg}{\DEGRE}
\newcommand{\R}{\ensuremath{\mathbb{R}}}
\newcommand{\Q}{\ensuremath{\mathbb{Q}}}
\newcommand{\C}{\ensuremath{\mathbb{C}}}
\newcommand{\Z}{\ensuremath{\mathbb{Z}}}
\renewcommand{\P}{\ensuremath{\mathbb{P}}}
\newcommand{\F}{\ensuremath{\mathbb{F}}}
\renewcommand{\H}{\ensuremath{\mathrm{H}}}
\newcommand{\Qbar}{\ensuremath{\bar{\mathbb{Q}}}}
\newcommand{\e}{\ensuremath{\mathrm{e}}}
\newcommand{\dd}{\ensuremath{\,\mathrm{d}}}
\renewcommand{\bar}[1]{\ensuremath{\overline{#1}}}
\newcommand{\tam}{\tau}
\newcommand{\cond}{\mathcal{N}}
\newcommand{\tors}{_{\mathrm{tors}}}
\renewcommand{\epsilon}{\varepsilon}
\newcommand{\ie}{\textit{i.e.}{}}
\DeclareFontFamily{U}{russian}{}
\DeclareFontShape{U}{russian}{m}{n}
	{ <5><6> wncyr5
	<7><8><9> wncyr7
	<10><10.95><12><14.4><17.28><20.74><24.88> wncyr10 }{}
\DeclareSymbolFont{Russian}{U}{russian}{m}{n}
\DeclareSymbolFontAlphabet{\mathcyr}{Russian}
\let\@math@cyr\mathcyr
\renewcommand{\mathcyr}[1]{\@math@cyr{\cyracc #1}}
\newcommand{\sha}{\mathcyr{SH}}
\newcommand{\gP}{\mathfrak{P}}
\newcommand{\BS}{\mathfrak{Bs}}
\newcommand{\into}{\hookrightarrow}
\newcommand{\gauss}[3]{\mathrm{G}_{#1}(#2,#3)}
\newcommand{\gaun}{\hm{\mathsf{g}}}
\newcommand{\gauan}{\bm{\varepsilon}}
\newcommand{\kloos}[3]{\mathrm{Kl}_{#1}(#2;#3)}
\newcommand{\klo}{\mathsf{kl}}
\newcommand{\Kloosn}{\hm{\mathsf{Kl}}}
\newcommand{\kln}{\hm{\mathsf{kl}}}
\newcommand{\kloan}{\bm{\theta}}
\newcommand{\refGauss}[1]{\hyperref[#1]{\textsf{(Ga\,\ref*{#1})}}}
\newcommand{\refKloos}[1]{\hyperref[#1]{\textsf{(Kl\,\ref*{#1})}}}
\renewcommand{\O}{\mathcal{O}}
\begin{document}
\pagestyle{fancy}

\maketitle 

\noindent\hfill\rule{7cm}{0.5pt}\hfill\phantom{.}

\paragraph{Abstract --}
Let $\mathbb{F}_q$ be a finite field of odd characteristic $p$. 
We exhibit elliptic curves over the rational function field $K = \mathbb{F}_q(t)$ whose Tate-Shafarevich groups are large.
More precisely, we consider certain infinite sequences of explicit elliptic curves $E$, for which we prove that their Tate--Shafarevich group~${\sha(E)}$ is finite and satisfies $|\sha(E)| = H(E)^{1+o(1)}$ as $H(E)\to\infty$,
where $H(E)$ denotes the exponential differential height of $E$. 
The elliptic curves in these sequences are pairwise neither isogenous nor geometrically isomorphic.
We further show that the $p$-primary part of their Tate--Shafarevich group is trivial.

The proof involves explicitly computing the $L$-functions of these elliptic curves, 
proving the BSD conjecture for them, 
and obtaining estimates on the size of the central value of their $L$-function.

\medskip
\noindent {\it Keywords:} 
Elliptic curves over function fields, Tate-Shafarevich groups, Explicit computation of $L$-functions, BSD conjecture, Gauss and Kloosterman sums.

\smallskip
\noindent {\it 2010 Math.\ Subj.\ Classification:}  
11G05, %Elliptic curves over global fields
11G40, %L-functions of varieties over global fields; BSD conjecture 
14G10, %Zeta-functions and related questions
11L05. %Gauss and Kloosterman sums; generalisations

\noindent\hfill\rule{7cm}{0.5pt}\hfill\phantom{.}

 %% -- BEGIN CONTENT  -- %% 
\section*{Introduction}\setcounter{section}{0}

\subsection{The size of Tate--Shafarevich groups of elliptic curves}
Fix a finite field  $\F_q$ of odd characteristic $p$ and let $K:=\F_q(t)$ denote the rational function field over~$\F_q$.
Let $E$ be an elliptic curve over~$K$ which, we assume, is non-isotrivial (\ie{}, its $j$-invariant $j(E)\in K$ does not lie in $\F_q$). 
One attaches to $E$ its Tate--Shafarevich group, denoted by $\sha(E)$.
Among other reasons, the arithmetic significance of $\sha(E)$ stems from its measuring, in a certain sense, how badly the local--global principle fails for $E$. 
The Tate--Shafarevich group remains a mysterious object;
for instance, the finiteness of $\sha(E)$ is still conjectural in general, even though it has been proved in a number of cases.
Let us assume for now that $\sha(E)$ is indeed finite: what can then be said about its size?
There are several natural choices of numerical invariants of $E$ to compare $|\sha(E)|$ to.
Here, we choose the exponential differential height $H(E)$ and the conductor $N(E)$: these are defined by
\[ H(E) := q^{\frac{1}{12}\deg\Delta_{\min}(E)} \qquad \text{ and }\qquad 
    N(E) := q^{\deg\cond(E)},\]
where $\Delta_{\min}(E)$ and  $\cond(E)$ denote the minimal discriminant and the conductor divisors of $E$, respectively.
Goldfeld and Szpiro \cite{GoldfeldSzpiro} have proven upper bounds on the order of $\sha(E)$ in terms of these two invariants.
Let us quote a special case of their result:
	\begin{itheorem}[Goldfeld--Szpiro \cite{GoldfeldSzpiro}]
	\label{itheo.GoSz}
	Let $E$ be a non-isotrivial elliptic curve over $K$. 
	Assume that $E$ has finite Tate--Shafarevich group. 
	\begin{enumerate}[(i)]
	\setlength{\itemsep}{0pt}
	\item 
	Then, for all $\epsilon>0$, one has $|\sha(E)|\ll_{q, \epsilon} H(E)^{1+\epsilon}$.
	\item 
	If, moreover, $j(E)\in K$ is not a $p$-th power in $K$, then, 
	
	for all $\epsilon>0$, one has $|\sha(E)|\ll_{q, \epsilon} N(E)^{1/2+\epsilon}$.
	\end{enumerate}
	\end{itheorem}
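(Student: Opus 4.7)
The plan is to combine the function-field BSD formula (available since $\sha(E)$ is assumed finite) with an upper bound on the central $L$-value coming from Deligne's Riemann Hypothesis, and then translate the resulting bound on $|\sha(E)|$ into one involving $H(E)$ or $N(E)$ via the appropriate Szpiro-type inequality.

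\textbf{Step 1 (Invoke BSD).} The non-isotriviality of $E$ together with the finiteness of $\sha(E)$ places us in a regime where the BSD conjecture is a theorem (Artin--Tate, Milne, Kato--Trihan). It yields an identity, schematically
\[ \frac{L^*(E,1)}{(\log q)^r} = \frac{|\sha(E)| \cdot R(E) \cdot \tam(E)}{|E(K)\tors|^2 \cdot H(E)}, \]
where $r = \rk E(K)$, $L^*(E,1)$ is the leading Taylor coefficient at $s=1$, $R(E)$ is the regulator and $\tam(E) = \prod_v c_v$ the product of Tamagawa numbers. Rearranging, $|\sha(E)|$ is expressed as a ratio involving these quantities and $H(E)$.

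\textbf{Step 2 (Control auxiliary factors).} The torsion $|E(K)\tors|$ is uniformly bounded in terms of $q$ (function-field analogue of Mazur's theorem). Each $c_v \geq 1$, so $\tam(E) \geq 1$. Canonical heights on $E(K)$ are bounded below by a positive multiple of $\log q$, so $R(E) \gg_{q,r} 1$. Collecting these estimates, one obtains
\[ |\sha(E)| \ll_{q,r} H(E) \cdot |L^*(E,1)|. \]

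\textbf{Step 3 (Bound $|L^*(E,1)|$).} By Grothendieck, $L(E,s)$ is a polynomial in $q^{-s}$ of degree $\deg\cond(E) - 4$; by Deligne, its inverse roots all have modulus $\sqrt{q}$. The immediate factorization bound is $|L^*(E,1)| \leq (1 + q^{-1/2})^{\deg\cond(E)}$, and the target estimate is
\[ |L^*(E,1)| \ll_{q,\epsilon} N(E)^\epsilon \qquad \text{for every } \epsilon > 0. \]
This is the main analytic input.

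\textbf{Step 4 (Compare $H$ and $N$; conclude).} The conductor divides the minimal discriminant, so $N(E) \leq H(E)^{12}$. Substituting into the bound of Step 2 gives $|\sha(E)| \ll_{q,\epsilon} H(E)^{1+12\epsilon}$, yielding (i) after relabelling $\epsilon$. For (ii), the hypothesis $j(E) \notin K^p$ unlocks the function-field Szpiro inequality of Pesenti--Szpiro over $\F_q(t)$, namely $\deg\Delta_{\min}(E) \leq 6\,\deg\cond(E) + O(1)$, hence $H(E) \ll N(E)^{1/2}$. Combining with Step 2 and Step 3 gives $|\sha(E)| \ll_{q,\epsilon} N(E)^{1/2+\epsilon}$.

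\textbf{Main obstacle.} The real work is Step 3. The convexity bound from Deligne's RH only yields $|L^*(E,1)| \leq N(E)^{C_q}$ with $C_q = \log_q(1 + q^{-1/2})$, a positive exponent that does not vanish as the conductor grows. Obtaining an $N(E)^\epsilon$-bound for \emph{arbitrary} $\epsilon > 0$ requires a finer subconvex-type input --- for instance via Jensen's formula on the analytic factorisation, an explicit formula expressing $\log |L^*(E,1)|$ as a sum over places controlled by RH, or a mean-value estimate over a suitable family --- and this is the technical heart of the argument.
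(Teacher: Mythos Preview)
The paper does not prove this theorem; it is quoted from \cite{GoldfeldSzpiro}. The sole proof-related remark the paper makes is exactly your Step~4: item~{\it(ii)} follows from item~{\it(i)} via Szpiro's inequality $H(E)\le N(E)^{1/2}$, valid when $j(E)$ is not a $p$-th power (see \S\ref{sec.def.invariants}). So on the part the paper actually addresses, your proposal agrees.

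Your sketch of item~{\it(i)} has the right architecture, but besides the obstacle you flag there is a second, unacknowledged gap. In Step~2 the bound $R(E)\gg_{q,r}1$ carries an implicit constant depending on the rank~$r$, which is not uniformly bounded as $E$ varies over all non-isotrivial elliptic curves; one needs instead something like $R(E)\ge H(E)^{-\epsilon}$, and obtaining this requires a Lang--Silverman type lower bound on canonical heights together with the inequality $r\le b(E)$. As for your acknowledged obstacle in Step~3, the resolution is not subconvexity in the analytic-number-theory sense but rather \emph{equidistribution}: the Riemann Hypothesis, suitably applied, forces the inverse zeros of $L(E,T)$ to become equidistributed on the circle $|z|=q$ as $b(E)\to\infty$, and this yields $\log|L^\ast(E,1)|=o(b(E))$, hence $|L^\ast(E,1)|\ll_{q,\epsilon} N(E)^\epsilon$. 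The paper points to precisely this mechanism in \S\ref{sec.bnd.spval}, citing \cite[Thm.~7.5]{HP15}. (A minor correction: since the inverse zeros satisfy $|z_k|=q$, the trivial bound in Step~3 should read $|L^\ast(E,1)|\le 2^{b(E)}$ rather than $(1+q^{-1/2})^{b(E)}$.)
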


\noindent
In this statement, item {\it(ii)} follows from item {\it(i)} by applying Szpiro's inequality for elliptic curves with separable $j$-invariant.
It is then natural to wonder about the optimality of {\it(i)} and {\it(ii)}: apart from the $\epsilon$'s, are the exponents of the height ($1$) and of the conductor ($1/2$) in these upper bounds best possible? 
In other words, as $E$ ranges over all elliptic curves over $K$, what is the largest power of $H(E)$ -- or $N(E)$ -- that does appear in $|\sha(E)|$, up to a $\pm\epsilon$ for all $\epsilon>0$?

\medskip
In the analogous setting of elliptic curves over $\Q$, the analogue of Theorem \ref{itheo.GoSz} is known to follow from the ABC conjecture (see \S1--3 in \cite{GoldfeldSzpiro}).
Moreover, de Weger \cite{deWeger_ABC} conjectures that the exponents $1$ and $1/2$ should indeed be optimal in that context (see Conjectures 2 and 4 there).
Following the analogy between the arithmetics of elliptic curves over $\Q$ and over $K$, one can translate the statement of de Weger's conjecture  
(the quantities $H(E)$ and $N(E)$ here correspond to the $1/12$-th power of the minimal discriminant and to the conductor of an elliptic curve over $\Q$, respectively). 
This translation results in the following:
	\begin{iconj}[de Weger \cite{deWeger_ABC}]
	Assuming  finiteness of the relevant Tate--Shafarevich groups,
	\label{iconj.deW}
	\begin{enumerate}[(i)]
		\item\label{iconj.H} 
		For any $\epsilon>0$, there are infinitely many elliptic curves $E/K$ such that 
		$|\sha(E)|\gg_{q, \epsilon} H(E)^{1-\epsilon}$.
	
		\item\label{iconj.N} 
		For any $\epsilon>0$, there are infinitely many elliptic curves $E/K$ such that 
		$|\sha(E)|\gg_{q, \epsilon} N(E)^{1/2-\epsilon}$.
	\end{enumerate}	
	\end{iconj}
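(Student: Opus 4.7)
The statement is a conjecture, so I interpret the task as sketching how to establish Conjecture~\ref{iconj.deW}\emph{(i)} unconditionally along an explicit infinite family, which is precisely the programme announced in the abstract. The plan rests on the BSD formula: if $E/K$ has rank $0$ and BSD is known for it, then
\[
L(E, 1) \;=\; \frac{|\sha(E)| \cdot \prod_v c_v(E)}{H(E) \cdot |E(K)\tors|^2}.
\]
Producing a sequence $\{E_n\}$ of pairwise non-isogenous, non-isotrivial, rank-zero curves for which BSD is provable, with Tamagawa product $\prod_v c_v(E_n) = H(E_n)^{o(1)}$, bounded torsion, and central value $L(E_n,1) = H(E_n)^{o(1)}$, then yields $|\sha(E_n)| = H(E_n)^{1+o(1)}$, refining and implying part~\emph{(i)}. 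Part~\emph{(ii)} will additionally require $N(E_n) \leq H(E_n)^{2+o(1)}$, i.e.\ a sharp form of Szpiro's inequality across the family.

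\textbf{Explicit family and its $L$-function.} I would seek a one-parameter family $E_n/K$ of Weierstrass equations whose minimal discriminant degree grows linearly in $n$ and whose local Euler factors admit a closed-form evaluation in terms of classical exponential sums. The paper's keywords single out Gauss and Kloosterman sums; these are the natural carriers of local monodromy data for specific families (Hessian or Legendre pencils, twists parameterised by $t^n - \gamma$, and the like). The outcome of this step should be a closed polynomial formula
\[
L(E_n, T) \;=\; \prod_i (1 - \alpha_i T), \qquad |\alpha_i| = q^{1/2},
\]
where the $\alpha_i$ and the degree are expressed directly in terms of the chosen exponential sums and the geometry of the bad fibres.

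\textbf{BSD and control of auxiliary invariants.} For the family, BSD should follow by exhibiting a dominant rational map onto the elliptic surface $\mathcal{E}_n \to \P^1$ from a Fermat or Artin--Schreier surface on which the Tate conjecture is known (Tate, Shioda, Ulmer); equivalently, Kato--Trihan reduces BSD to the finiteness of the $\ell$-part of $\sha(E_n)$ for some $\ell$, which the explicit $L$-function provides as soon as $L(E_n, 1) \neq 0$ is confirmed. The Tamagawa product is then tabulated from the Kodaira types at the finitely many bad fibres, and the torsion is bounded uniformly via reduction modulo a well-chosen good prime; both quantities come out to be $H(E_n)^{o(1)}$.

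\textbf{The main obstacle: the lower bound on $L(E_n, 1)$.} The upper bound $L(E_n, 1) \leq H(E_n)^{o(1)}$ is essentially automatic from Theorem~\ref{itheo.GoSz} via BSD once the auxiliary invariants are under control; the substantive analytic content is the matching lower bound
\[
\log L(E_n, 1) \;=\; \sum_i \log|1 - \alpha_i q^{-1}| \;\geq\; -o(\log H(E_n)),
\]
which requires showing that the Frobenius angles $\arg(\alpha_i)$ do not cluster anomalously near $0$. I expect this to be the technical heart of the proof: from the Gauss/Kloosterman sum description one would derive sufficiently sharp, $n$-uniform equidistribution of the $\alpha_i$ in the spirit of Katz--Sarnak, then integrate the local singularity of $\theta \mapsto \log|1 - e^{i\theta}/\sqrt{q}|$ against the empirical measure of the angles. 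This is where quantitative averages of exponential sums must be extracted. Once that estimate is in hand, the BSD rearrangement closes the argument.
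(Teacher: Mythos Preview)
Your outline for part~\emph{(i)} is essentially the paper's main programme (Theorem~C and its proof in \S\S\ref{sec.family}--\ref{sec.large.sha}): an explicit family $E_{\gamma,a}$, an $L$-function expressed through Gauss and Kloosterman sums, BSD established, Tamagawa and torsion controlled exactly, and the crucial lower bound on $L(E_{\gamma,a},q^{-1})$ obtained from Sato--Tate equidistribution of Kloosterman angles together with a quantitative ``avoidance of $\{0,\pi/2,\pi\}$'' statement (Liouville's inequality). Two points of comparison are worth flagging. First, the paper's route to BSD is \emph{not} the geometric Tate-conjecture/Artin--Schreier-surface argument you suggest (that is how Pries--Ulmer did it), but rather an independent $p$-adic computation: the slopes of $L(E_{\gamma,a},T)$ are shown to be $\{1/2,3/2\}$, which forces $L(E_{\gamma,a},q^{-1})\neq 0$ and hence analytic rank $0$, from which full BSD follows by Tate--Milne. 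Second, the paper also gives a far simpler proof of part~\emph{(i)} alone (\S\ref{sec.large.sha.Frob}): iterate the Frobenius isogeny on any curve with $L(E,q^{-1})\neq 0$; the height grows while the central value stays constant. Your plan yields more (pairwise non-isogenous curves, trivial $p$-part of $\sha$), but if the goal is only Conjecture~\ref{iconj.deW}\emph{(i)} as stated, the Frobenius trick suffices.

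On part~\emph{(ii)}: you correctly identify that one would need $N(E_n)\leq H(E_n)^{2+o(1)}$, i.e.\ near-equality in Szpiro. Be aware that the paper does \emph{not} achieve this; the family has $H(E_{\gamma,a})=N(E_{\gamma,a})^{1/4}$, so the conclusion is only $|\sha|\geq N^{1/4-\epsilon}$ (Theorem~B), and Conjecture~\ref{iconj.deW}\emph{(ii)} with exponent $1/2$ remains open. Your sketch should not leave the impression that the same family settles both parts.
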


\noindent
In the same paper, de Weger proves, conditionally to the Birch and Swinnerton-Dyer (BSD) conjecture, that (the analogue of) Conjecture \ref{iconj.deW}\eqref{iconj.H} holds for elliptic curves over~$\Q$.
Prior to \cite{deWeger_ABC}, Mai and Murty \cite[Theorem 2]{MaiMurty_QuadTwists} had shown, again conditionally to the BSD conjecture,  a weaker version of  Conjecture \ref{iconj.deW}\eqref{iconj.N}  for elliptic curves over $\Q$, with the exponent $1/2$ replaced by $1/4$.
Both of these results rely on considering sequences of  well-chosen quadratic twists of a given elliptic curve.  

\medskip
%We now come back to the setting of elliptic curves over $K$. 
In the context of elliptic curves over $K$, the above-stated Conjecture~\ref{iconj.deW}\eqref{iconj.H}  is not difficult to prove.
One can indeed take advantage of the existence of inseparable isogenies of large degree to construct sequences of elliptic curves over $K$ with large Tate--Shafarevich groups (see \S\ref{sec.large.sha.Frob} where we build one such example). 
By construction of these sequences, however,  the elliptic curves therein are $K$-isogenous: one cannot, therefore,  hope to deduce from these any result towards Conjecture \ref{iconj.deW}\eqref{iconj.N}.
In these sequences, one also notices that the $p$-primary parts of the Tate--Shafarevich groups are ``large''; and that, actually, the order of the $p$-primary part already accounts for the observed ``large $\sha$'' phenomenon. 

We are therefore led to ask the following questions: {\it% 
For a given $\epsilon>0$, are there infinitely many \emph{pairwise non $K$-isogenous} elliptic curves~$E/K$ such that $|\sha(E)|\gg_{q, \epsilon} H(E)^{1-\epsilon}$?
If so, is the fact that their Tate--Shafarevich groups  are ``large''  always explained by a ``large $p$-primary part of $\sha$'' phenomenon?}

In this paper, we first give a positive answer to the first question: 
	\begin{itheo}\label{itheo.sha.H}
	For all $\epsilon>0$, there are infinitely many \emph{pairwise non $K$-isogenous} elliptic curves $E/K$ with finite Tate--Shafarevich group, 
	such that $|\sha(E)|\geq H(E)^{1-\epsilon}$.
	\end{itheo}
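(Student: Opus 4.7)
The plan, following the abstract's blueprint, is to exhibit an explicit infinite family $(E_n)_{n\geq 1}$ of non-isotrivial elliptic curves over $K$, compute their $L$-functions in closed form, establish the BSD conjecture for them, and then read off $|\sha(E_n)|$ from the BSD formula. I would fix a one-parameter family of Weierstrass equations whose trace of Frobenius at each place of $K$ expresses as a Gauss or Kloosterman sum over $\F_q$ (as the keywords suggest). The Grothendieck--Lefschetz trace formula then yields $L(E_n,T)$ as an explicit polynomial whose reciprocal roots, by Deligne's theorem, all have absolute value $q^{1/2}$. In particular, the analytic rank can be read off, and one should arrange the family so that it is equal to~$0$.

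To establish BSD for $E_n$, I would invoke the theorem of Kato--Trihan (extending work of Tate and Milne), which reduces BSD for elliptic curves over $\F_q(t)$ to the finiteness of some $\ell$-primary component of $\sha(E_n)$. This finiteness in turn follows from the Tate conjecture for divisors on the elliptic surface associated to $E_n$, which can be checked directly from the cohomological description coming from the first step. Once BSD holds and the analytic rank is~$0$, the BSD formula takes the schematic form
$$|\sha(E_n)| \;=\; \frac{L(E_n,1)\cdot H(E_n)\cdot |E_n(K)\tors|^2}{\prod_v \tam_v(E_n)},$$
so after bounding the torsion subgroup and the Tamagawa product by $H(E_n)^{o(1)}$ -- both of which should be standard given an explicit model -- Theorem~\ref{itheo.sha.H} reduces to proving the lower bound $L(E_n,1)\geq H(E_n)^{-o(1)}$ as $H(E_n)\to\infty$.

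This analytic lower bound is the main obstacle of the proof. The matching upper bound $L(E_n,1)\leq H(E_n)^{o(1)}$ is essentially automatic from the Riemann hypothesis, but the lower bound requires showing that the zeros of $L(E_n,T)$ do not cluster near $T=q^{-1}$. I would establish it by writing $\log L(E_n,1)$ as a sum over the zeros of $L(E_n,T)$ (or, equivalently, via an explicit Perron-type formula involving the coefficients), and controlling the resulting expression by using the explicit Gauss/Kloosterman-sum description of those coefficients together with Katz-type vertical equidistribution results for Kloosterman angles. Finally, the family $(E_n)$ is to be chosen so that the $j$-invariants $j(E_n)\in K$ (or the conductors $\cond(E_n)$) force the curves to lie in pairwise distinct $K$-isogeny classes, which yields the infinite sequence required in the statement.
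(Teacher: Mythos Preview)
Your proposal follows the same architecture as the paper, and most steps match. Two remarks.

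Your route to BSD via Kato--Trihan and the Tate conjecture is an unnecessary detour. Once the explicit $L$-function shows analytic rank~$0$ (the paper does this by computing the $p$-adic slopes of the Gauss and Kloosterman sums appearing in the factored $L$-function), Tate's inequality $\rk E_n(K)\leq \ord_{T=q^{-1}}L(E_n,T)=0$ already gives weak BSD, and then the classical Tate--Milne theorem gives full BSD directly, with no separate verification of the Tate conjecture for the associated surface needed.

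More importantly, your sketch of the lower bound $L(E_n,q^{-1})\geq H(E_n)^{-o(1)}$ omits a key ingredient. Equidistribution of Kloosterman angles controls averages of \emph{continuous} test functions, but $\log L(E_n,q^{-1})$ expands as a sum over places $v$ of terms with logarithmic singularities at the angles $0$, $\pi/2$, $\pi$. Equidistribution alone cannot rule out some angles sitting extremely close to these singular points, and a single such angle would already ruin the lower bound. The paper supplies the missing piece via a Liouville-type repulsion argument: each normalised Kloosterman sum $\mathsf{kl}_\gamma(v)/q^{\deg v/2}$ is an algebraic number of degree at most $2(p-1)$ and logarithmic height at most $\tfrac12\log q^{\deg v}$, and a $p$-adic unit computation shows it is never equal to $0$, $\pm 1$, or $\pm i$; Liouville's inequality then bounds its distance to each of those values from below by an explicit negative power of $q^{\deg v}$. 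This repulsion estimate, combined with equidistribution applied to a smoothed version of the singular integrand, is what actually delivers the lower bound. Without it, your argument as written does not close.
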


\noindent
We also prove a result in direction of Conjecture \ref{iconj.deW}\eqref{iconj.N}, with the same exponent $1/4$ as in \cite{MaiMurty_QuadTwists}:
	\begin{itheo}\label{itheo.sha.N}
	For all $\epsilon>0$, there are infinitely many \emph{pairwise non $\bar{K}$-isomorphic} elliptic curves $E/K$ with separable $j$-invariant and finite Tate--Shafarevich group,
	  such that $|\sha(E)|\geq N(E)^{1/4-\epsilon}$.	
	\end{itheo}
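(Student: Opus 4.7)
The plan is to extract Theorem \ref{itheo.sha.N} from Theorem \ref{itheo.sha.H} applied to the same explicit sequence $\{E_d\}_d$ of elliptic curves that witnesses the latter. By the features of the sequence announced in the abstract, each $E_d$ has separable $j$-invariant, and the curves in the sequence are pairwise neither isogenous nor geometrically isomorphic, so in particular pairwise non-$\bar K$-isomorphic. It therefore suffices to establish the height-conductor comparison
\[ N(E_d) \;\leq\; H(E_d)^{4+o(1)} \qquad \text{as } d\to\infty, \]
or equivalently $\deg\cond(E_d)\leq \tfrac{1}{3}\deg\Delta_{\min}(E_d)\,(1+o(1))$. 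Combined with the lower bound $|\sha(E_d)|\geq H(E_d)^{1-\epsilon}$ supplied by Theorem \ref{itheo.sha.H}, this immediately yields $|\sha(E_d)|\geq N(E_d)^{1/4-\epsilon'}$ for any prescribed $\epsilon'>0$ and all sufficiently large $d$.

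To establish the comparison, I would run Tate's algorithm at each place of bad reduction of $E_d$ and sum the local contributions. The key structural fact to verify is that, along the sequence, the dominant bad places carry multiplicative reduction of Kodaira type $I_{n_v}$ with $n_v$ growing linearly in $d$, while the corresponding local conductor exponents remain equal to $1$. This pins the asymptotic ratio $\deg\Delta_{\min}(E_d)/\deg\cond(E_d)$ to a limit bounded below by $3$, which is precisely what is needed. In practice the multiplicative part of the discriminant divisor can be read off directly from the explicit Weierstrass model of $E_d$, so this step should reduce to a concrete computation on the defining equations, supplemented by a separate (easy) analysis at $\infty$.

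The main obstacle lies in controlling the wild part of the conductor: in characteristic $p>0$, any place of additive reduction of $E_d$ may contribute a Swan exponent that inflates $\deg\cond(E_d)$ beyond its tame value and could, in principle, spoil the comparison. A direct local analysis at the finitely many places of additive reduction (using the explicit Weierstrass equations) is therefore needed to ensure that these contributions to $\deg\cond(E_d)$ are bounded, or at worst grow at a strictly slower rate than $\deg\Delta_{\min}(E_d)$. By contrast, Szpiro's inequality -- invoked in the introduction to deduce Theorem~\ref{itheo.GoSz}(ii) from (i) -- runs in the opposite direction and is of no help for the comparison required here; that the family nevertheless satisfies $H(E_d)\geq N(E_d)^{1/4}$ is a genuine input from the explicit form of the sequence, not a formal consequence of separability of $j$.
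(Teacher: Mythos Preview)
Your overall strategy matches the paper's exactly: one uses the same explicit family $E_{\gamma,a}$, checks that $j(E_{\gamma,a})$ is separable and that the curves are pairwise non $\bar K$-isomorphic, and deduces Theorem~\ref{itheo.sha.N} from Theorem~\ref{itheo.sha.H} via a height--conductor comparison obtained by running Tate's algorithm. The paper in fact obtains the \emph{exact} identity $H(E_{\gamma,a}) = N(E_{\gamma,a})^{1/4}$ (see \eqref{eq.invariants}), not merely an asymptotic inequality.

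Your anticipated mechanism for this comparison, however, is backwards. The finite places of bad reduction are all of type~$\mathbf{I}_1$ (multiplicative with $n_v=1$, not growing), and they contribute equally to discriminant and conductor (total $2q^a$ each). The ratio $\deg\Delta_{\min}/\deg\cond = 3$ is achieved because the single place $\infty$ has additive reduction of type~$\mathbf{I}^\ast_{4q^a}$, contributing $4q^a+6$ to the discriminant but only~$2$ to the conductor. So the ``dominant'' growth of the discriminant over the conductor comes from the additive place at~$\infty$, not from multiplicative fibres; your parenthetical that the analysis at~$\infty$ is ``easy'' and secondary is therefore misleading. On the other hand, your concern about wild conductor is quickly dispatched: since $p\neq 2$ and the reduction at~$\infty$ is of type~$\mathbf{I}^\ast_n$, the conductor exponent there is exactly~$2$ with no Swan term. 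Running Tate's algorithm as you propose would reveal all of this and salvage the argument.
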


In contrast to the aforementioned results concerning elliptic curves over $\Q$, Theorems \ref{itheo.sha.H} and~\ref{itheo.sha.N} are unconditional, and the involved elliptic curves are not quadratic twists of each other. 
Our proof is constructive and effective: we exhibit sequence(s) of elliptic curves over $K$ satisfying these properties and we provide explicit bounds on the order of their Tate--Sharafarevich groups.
We also prove that the $p$-primary parts of the involved Tate--Shafarevich groups are trivial, thus answering negatively the second question in italics raised above.

\subsection{Elliptic curves with large Tate--Shafarevich groups}

Theorems \ref{itheo.sha.H} and \ref{itheo.sha.N} both follow from our main theorem, which we now state. 
For any parameter $\gamma\in\F_q^\times$ and any integer $a\geq 1$, we write $\wp_a(t):=t^{q^a}-t\in\F_q[t]$ and consider the elliptic curve $E_{\gamma, a}$ defined over $K$ by the Weierstrass model:
\begin{equation}\label{ieq.Wmod}
E_{\gamma, a}:\qquad y^2=x\cdot\big(x^2+\wp_a(t)\cdot x + \gamma\big). \\
\end{equation}

\noindent
The main result of this paper is the following:
	\begin{itheo}\label{itheo.large.sha}
	In the above setting, 
	\begin{enumerate}[(1)]
	\item 
	As $\gamma\in\F_q^\times$  and $a\geq 1$ vary, the curves $E_{\gamma, a}$ are pairwise neither $\bar{K}$-isomorphic nor $K$-isogenous.
	\item 
	For any $\gamma\in\F_q^\times$ and $a\geq 1$, the Tate--Shafarevich group $\sha(E_{\gamma, a})$ is finite.
	\item\label{item.itheo.large.sha}
	Given $\gamma\in\F_q^\times$, as $a\to\infty$, we have $|\sha(E_{\gamma, a})| = H(E_{\gamma, a})^{1+o(1)}$.
	\item 
	For any $\gamma\in\F_q^\times$ and $a\geq 1$, the $p$-primary part of $\sha(E_{\gamma, a})$ is trivial.
	\end{enumerate}
	\end{itheo}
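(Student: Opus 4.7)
The plan is to carry out the programme outlined in the abstract: compute the $L$-function $L(E_{\gamma,a}, s)$ in closed form, deduce parts (1), (2), and (4) from this explicit formula, and combine the BSD formula with precise estimates on the central value $L(E_{\gamma,a}, 1)$ to obtain the asymptotic in part (3).

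First I would determine the minimal discriminant, the conductor, and the local $L$-factors of $E_{\gamma,a}$ by a place-by-place analysis of the Weierstrass model \eqref{ieq.Wmod}. The bad places lie above the zeroes of $\gamma\cdot(\wp_a(t)^2 - 4\gamma)$ and at infinity. The visible $2$-torsion point $(0,0)$ yields a $K$-rational $2$-isogeny $E_{\gamma,a}\to E'_{\gamma,a}$, which allows one to factor $L(E_{\gamma,a}, s)$ as a product of two Hecke $L$-functions of quadratic characters of $\F_q[t]$. Via the standard dictionary for function-field $L$-functions, each factor becomes a polynomial in $q^{-s}$ whose inverse roots are explicit Gauss and Kloosterman sums attached to $\wp_a$. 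From this closed form I immediately read off the degree of $L(E_{\gamma,a}, s)$ and, via the functional equation, the asymptotic behaviour of $H(E_{\gamma,a})$ as $a\to\infty$.

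Part (1) then follows by two independent checks: the $j$-invariant of $E_{\gamma,a}$ is an explicit rational function in $\wp_a(t)$ and $\gamma$ whose variation with $(\gamma,a)$ rules out $\bar{K}$-isomorphism, while the explicit $L$-function (an isogeny invariant) has degree and inverse-root set that also vary with $(\gamma,a)$, ruling out $K$-isogeny. For part (2), the closed form shows $L(E_{\gamma,a}, 1)\neq 0$, so the analytic rank is $0$; a $2$-descent using the rational $2$-isogeny bounds $\rk E_{\gamma,a}(K)$ from above by $0$, and the function-field BSD theorem (Kato--Trihan) then yields finiteness of $\sha(E_{\gamma,a})$ together with the numerical BSD formula. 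For part (4), the Gauss and Kloosterman sums obtained in Step~1 are $p$-adic units, so $L(E_{\gamma,a},1)$ is itself a $p$-adic unit; the torsion is prime to $p$ (being $2$-power, in view of the $2$-isogeny structure), and a $p$-adic analysis of the Tamagawa numbers -- based on the specific Kodaira types found in Step~1 -- forces the right-hand side of BSD to have $p$-adic valuation $0$, hence $\sha(E_{\gamma,a})[p^\infty]=0$.

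The analytic heart of the proof is part (3). Combining finiteness of $\sha$ with BSD, the formula in rank $0$ takes the shape
\[
|\sha(E_{\gamma,a})| \;\asymp\; \frac{L(E_{\gamma,a},1)\cdot H(E_{\gamma,a})\cdot |E_{\gamma,a}(K)\tors|^2}{\prod_v c_v(E_{\gamma,a})}\,.
\]
The torsion is uniformly bounded, and since the number of bad places is $O(\deg\wp_a) = O(\log H)$ and each Tamagawa number $c_v$ is controlled by the local data from Step~1, one gets $\prod_v c_v = H(E_{\gamma,a})^{o(1)}$. It remains to show $L(E_{\gamma,a}, 1) = H(E_{\gamma,a})^{\pm o(1)}$. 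The upper bound is routine given the Riemann hypothesis for $L(E_{\gamma,a}, s)$, automatic from the Weil bounds on the Gauss/Kloosterman sums computed in Step~1. The lower bound is the main obstacle: I would establish it by a Mertens-type estimate, truncating the partial Euler product at a carefully chosen height and invoking equidistribution results (\emph{à la} Katz) for the angles of the Gauss and Kloosterman sums to show that cancellation is limited. This partial-Euler-product argument is the delicate analytic step on which the exponent $1$ in $|\sha(E_{\gamma,a})|=H(E_{\gamma,a})^{1+o(1)}$ ultimately rests, and it is precisely where the techniques announced in the paper's title become indispensable.
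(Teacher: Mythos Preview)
Your overall architecture matches the paper's, but two of your proposed mechanisms are incorrect and would cause the argument to fail.

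\textbf{The $L$-function.} The $2$-isogeny coming from the point $(0,0)$ does \emph{not} factor $L(E_{\gamma,a},T)$ as a product of Hecke $L$-functions of quadratic characters: isogenous curves have the same $L$-function, and these curves have no CM-type structure over $K$ that would produce such a factorisation. The paper instead computes $L(E_{\gamma,a},T)$ directly from its definition by character-sum manipulations. The key identity is
\[
\sum_{z,x\in\F}\lambda_\F(x^3+zx^2+\gamma x)\,\psi(z)=\gauss{\F}{\psi}{\lambda}\cdot\kloos{\F}{\psi}{\gamma},
\]
and a Hasse--Davenport argument then gives \eqref{ieq.Lfunc}, a product over places $v\in P_q(a)$ of quadratics whose coefficients are a Gauss sum $\gaun(v)$ times a Kloosterman sum $\Kloosn_\gamma(v)$. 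Kloosterman sums, not quadratic Hecke characters, are the governing objects.

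\textbf{Part (4).} Your claim that ``the Gauss and Kloosterman sums are $p$-adic units, so $L(E_{\gamma,a},1)$ is a $p$-adic unit'' is false: quadratic Gauss sums over $\F_v$ have $\gP$-adic valuation $\tfrac{1}{2}\deg v$, not $0$. In fact the paper computes $\ord_\gP L(E_{\gamma,a},q^{-1})=-(q^a-1)/2$, which is large and negative. The triviality of $\sha[p^\infty]$ comes from an \emph{exact cancellation} in the BSD formula \eqref{ieq.link.sha.L}: one has $\ord_\gP\big(q^{-1}H(E_{\gamma,a})\big)=(q^a-1)/2$, and these two contributions cancel to give $\ord_\gP|\sha|=0$. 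If $L(E_{\gamma,a},q^{-1})$ were a $p$-adic unit as you assert, the same formula would force $\ord_\gP|\sha|=(q^a-1)/2$ and hence a \emph{huge} $p$-primary part, the opposite of what you want. (Relatedly, the non-vanishing $L(E_{\gamma,a},q^{-1})\neq 0$ in part~(2) is proved precisely via these $p$-adic slopes $\{1/2,3/2\}$, and no $2$-descent or Kato--Trihan is needed: Tate's inequality plus analytic rank $0$ already gives full BSD.)

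\textbf{Part (3).} Your outline here is closer to the paper's, but the ``Mertens-type truncation of the Euler product'' is not what is done. The paper evaluates the explicit finite product \eqref{ieq.Lfunc} at $T=q^{-1}$ and lower-bounds $\log L(E_{\gamma,a},q^{-1})$ by $\sum_{v\in P_q(a)}\log\big|\sin^2\kloan_\gamma(v)\cos^2\kloan_\gamma(v)\big|$, where $\kloan_\gamma(v)$ is the Kloosterman angle. Two ingredients control this sum: an effective Sato--Tate equidistribution statement for the set $\{\kloan_\gamma(v)\}_{v\in P_q(a)}$ as $a\to\infty$, and a quantitative repulsion of each $\kloan_\gamma(v)$ from the singular set $\{0,\pi/2,\pi\}$, obtained via Liouville's inequality applied to $\kln_\gamma(v)/q^{\deg v/2}$. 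You mention equidistribution but omit the repulsion step, which is essential since the integrand has logarithmic poles there.
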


\noindent
One can restate {\it (3)} as follows:   given $\epsilon>0$,  for any $\gamma\in\F_q^\times$ and any large enough integer $a\geq 1$ (depending on $\epsilon$), we have
\[ H(E_{\gamma, a})^{1-\epsilon} 
\leq |\sha(E_{\gamma, a})| 
\leq H(E_{\gamma, a})^{1+\epsilon}.\]
It is then clear that, for a given $\epsilon>0$, all but finitely many of the $E_{a, \gamma}$'s satisfy $|\sha(E_{\gamma, a})|\geq H(E_{\gamma, a})^{1-\epsilon}$.
Further, we will see that $H(E_{\gamma, a})=N(E_{\gamma, a})^{1/4}$, so that  $|\sha(E_{\gamma, a})|\geq N(E_{\gamma, a})^{1/4-\epsilon}$ holds for all but finitely many of the $E_{\gamma, a}$'s. 
In particular, Theorem \ref{itheo.large.sha} does imply Theorems \ref{itheo.sha.H} and \ref{itheo.sha.N}.
%The elliptic curves $E_{\gamma, a}$ are not ``semistable enough'', which is why we do not obtain a stronger result towards Conjecture \ref{iconj.deW}\eqref{iconj.N}. 

\subsection{Organisation of the paper}
We now explain the overall strategy of the proof of Theorem \ref{itheo.large.sha} as we describe the plan of the paper.
The first section briefly recalls the definitions and main properties of the objects that will be used throughout the paper. 
In the last subsection~\S\ref{sec.large.sha.Frob}, we show how the Frobenius isogenies allow one to construct  sequences of elliptic curves with large Tate--Shafarevich groups (thus proving Conjecture \ref{iconj.deW}\eqref{iconj.H} above).  
We found it worthwhile to include this construction since the argument in~\S\ref{sec.large.sha.Frob} illustrates on a simple example the general structure of the proof of Theorem \ref{itheo.large.sha}.

The elliptic curves $E_{\gamma, a}$ are introduced in section \ref{sec.family}, where we also calculate some of their more easily accessible invariants: 
their height and conductor, their Tamagawa number, as well as the torsion subgroup of $E_{\gamma, a}(K)$. 
Item {\it (1)} of Theorem \ref{itheo.large.sha} is proved in Proposition \ref{prop.noniso}.
The curves $E_{\gamma, a}$ were first studied by Pries and Ulmer in \cite{UlmerPries}: we recall in~\S\ref{sec.construction} how they were constructed there, and some of the results of \cite{UlmerPries}.

\medskip
A large part of the proof of Theorem \ref{itheo.large.sha} is analytic, in that we rely on a detailed study of the relevant $L$-functions. 
Our first main goal is therefore to obtain  an explicit expression for the $L$-function of $E_{\gamma,a}$.
To that end, we introduce some notation in section \ref{sec.prelim.Lfunc}.
We define a certain finite set $P_q(a)$ of places of $K$ and, to each place $v\in P_q(a)$ we attach in \S\ref{sec.gaun.kln} two character sums over the residue field of~$K$ at $v$: a Gauss sum~$\gaun(v)$ and a Kloosterman sum~$\Kloosn_\gamma(v)$. 
We then show (see Theorem \ref{theo.Lfunc}) that, for any integer $a\geq 1$  and any $\gamma\in\F_q^\times$, the $L$-function $L(E_{\gamma, a}, T)$ of~$E_{\gamma, a}$ admits the following expression:
\begin{equation}\label{ieq.Lfunc}
L(E_{\gamma, a}, T) = \prod_{v\in P_q(a)} \left(1-\gaun(v)\Kloosn_\gamma(v)\cdot T^{\deg v} +\gaun(v)^2 q^{\deg v}\cdot T^{2\deg v}\right).	
\end{equation}
This identity is proved in section \ref{sec.Lfunc} by an elementary method based on the definition of $L(E_{\gamma, a}, T)$ and manipulation of character sums.
This result, which is instrumental in the proof of our main theorem, may be of independent interest.

Using \eqref{ieq.Lfunc} and arithmetic properties of Gauss and Kloosterman sums, we elucidate in Theorem~\ref{theo.Lfunc.slopes} the $p$-adic valuations of the zeros of $L(E_{\gamma, a}, T)$.
As a consequence, we will deduce that $L(E_{\gamma, a}, T)$ does not vanish at the central point $T=q^{-1}$ for its functional equation (see Theorem~\ref{theo.nonvanishing.Lfunc}).
This non-vanishing result is enough to ensure that the BSD conjecture  holds for~$E_{\gamma, a}$ (Corollary~\ref{coro.BSD}) which, in turn, has several important corollaries for our study.
First, the Tate--Shafarevich group $\sha(E_{\gamma, a})$ is indeed (unconditionally) finite, as claimed in Theorem \ref{itheo.large.sha}{\it(2)}.
Secondly, we derive from the BSD formula and from our computations in section \ref{sec.family} that
\begin{equation}\label{ieq.link.sha.L}
|\sha(E_{\gamma, a})|	= q^{-1}H(E_{\gamma, a})\cdot L(E_{\gamma, a}, q^{-1}).
\end{equation}
Lastly -- even though this is less central to our point -- the  Mordell--Weil group $E_{\gamma, a}(K)$ is finite and, given our description of its torsion subgroup, we conclude that $E_{\gamma, a}(K) =\big\{\O, (0,0)\big\}$.

Given the link \eqref{ieq.link.sha.L} between the order of the Tate--Shafarevich group and the central value of $L(E_{\gamma, a}, T)$, 
we estimate the size of $|\sha(E_{\gamma, a})|$ in terms of $H(E_{\gamma, a})$ by proving adequate upper and lower bounds on~$L(E_{\gamma, a}, q^{-1})$.
Specifically, in order to show that Theorem \ref{itheo.large.sha}{\it(3)} holds we need to prove that 
\begin{equation}\label{ieq.goal.L}
	-o(1) \leq \frac{\log L(E_{\gamma, a}, q^{-1})}{\log H(E_{\gamma, a})} \leq o(1) \qquad (\text{as }a\to\infty).
\end{equation}
The proof of these inequalities is carried out in section \ref{sec.bnd.spval}.
Proving the lower bound in \eqref{ieq.goal.L} is the crucial step. 
After evaluating  expression  \eqref{ieq.Lfunc} for $L(E_{\gamma, a}, T)$ at $T=q^{-1}$, straightforward analytical considerations yield  that $L(E_{\gamma, a}, q^{-1})$  can be bounded from below by 
\[ \log L(E_{\gamma, a}, q^{-1})  
\geq \sum_{v\in P_q(a)} w\left(\frac{\Kloosn_\gamma(v)}{2{q}^{\deg v/2}}\right),
\quad\text{ where } w(x) := \log|x^2(1-x^2)|.\] 
For any place $v\in P_q(a)$, the algebraic number ${\Kloosn_\gamma(v)}/{2{q}^{\deg v/2}}$ is known to be totally real; 
moreover, its image in any complex embedding of $\Qbar$  lies in the interval~$[-1,1]$ by the Weil bound for Kloosterman sums. 
It is apparent that the size of the right-hand side of the previous display depends on how the set ${\Theta_{\gamma,a}:=\{ \Kloosn_\gamma(v)/{2{q}^{\deg v/2}}\}_{v\in P_q(a)}}$ is distributed in  $[-1,1]$.
We study the relevant aspects of the distribution of $\Theta_{\gamma, a}$ in section \ref{sec.distribution}.
Namely, we first recall from \cite{Griffon_LegAS} an effective version of an asymptotic distribution statement, 
the qualitative form of which says that, asymptotically as $a\to\infty$, the set $\Theta_{\gamma, a}$ becomes equidistributed in $[-1,1]$ with respect to the measure $\frac{2}{\pi}\sqrt{1-x^2}\dd x$.
Next we prove a quantitative version of the fact that $\Theta_{\gamma, a}$ ``avoids'' the points $-1, 0$ and $1$, which are the poles of $x\mapsto w(x)$ on $[-1,1]$. 
We then conclude the proof of the lower bound in \eqref{ieq.goal.L} by coupling these two facts about $\Theta_{\gamma, a}$ with a suitable approximation of $w$ on $[-1,1]$. 
The resulting estimates \eqref{ieq.goal.L} constitute Theorem \ref{theo.bounds.spval.L}. 
We combine these to \eqref{ieq.link.sha.L} in \S\ref{sec.large.sha} to conclude the proof of Theorem~\ref{itheo.large.sha}{\it(3)}.

Finally, in order to prove Theorem \ref{itheo.large.sha}{\it(4)}, we show that $|\sha(E_{\gamma, a})|$ is relatively prime to $p$ (see Theorem~\ref{theo.ppart.sha}).
To do so, we  start from the BSD formula \eqref{ieq.link.sha.L}: instead of estimating the ``archimedean size'' of the central value $L(E_{\gamma, a}, q^{-1})$ as we did above, we now estimate its ``$p$-adic size''.
The $p$-adic valuation of $L(E_{\gamma, a}, q^{-1})$ visibly depends on the $p$-adic valuations of the zeros of $L(E_{\gamma, a}, T)$. 
 The above mentioned Theorem \ref{theo.Lfunc.slopes} will allow us to obtain the desired result, in \S\ref{sec.ppart.sha}.

 %% -- %% -- %% -- %% -- %% -- %% -- %% -- %% -- %% -- %% -- %% -- %% -- %% -- %% -- %% -- %% -- %% -- %% -- %% -- %% -- %% -- %% -- %% -- %% -- %% -- %%     
\numberwithin{equation}{section}    

\section{Invariants of elliptic curves over function fields}
\label{sec.intro.invariants}

In this section, we introduce the relevant invariants of elliptic curves over function fields.

Let $\F_q$ be a finite field of characteristic $p$ and $K:=\F_q(t)$ denote the field of rational functions on the projective line $\P^1_{/\F_q}$. 
We assume throughout that $p\neq 2$. 
We do not mention  $K$ in the notation used for the invariants of elliptic curves over $K$ which we consider: unless explicitly noted otherwise, these invariants are relative to  $K$.

For a more detailed introduction to the arithmetic of elliptic curves over function fields and their invariants, the reader is referred to \cite{UlmerParkCity}.
Some of the results quoted in this section are valid more generally for elliptic curves (or even abelian varieties) over a function field of positive characteristic, but we only state the special cases that are required for our purpose.

\subsection{Conductor and height}
\label{sec.def.invariants}

Let $E$ be a non-isotrivial elliptic curve over $K$ (\ie{}, its $j$-invariant $j(E)$ is not a constant rational function).  
We denote by $\Delta_{\min}(E)$ its minimal discriminant divisor, which is a divisor on $\P^1_{/\F_q}$. 
The \emph{exponential differential height $H(E)$} is defined by
\[H(E) := q^{\frac{1}{12}\cdot\deg\Delta_{\min}(E)}.\]
The conductor divisor $\cond(E)$ of $E$ is also a divisor on $\P^1_{/\F_q}$ (see \cite[Chap.\ IV, \S10]{ATAEC} for the definition). 
We let 
\[N(E) := q^{\deg\cond(E)}.\]
We will call $N(E)$ the ``\emph{(numerical) conductor}'' of $E$.
It follows from an inequality between the conductor and the minimal discriminant divisors that $N(E)^{1/12}\leq H(E)$.
In the other direction,  Szpiro's inequality states that $H(E)\leq N(E)^{1/2}$, provided that $j(E)\in K$ is not a $p$-th power in $K$ (see \cite[Thm.\ 3]{GoldfeldSzpiro} and references in that article).
%Note that t
Two $K$-isogenous elliptic curves have the same numerical conductor (see \S\ref{sec.def.Lfunc}).

\subsection{The Tate--Shafarevich group}

Fix a separable closure $K^{\mathrm{sep}}$ of $K$ and consider the Galois cohomology group $\H^1(K,E):=\H^1(\Gal(K^{\mathrm{sep}}/K), E(K^{\mathrm{sep}}))$. %\[\H^1(K,E):=\H^1(\Gal(K^{\mathrm{sep}}/K), E(K^{\mathrm{sep}})).\] 
For any place $v$ of $K$, there is a similarly defined group $\H^1(K_v, E)$,
where $K_v$ denotes the completion of $K$ at $v$.
Recall that the \emph{Tate--Shafarevich group of~$E$} is defined by:
\begin{equation}\notag
    \sha(E) := \ker\left(\H^1(K,E) \longrightarrow \prod_{v}\H^1(K_v,E)\right),
\end{equation}
where the product runs over all places $v$ of $K$ and  the arrow is the product of the canonical restriction maps $\H^1(K, E)\to\H^1(K_v, E)$. 

Perhaps a more illuminating way of thinking about $\sha(E)$ is to observe that it is in bijection with the set of $K$-isomorphism classes of pairs $(C,\phi)$, where $C/K$ is a curve of genus $1$ which has at least one $K_v$-rational point for all places $v$ of $K$ and $\phi$ is a ${K}$-isomorphism between $E$ and $\mathrm{Jac}(C)$ (see \cite[Chap.\ X, \S3-4]{AEC} for more details). 
This point of view makes it clearer that $\sha(E)$ measures a local-global obstruction.

By construction, the Tate--Shafarevich group $\sha(E)$ is a torsion abelian group. 
It is conjectured, but not known in general, that $\sha(E)$ is finite (in the cases we study, we will show that it is indeed finite).

\subsection[L-functions]{The $L$-function} 
\label{sec.def.Lfunc}

For any place $v$ of $K$, we denote by $\F_v$ the residue field of $K$ at $v$ and by $\deg v=[\F_v:\F_q]$ the degree of $v$.
For such a $v$, let $(\widetilde{E})_v$ denote the reduction of $E$ at $v$. This plane cubic curve over $\F_v$ is not necessarily smooth: we say that $E$ has good reduction at $v$ if it is, and that $E$ has bad reduction at $v$ otherwise. 
In both cases, it makes sense to count $\F_v$-rational points on $(\widetilde{E})_v$.
For any place~$v$  put $a_v(E) := |\F_v| +1 - |(\widetilde{E})_v(\F_v)|$,
and define the \emph{$L$-function of $E$} by % the Euler product 
\begin{equation}\label{eq.defi.Lfunc.euler}
L(E, T) := \prod_{v}	L_v(E,T)^{-1},
\end{equation}
where the product runs over all places of $K$ and where
\begin{equation}\label{eq.defi.Lfunc}
L_v(E, T) = \begin{cases}
 1-a_v(E)\cdot T^{\deg v} +q^{d_v}T^{2\deg v} 
 & \text{if } E \text{ has good reduction at }v, \\
 1-a_v(E)\cdot T^{\deg v}
 & \text{if } E \text{ has bad reduction at }v.
 \end{cases}
\end{equation}
The Euler product in \eqref{eq.defi.Lfunc.euler} {\it a priori} converges on the complex disk $\big\{T\in\C : |T|<q^{-3/2}\big\}$, by the Hasse bound. 
However, deep results of Grothendieck and Deligne (among others) actually prove that $L(E,T)$ is a polynomial with integral coefficients in $T$, with constant coefficient $1$.
The degree $b(E)$ of that polynomial is given by the Grothendieck--Ogg--Shafarevich formula, which states that $\deg L(E, T) = \deg\cond(E) - 4$.
Moreover, the $L$-function satisfies the expected functional equation: $L(E,T) = \pm1 \cdot (qT)^{b(E)} \cdot L(E, 1/q^{2}T)$.
Lastly,  $L(E, T)$ satisfies the Riemann Hypothesis \ie{}, the complex zeros of $z\mapsto L(E, z)$ have magnitude~$q^{-1}$.

In particular, the value of $L(E,T)$ at the central point for its functional equation (\emph{viz.} $T=q^{-1}$) is a rational number, 
and the Riemann Hypothesis allows one to see that $L(E, q^{-1})$ is non-negative. 

Recall that two $K$-isogenous elliptic curves $E$ and $E'$ have the same $L$-function: see \cite[App.\ C]{Gross_bsd} for a proof.
In particular, their $L$-functions have the same degree and the Grothendieck--Ogg--Shafarevich formula implies that  $\deg\cond(E)=\deg\cond(E')$, 
so that the numerical conductors $N(E)$ and $N(E')$ coincide.

\medskip
By the results recalled in the previous paragraph, the $L$-function of $E$ can be written  as a product of the form
\[ L(E, T) = \prod_{k=1}^{b({E})}(1-z_k T ), \]
for some algebraic integers $z_k$ (which are the inverse zeros of the polynomial $L(E, T)$). 
We choose a prime ideal $\gP$ of $\Qbar$ above $p$ and  denote by  $\ord_\gP:\Qbar^\times\to\Q$ the corresponding discrete valuation, so normalised that $\ord_{\gP}(q)=1$. 
For any $k\in\{1, \dots, b(E)\}$, we let $\lambda_k =  \ord_\gP(z_k)\in\Q$. 
The functional equation of $L(E, T)$ implies that both $z_k$ and $q^2/z_k$ are algebraic integers, so that $0\leq \lambda_k\leq 2$ for all $k$. 
Renumbering the $z_k$'s if necessary, we can assume that $\lambda_1\leq \lambda_2 \leq \dots \leq \lambda_{b(E)}$. 
Thus ordered, the sequence $\{\lambda_1, \lambda_2, \dots, \lambda_{b(E)}\}$ will be called the \emph{$p$-adic slope sequence of $L(E, T)$}.
It then follows from the functional equation satisfied by $L(E, T)$ that one has  $\lambda_{b(E)-k} = 2- \lambda_k$ for any $k\in\{1, \dots, b(E)\}$.

\subsection{The BSD conjecture} 

Birch and Swinnerton-Dyer (and Tate, in this context) conjectured that the analytic behaviour of the $L$-function $z\mapsto L(E, z)$ around $z=q^{-1}$ encodes arithmetic information about~$E$. 
The reader is referred to \cite{Tate_BSD}, \cite[\S4]{GoldfeldSzpiro} or \cite{UlmerParkCity} for more detailed accounts of the BSD conjecture.

In the case that $L(E,T)$ does not vanish at $T=q^{-1}$, which is the most relevant for our purpose, their conjecture is entirely proved. 
We state the end result as follows: 

	\begin{theo}[BSD conjecture for elliptic curves of  analytic rank $0$]
	\label{conj.BSD}
	Let $E$ be non-isotrivial elliptic curve over~$K$. 
	Assume that $L(E, q^{-1})\neq 0$. 
	Then the following statements hold:
	\begin{enumerate}[(1)]
	\item 
		The Mordell--Weil group $E(K)$ is finite (\ie{}, $E(K)$ is torsion).		
	\item 
		The Tate--Shafarevich group $\sha(E)$ is finite. 		
	\item
		 Moreover, one has
		\begin{equation}\label{eq.conj.BSD}
			L(E,q^{-1}) = \frac{|\sha(E)|}{H(E)} \cdot \frac{q\cdot \tam(E)}{|E(K)|^2},
		\end{equation}
		where $\tam(E)$ denotes the Tamagawa number of $E$ (\ie{}, the product over all places $v$ of $K$ of the number of connected components of the fiber over $v$ of the N\'eron model of $E$).	
	\end{enumerate}	
	\end{theo}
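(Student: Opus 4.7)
The plan is to deduce Theorem~\ref{conj.BSD} from the BSD conjecture for elliptic curves over function fields, which in the analytic rank~$0$ case is now a theorem thanks to the successive contributions of Tate, Milne, Schneider, Bauer and, for the $p$-primary part in full generality, Kato--Trihan. Under the hypothesis $L(E, q^{-1})\neq 0$, each of the three assertions follows from combining the right existing results, so the proof is essentially a matter of assembling standard ingredients in the correct order.

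First I would establish item~\textit{(1)}. The key input is the ``easy half'' of BSD in the function field setting, due to Tate (\cf{} \cite{Tate_BSD}, or \cite[\S5]{UlmerParkCity} for an accessible account), which states that
\[ \rk E(K) \leq \ord_{T=q^{-1}}L(E, T) \]
for any non-isotrivial elliptic curve $E$ over $K$. Under the hypothesis $L(E, q^{-1})\neq 0$, the right-hand side is zero, so $\rk E(K)=0$. Combined with the Lang--N\'eron theorem, which asserts that $E(K)$ is finitely generated, this forces $E(K)$ to be finite.

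For item~\textit{(2)}, I would invoke the classical equivalence -- due to Tate for the prime-to-$p$ part, and completed by Milne, Schneider and Kato--Trihan for the $p$-primary part -- between the finiteness of $\sha(E)[\ell^\infty]$ (for any given prime~$\ell$) and the equality $\rk E(K) = \ord_{T=q^{-1}}L(E, T)$. Step~1 shows that this equality holds, hence $\sha(E)[\ell^\infty]$ is finite for every prime~$\ell$; since $\sha(E)[\ell]=0$ for all but finitely many~$\ell$, the group $\sha(E)$ is itself finite.

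Finally, for item~\textit{(3)}, I would appeal to the full BSD formula in the form proved by Kato--Trihan: once $\sha(E)$ is known to be finite, the leading Taylor coefficient of $L(E, T)$ at $T=q^{-1}$ equals the standard BSD expression involving $|\sha(E)|$, the N\'eron--Tate regulator $R$, the Tamagawa number $\tam(E)$, the torsion of $E(K)$ and the height $H(E)$. In our setting $\rk E(K)=0$, so that $R$ is an empty determinant equal to~$1$; a direct comparison between the usual normalisation and the central value $L(E, q^{-1})$ then produces the factor of $q$ appearing in~\eqref{eq.conj.BSD}. The hardest ingredient in this argument is the $p$-primary part of the BSD formula, which is considerably more delicate than the prime-to-$p$ part and relies on the syntomic/crystalline methods introduced by Kato--Trihan; everything else is a relatively formal consequence of Tate's cohomological framework.
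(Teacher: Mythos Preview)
Your argument is correct and follows the same route as the paper's sketch: Tate's inequality gives $\rk E(K)=0$, so weak BSD holds, and then the known implication ``weak BSD $\Rightarrow$ full BSD'' yields (2) and (3). The only difference is bibliographic: the paper cites only Tate and Milne \cite{Tate_BSD, Milne_conjAT} --- which already suffice for elliptic curves, since Milne's equivalence with the Artin--Tate conjecture for the minimal regular model handles the $p$-primary part --- whereas you additionally invoke Kato--Trihan, the reference needed for general abelian varieties but not strictly required here.
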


We sketch a proof of this theorem.
Write $\rho=\ord_{T=q^{-1}}L(E, T)$ for the \emph{analytic rank of~$E$} \ie{}, the multiplicity of $q^{-1}$ as a root of $L(E, T)$.
Tate \cite{Tate_BSD} has proved that $0 \leq \rk_{\Z} E(K) \leq \rho$. 
By assumption  we have $\rho=0$, so that $\rk_\Z E(K)$ vanishes too: hence the finiteness of $E(K)$. 
Moreover, we have $\rho=\rk_{\Z}E(K)$ (this equality is the so-called the ``weak BSD conjecture''). 
It is further known, by deep results of Tate and Milne (see \cite{Tate_BSD, Milne_conjAT}), that ``weak BSD'' implies the whole BSD conjecture. 
Therefore, the equality $\rho=\rk_{\Z}E(K)$  implies {\it(2)} and {\it(3)}.

\subsection[Large sha via Frobenius isogenies]{Large $\sha$ via Frobenius isogenies}
\label{sec.large.sha.Frob}

This section is inspired by \cite[\S5]{Ulmer_BSalg}:
our main purpose here is to illustrate, on a very simple example, the steps that we will later follow to prove that some elliptic curves possess a large Tate--Shafarevich group. 

Let $E$ be an elliptic curve over $K=\F_q(t)$ and,  for any integer $n\geq 1$, let $E_n$ denote the base change of $E/K$ under the $p^n$-th power Frobenius morphism $\mathrm{Fr}_{p^n}:K\to K$.
In other words, we put $E_n = E\times_{K}K$ where the underlying map $K\to K$ is $\mathrm{Fr}_{p^n}$.
The induced map $F_{p^n}:E\to E_n$ is a purely inseparable isogeny of degree $p^n$ (see \cite[p.\ 225]{UlmerParkCity}).
We obtain in this manner a sequence $(E_n)_{n\geq 1}$ of elliptic curves over $K$. 
One obviously has $j(E_n)=j(E)^{p^n}$. 
Moreover, we have $N(E_n)=N(E)$ since the curves are $K$-isogenous (see \S\ref{sec.def.invariants}). 
For the same reason, we have $L(E_n, T)=L(E,T)$ for all $n\geq 1$.
On the other hand, the proof of Theorem 5.1 in \cite{Ulmer_BSalg} shows that the height $H(E_n)$ tends to infinity as $n\to\infty$.

Now let us \emph{assume that $E$ is chosen so that $L(E, q^{-1})$ is non-zero}.
Clearly, the central value $L(E_n, q^{-1})$ is then also non-zero for any $n\geq 1$.
This implies that the full BSD conjecture holds for all the elliptic curves~$E_n$ in the sequence (see Theorem \ref{conj.BSD}).
In particular, the Tate--Shafarevich group~$\sha(E_n)$ is finite, for all $n\geq 1$.
Furthermore, the BSD formula \eqref{eq.conj.BSD} states that 
\[L(E_n, q^{-1}) 
= \frac{|\sha(E_n)|}{H(E_n)}\cdot \frac{\tam(E_n)\cdot q}{|E_n(K)\tors|^2}.\]
The growth of $\tau(E_n)$ can be estimated in terms of $H(E_n)$: \cite[Thm.\ 1.22]{HP15} or \cite[Thm.\ 1.5.4]{Griffon_PHD} yield that $\log\tam(E_n) = o\big(\log H(E_n)\big)$ as $n\to\infty$.
Further, there is a uniform bound on torsion subgroups of elliptic curves over $K$ (see Proposition 7.1 in \cite[Lect.\ 1]{UlmerParkCity} for instance): here, this shows that $|E_n(K)\tors|$ remains bounded as $n\to\infty$. 
From the last displayed identity and these two bound, we deduce that:
\[\frac{\log|\sha(E_n)|}{\log H(E_n)} = 1+ \frac{\log L(E_n, q^{-1})}{\log H(E_n)}  + o(1)\qquad(\text{as }n\to\infty).\]
Since $L(E_n, q^{-1})=L(E, q^{-1})=O(1)$, we therefore obtain that ${\log|\sha(E_n)|}\sim{\log H(E_n)}$ as $n\to\infty$. 
In other words, we have $|\sha(E_n)| = H(E_n)^{1+o(1)}$ as $n\to\infty$, so that the elliptic curves $E_n$ do have ``large Tate--shafarevich groups''.

Let us denote by $\val_p:\Q^\times\to\Q$ the $p$-adic valuation on $\Q$, normalised so that $\val_p(q)=1$.
We now evaluate the $p$-adic valuations of both sides of the BSD formula (instead of their logarithm). 
The above mentioned upper bound on $\tau(E_n)$ also yields that $\val_p(\tau(E_n)) =O\big(\val_p(H(E_n))\big)$ as $n\to\infty$.
Hence, we have
\[\frac{\val_p(|\sha(E_n)|)}{\val_p (H(E_n))} \geq 1+ \frac{\val_p( L(E_n, q^{-1}))}{\val_p (H(E_n))} + o(1) = 1 + o(1).\]
Besides, the order of $\sha(E_n)[p^\infty]$ equals $q^{\val_p(|\sha(E_n)|)}$. 
Since $\val_p (H(E_n))$ is none other than $\log H(E_n)/\log q$, we deduce from the above that $|\sha(E_n)[p^{\infty}]|\geq H(E_n)^{1+o(1)}$ as $n\to\infty$.
Therefore, the ``large $\sha$'' phenomenon for the elliptic curves in the sequence $(E_n)_{n\geq 1}$ is ``explained'' by a large $p$-primary part of $\sha$.

\medskip
The set of elliptic curves $E$ to which this construction applies is non-empty.
For instance, consider the Legendre elliptic curve $E/K$ given by $y^2=x(x-1)(x-t)$. 
This curve is non-isotrivial, and a straighforward application of Tate's algorithm shows that the conductor divisor $\cond(E)$ has degree $4$. 
The Grothendieck--Ogg--Shafarevich formula then predicts that $\deg L(E, T)=0$.
This forces the $L$-function to be trivial (\ie{}, $L(E, T)=1$); in particular, $L(E, q^{-1})$ is non-zero.
The previous paragraph shows that we have:
    \begin{prop}\label{prop.large.sha.Frob} 
    For all $n\geq 1$, consider the elliptic curve $E_n$ defined over $K$ by 
    \[E_n:\qquad y^2=x\big(x-1\big)\big(x-t^{p^n}\big).\]
    For all $n\geq 1$, the Tate--Shafarevich group $\sha(E_n)$ is finite and, as $n\to\infty$, one has   
    \[|\sha(E_n)| = H(E_n)^{1+o(1)}.\]
    \end{prop}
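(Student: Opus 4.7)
The plan is to realise the curves $E_n$ as the Frobenius base changes of the Legendre curve and to apply verbatim the construction sketched in the preceding paragraphs. Write $E: y^2 = x(x-1)(x-t)$ for the Legendre curve over $K$. Substituting $t$ by $t^{p^n}$ in its Weierstrass model shows that $E_n$ coincides with the base change $E \times_K K$ along $\mathrm{Fr}_{p^n}$, so the $p^n$-th power Frobenius yields a $K$-isogeny $E \to E_n$. In particular, $L(E_n, T) = L(E, T)$ for every $n \geq 1$.

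The only real task is then to verify that the Legendre curve satisfies the non-vanishing hypothesis of the general construction, \emph{i.e.}\ that $L(E, q^{-1}) \neq 0$. The curve $E$ is manifestly non-isotrivial. Running Tate's algorithm at the three places $t=0$, $t=1$ and $t=\infty$ of $K$ shows that $E$ has multiplicative reduction at $0$ and $1$ (each contributing $1$ to the conductor exponent) and additive reduction at $\infty$ (contributing $2$), with good reduction elsewhere, so that $\deg \cond(E) = 4$. The Grothendieck--Ogg--Shafarevich formula $\deg L(E, T) = \deg \cond(E) - 4$ then forces $L(E, T)$ to be the constant polynomial $1$, hence $L(E_n, q^{-1}) = L(E, q^{-1}) = 1 \neq 0$ for every $n \geq 1$.

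With non-vanishing in hand, Theorem \ref{conj.BSD} applies and gives that $\sha(E_n)$ is finite and satisfies the BSD formula
\begin{equation*}
1 \;=\; L(E_n, q^{-1}) \;=\; \frac{|\sha(E_n)|}{H(E_n)} \cdot \frac{q\cdot \tam(E_n)}{|E_n(K)|^2}.
\end{equation*}
Taking logarithms, dividing by $\log H(E_n)$, and invoking the three ingredients already recalled in this subsection --- namely that $H(E_n) \to \infty$ by \cite[Thm.~5.1]{Ulmer_BSalg}, that $\log \tam(E_n) = o(\log H(E_n))$ by \cite[Thm.~1.22]{HP15}, and that $|E_n(K)\tors|$ is bounded uniformly in $n$ --- one obtains $\log |\sha(E_n)| = (1+o(1))\log H(E_n)$, which is the claim.

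There is no serious obstacle in this proof: the entire content lies in the observation that the Legendre curve has trivial $L$-function, collapsing the central value to a harmless constant. The rest is mechanical application of the general machinery of this subsection. The proposition is stated explicitly to serve as a concrete unconditional warm-up, illustrating in a transparent setting the structure of the much more delicate analysis of the family $E_{\gamma, a}$ carried out in the rest of the paper.
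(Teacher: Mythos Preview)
Your proof is correct and follows essentially the same approach as the paper: identify $E_n$ as the Frobenius base change of the Legendre curve, use that its conductor has degree $4$ so that the $L$-function is identically $1$ by Grothendieck--Ogg--Shafarevich, and then feed the resulting non-vanishing into the general BSD argument of this subsection. Your version is slightly more explicit about the reduction types at $0$, $1$, and $\infty$, but the argument is otherwise the same.
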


\noindent
This proposition proves that Conjecture \ref{iconj.deW}\eqref{iconj.H} in the introduction is true. 
To prove Theorem \ref{itheo.large.sha}, we rely on a somewhat similar strategy: the various steps we follow in the rest of the paper are but more elaborate versions of the ones in the proof of Proposition \ref{prop.large.sha.Frob}.

    \begin{rema} 
     In case the characteristic $p$ of $K$ is congruent to $1$ modulo $6$, the authors of~\cite{GriffonUlmer} have very recently produced a sequence of elliptic curves over $K$ with large Tate--Shafarevich groups, the $p$-primary parts of which are trivial. 
     In contrast to the elliptic curves we study here, though, the elliptic curves forming the sequence of \cite{GriffonUlmer}, being  sextic twists of $y^2=x^3+1$ over $K$, are all isotrivial and are all $\bar{K}$-isomorphic (they have $j$-invariant $0$). 
     Theorem \ref{itheo.large.sha} is also independent of the congruence of $p$ modulo~$6$. 
      \end{rema}

\section{The elliptic curves $E_{\gamma, a}$} 
\label{sec.family}

Let $\F_q$ be a finite field of characteristic $p\geq 3$ and $K=\F_q(t)$. 
In what follows, we identify, as we may, finite places of $K$ with monic irreducible polynomials in $\F_q[t]$ and/or with closed points of $\mathbb{A}^1_{/\F_q}$. 

For any integer $a\geq 1$, we let $\wp_a(t):=t^{q^a}-t\in\F_q[t]$.

\subsection{Definition}
Let $\gamma$ be a nonzero element of $\F_q$ and $a\geq 1$ be an integer. 
We let $E_{\gamma, a}$ be the elliptic curve over $K$ given by the affine Weierstrass model
\begin{equation}\label{eq.Wmod}
E_{\gamma, a}:\qquad y^2=x^3+\wp_a(t)\cdot x^2 +\gamma x.
\end{equation}
Its $j$-invariant is easily computed to  be 
\begin{equation}\label{eq.jinv}
j(E_{\gamma, a}) = 2^8\cdot\frac{(\wp_a(t)^2-3\gamma)^3}{\gamma^2(\wp_a(t)^2-4\gamma)},
\end{equation}
and the discriminant of the model \eqref{eq.Wmod} is 
$\Delta_{\gamma, a} = 2^4\gamma^2\cdot (\wp_a(t)^2-4\gamma)$.
Note that the $j$-invariant is not constant, hence the curve $E_{\gamma, a}$ is non-isotrivial.

\subsection{Construction of $E_{\gamma, a}$}
\label{sec.construction}

The sequences $\{E_{\gamma, a}\}_{a\geq 1}$ are essentially some of the titular Artin--Schreier families of elliptic curves studied in \cite{UlmerPries} (see \S6.2 there).
We briefly give more details about how the sequences $\{E_{\gamma, a}\}_{a\geq 1}$ were constructed in {\it loc.\ cit.} and ``compare'' them to some  other similar such sequences. 

%One of the main points of \cite{UlmerPries} is to exhibit a number of families of elliptic curves over $K$ for which the BSD conjecture holds.
The basic input of the construction in \cite{UlmerPries} is a pair $(f,g)$ of rational functions $\P^1_{/\F_q}\to\P^1_{/\F_q}$ whose divisors satisfy mild conditions. 
The output is a sequence, indexed by powers $Q$ of~$q$, 
of smooth projective surfaces $X_Q$ over $\F_q$,  each equipped with a natural fibration $\pi_Q:X_Q\to\P^1$ and admitting a dominant rational map $C_{f, Q}\times C_{g,Q} \dashrightarrow X_Q$ from the product of two curves $C_{f, Q}$, $C_{g, Q}$  defined over~$\F_q$.
One of the main results of \cite{UlmerPries} is that the Jacobian variety of the generic fiber of $\pi_Q$ satisfies the BSD conjecture. %this construction produces a number of families of elliptic curves over $K$ for which the BSD conjecture holds.

Pries and Ulmer further classify (see Proposition 3.1.5 and \S4.2 in \cite{UlmerPries}) the various polar behaviours of $f$ and $g$ for which the resulting sequence $(X_Q)_{Q}$ is a sequence of elliptic surfaces (\ie{}, for which the generic fiber of $\pi_Q$ is an elliptic curve over $K$). 
This classification results in seven ``types'' of pairs $(f, g)$.

\medskip
The elliptic curve $E_{\gamma, a}$ defined by \eqref{eq.Wmod} corresponds to the case $(2, 1+1)$ of their construction. 
Specifically, one starts with $f(u)=u^2$ (one double pole at $\infty\in\P^1$) and $g(v)=v+\gamma/v$ (one simple pole at $0$ and one simple pole at $\infty$).
For any power $Q$ of $q$, one  obtains a smooth projective surface $X_Q$ over $\F_q$ which is birational to the affine surface $Y_Q$ defined in affine coordinates $(u,v,t)$ by 
\[Y_Q:\qquad  f(u) - g(v) = t^Q -t.\]
The surface $X_Q$ is equipped with a surjective morphism  $\pi_Q:X_Q\to\P^1$, which extends the natural projection $Y_Q\to\mathbb{A}^1$ given by $(u,v,t)\mapsto t$.
Writing that $Q=q^a$ for some integer $a\geq 1$, the elliptic curve $E_{\gamma, a}$ over $K$ then arises as the generic fiber of $\pi_Q$.
The model \eqref{eq.Wmod} for  $E_{\gamma, a}/K$ indeed appears naturally by putting 
the equation for the generic fiber of $\pi_Q$ into Weierstrass form.

\medskip
We wish to remark that one other case of the Pries--Ulmer construction is, for a good choice of parameters, ``isogenous'' to the case $(2, 1+1)$. 
For any $\gamma\in\F_q^\times$, consider the two rational maps $f,g:\P^1\to\P^1$ given by
$f(u) = u^2$ and $g(v) = v^2+ \gamma /v^2$.
In the terminology of \cite{UlmerPries}, this input data has type $(2, 2+2)$ since $f$ has a single double pole at $\infty$, and $g$ has double poles at $0$ and at $\infty$.
For any power $Q$ of $q$, let $X_Q$ be the smooth projective surface over $\F_q$, fibered over $\P^1$ {\it via} $\pi_Q:X_Q\to\P^1$, which is birational to the affine surface $Y_Q$ over $\F_q$ given by 
\[Y_Q : \qquad f(u)-g(v) = t^Q-t\]
and such that the morphism $\pi_Q:X_Q\to\P^1$ extending $(u,v,t)\in Y_Q\mapsto t\in\mathbb{A}^1$ is {minimal}. 
Writing  $Q=q^a$ for some integer  $a\geq 1$, we denote by $E'_{\gamma, a}/K$ the generic fiber of $\pi_Q$. 
By construction, after clearing denominators, the curve $E'_{\gamma, a}$ is given in affine coordinates $(u,v)\in\mathbb{A}^2_{/K}$ by %$u^2v^2 = v^4 +\wp_a(t)\cdot v^2 +\gamma$. 
\[E'_{\gamma, a} : \qquad u^2v^2 = v^4 +\wp_a(t)\cdot v^2 +\gamma.\]
The change of coordinates 
$(u,v)\mapsto (x,y)=\left(2v(u+v), 4v\big(uv+v^2+\wp_a(t)\big) \right)$
then provides the following affine Weierstrass model for $E'_{\gamma,a}$ over $K$:
\[E'_{\gamma, a}:\qquad y^2=\big(x+\wp_a(t)\big)\big(x^2-4\gamma\big).\]
Our main observation is:

	\begin{prop}
	For any $\gamma\in\F_q^\times$ and any integer $a\geq 1$,  the elliptic curve $E'_{\gamma, a}$ defined over $K$ by the affine Weierstrass model
	\[E'_{\gamma, a}:\qquad y^2=\big(x+\wp_a(t)\big)\big(x^2-4\gamma\big)\]
	is $2$-isogenous to $E_{\gamma, a}$ over $K$.
	\end{prop}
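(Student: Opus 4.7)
The plan is to exploit the rational $2$-torsion point $P_{0}=(0,0)$ visible on the Weierstrass model \eqref{eq.Wmod} of $E_{\gamma,a}$. Since $p\neq 2$, the classical Vélu-type formula for quotienting an elliptic curve $y^{2}=x(x^{2}+Ax+B)$ by the order-$2$ subgroup $\{\O,(0,0)\}$ (\cf{} \cite[Chap.\ III, Example 4.5]{AEC}) is available and produces a separable $K$-isogeny of degree $2$ whose target admits the Weierstrass model
\[Y^{2}=X\bigl(X^{2}-2A\,X+(A^{2}-4B)\bigr).\]

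Applying this with $A=\wp_{a}(t)$ and $B=\gamma$, I first produce the explicit $2$-isogenous curve
\[\widetilde{E}_{\gamma,a}\colon\qquad Y^{2}=X\bigl(X^{2}-2\wp_{a}(t)\,X+(\wp_{a}(t)^{2}-4\gamma)\bigr),\]
together with the isogeny $\phi\colon E_{\gamma,a}\to\widetilde{E}_{\gamma,a}$ given by the usual formula $(x,y)\mapsto\bigl(y^{2}/x^{2},\,y(\gamma-x^{2})/x^{2}\bigr)$, defined over $K$.

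To identify $\widetilde{E}_{\gamma,a}$ with $E'_{\gamma,a}$, I then perform the linear change of variables $X=X'+\wp_{a}(t)$, $Y=Y'$ over $K$. Substituting, the right-hand side factors as
\[(X'+\wp_{a}(t))\bigl((X'+\wp_{a}(t))^{2}-2\wp_{a}(t)(X'+\wp_{a}(t))+\wp_{a}(t)^{2}-4\gamma\bigr)=(X'+\wp_{a}(t))\bigl(X'^{2}-4\gamma\bigr),\]
which is precisely the defining equation of $E'_{\gamma,a}$. Composing $\phi$ with this translation yields a $K$-isogeny $E_{\gamma,a}\to E'_{\gamma,a}$ of degree $2$.

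There is no real obstacle here: the only points that require any care are that the characteristic is odd (so that the $2$-isogeny formula is valid and a linear change of $x$-coordinate is unambiguous) and that both models define genuine elliptic curves, \ie{}~have non-vanishing discriminant, which is immediate since $\Delta_{\gamma,a}=2^{4}\gamma^{2}(\wp_{a}(t)^{2}-4\gamma)\neq 0$ in $K$. The rest is a routine verification.
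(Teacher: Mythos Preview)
Your proof is correct and follows essentially the same approach as the paper: both invoke the standard $2$-isogeny formula from \cite[Chap.~III, Ex.~4.5]{AEC} for the quotient of $E_{\gamma,a}$ by $\{\O,(0,0)\}$. The only cosmetic difference is that the paper writes the composite map $(x,y)\mapsto\bigl(y^{2}/x^{2}-\wp_{a}(t),\,y(1-\gamma/x^{2})\bigr)$ directly, whereas you factor it as the usual $2$-isogeny to $\widetilde{E}_{\gamma,a}$ followed by the translation $X\mapsto X-\wp_{a}(t)$ (your $Y$-coordinate differs from the paper's by a sign, which amounts to composing with $[-1]$ and is immaterial).
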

	\begin{proof} 
	Using the formulae in \cite[Chap.\ III, Ex.\ 4.5]{AEC}, we find that the map
	\[\phi : (x,y)\in E_{\gamma, a}\mapsto 
	\left({y^2/x^{2}-\wp_a(t)}, {y\big(1-\gamma/x^{2}\big)}\right)\in E'_{\gamma, a},\]
	which is clearly defined over $K$, provides the desired $2$-isogeny.
	\end{proof}

Since they are $K$-isogenous, the elliptic curves $E_{\gamma, a}$ and $E'_{\gamma, a}$ have the same conductor, 
they share the same $L$-function, 
and their Mordell-Weil ranks are equal. 
Therefore, our main results about the elliptic curves $E_{\gamma, a}$ (such as the explicit expression for their $L$-function, and the fact that they have large Tate-Shafarevich groups) are also valid for the curves $E'_{\gamma, a}$.

Furthermore, we know by \cite[Coro.\ 3.1.4]{UlmerPries}  that, for all $\gamma\in\F_q^\times$ and $a\geq 1$, the elliptic curves $E_{\gamma, a}$ and~$E'_{\gamma, a}$ satisfy the BSD conjecture.
The calculations of \cite[\S6.2]{UlmerPries} further show that the Mordell-Weil group $E_{\gamma, a}(K)$ has rank $0$.
We will recover these two facts \emph{via} a different method, as corollaries of our explicit expression for the $L$-function of $E_{\gamma, a}$ (see Theorem \ref{theo.nonvanishing.Lfunc} and Corollary \ref{coro.BSD}).

\subsection{Bad reduction and invariants}
\label{sec.invariants}

We let $B_{\gamma, a}$ denote the set of finite places of $K$ corresponding to monic irreducible polynomials in $\F_q[t]$ which divide $\wp_a(t)^2-4\gamma$. 
Equivalently, $B_{\gamma, a}$ is the set of finite places of $K$ dividing the discriminant of the Weierstrass model \eqref{eq.Wmod} for $E_{\gamma, a}$. 

	\begin{prop}\label{prop.bad.red}
%	In the above setting,
	For any $\gamma\in\F_q^\times$ and any integer $a\geq1$, the elliptic curve $E_{\gamma, a}$ has good reduction outside $B_{\gamma, a}\cup\{\infty\}$. 
	Moreover, its places of bad reduction are as follows:
	\vspace{-6pt}\begin{center}
	\renewcommand{\arraystretch}{2.0}
	\begin{tabular}{ c  l   c  c } 
	Place $v$&   Reduction at $v$  & $\delta_{v}(E_{\gamma, a})$ & $\nu_{v}(E_{\gamma, a})$  \\
	\hline \hline
	$v\in B_{\gamma,a}$ & Multiplicative (fiber of type $\mathbf{I}_{1}) $   & $1$ & $1$  \\ \hline
	$\infty$    & Additive (fiber of type $\mathbf{I}^\ast_{4q^a}$)    &    $4q^a + 6$ & $2$  \\ \hline
	\end{tabular}
	\end{center}
	\renewcommand{\arraystretch}{1.0}  
	where, for any place $v$ of $K$, we have denoted by $\delta_v(E_{\gamma, a})$ (resp. by $\nu_v(E_{\gamma, a})$) the order at $v$ of the minimal discriminant divisor of $E_{\gamma,a}$ (resp. of its conductor divisor).
	\end{prop}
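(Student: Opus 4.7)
The plan is to identify the places of bad reduction from the discriminant of the model \eqref{eq.Wmod}, and then run Tate's algorithm at each, treating the finite places in $B_{\gamma,a}$ and the place at infinity separately.

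\textbf{Locating bad reduction.} The usual formulas give $\Delta_{\gamma,a}=2^4\gamma^2(\wp_a(t)^2-4\gamma)$. As $2\gamma$ is a unit at every finite place, the model \eqref{eq.Wmod} is smooth outside $B_{\gamma,a}$, so $E_{\gamma,a}$ has good reduction at every finite $v\notin B_{\gamma,a}$. A useful preliminary observation is that $\wp_a(t)^2-4\gamma$ is separable: since $p\mid q^a$, one has $\tfrac{d}{dt}\wp_a(t)=-1$, hence $\tfrac{d}{dt}(\wp_a^2-4\gamma)=-2\wp_a$, and $\gcd(\wp_a,\wp_a^2-4\gamma)=\gcd(\wp_a,-4\gamma)=1$ because $\gamma\in\F_q^\times$ and $p\neq 2$. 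Consequently, each $v\in B_{\gamma,a}$ divides $\Delta_{\gamma,a}$ exactly once.

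\textbf{Reduction at $v\in B_{\gamma,a}$.} Computing $c_4=16(\wp_a^2-3\gamma)$ and using that $\wp_a^2\equiv 4\gamma\pmod v$ at such a place, one gets $c_4\equiv 16\gamma\not\equiv 0\pmod v$. Combined with $v(\Delta_{\gamma,a})=1$, Tate's algorithm yields Kodaira type $\mathbf{I}_1$ (split or non-split multiplicative reduction according as $-\wp_a/2$ is a square modulo $v$ or not); the model is already minimal, and we read off $\delta_v=\nu_v=1$.

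\textbf{Reduction at $\infty$.} Set $s=1/t$ and $n=q^a$, which is odd since $p$ is. In the original model, $v_\infty(c_4)=-2n$, $v_\infty(c_6)=-3n$ and $v_\infty(\Delta_{\gamma,a})=-2n$. To pass to an integral Weierstrass model at $\infty$ I would perform the change of variables $(x,y)=(X\,s^{-(n+1)},\, Y\,s^{-3(n+1)/2})$—this relies on $n$ being odd—and clear denominators by multiplying through by $s^{3(n+1)}$; the result is
\[Y^2=X^3+(s-s^n)X^2+\gamma\, s^{2(n+1)}X,\]
whose invariants satisfy $v_\infty(c_4)=2$, $v_\infty(c_6)=3$ and $v_\infty(\Delta)=4n+6$. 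By Tate's algorithm, this pattern corresponds to Kodaira type $\mathbf{I}^{\ast}_{n'}$ with $n'=v_\infty(\Delta)-6=4q^a$; the new model is minimal at $\infty$, so $\delta_\infty=4q^a+6$. The conductor exponent of type $\mathbf{I}^\ast_m$ at a place of odd residual characteristic equals $2$ (one verifies tameness by observing that $E_{\gamma,a}$ acquires semistable reduction over the tame quadratic extension $K_\infty(\sqrt{s})$), giving $\nu_\infty=2$.

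\textbf{Main obstacle.} The delicate step is at $\infty$: one must guess the right change of variables and then confirm minimality of the resulting model from the pattern $(v_\infty(c_4),v_\infty(c_6))=(2,3)$. Parity of $n=q^a$—a consequence of $p$ being odd—is essential to keeping the substitution inside $K$. In residual characteristic $3$, a brief tameness check is also needed to conclude $\nu_\infty=2$.
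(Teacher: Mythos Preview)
Your proof is correct and follows exactly the approach the paper indicates---namely, a routine run of Tate's algorithm (the paper's own proof is a one-line reference to \cite[\S IV.9]{ATAEC})---with the details at each place spelled out carefully. Your handling of the place at $\infty$, including the parity observation $q^a$ odd to keep the substitution $K$-rational and the tameness check for $\nu_\infty=2$ in residue characteristic~$3$, is accurate and goes beyond what the paper records.
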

	\begin{proof} 
		This follows  from a routine application of Tate's algorithm to compute the type of the fibers of bad reduction  (as explained in  \cite[\S IV.9]{ATAEC}).
	\end{proof}

From the above proposition, we deduce explicit expressions for the exponential differential height $H(E_{\gamma, a})$ and the ``numerical'' conductor $N(E_{\gamma, a})$ of $E_{\gamma, a}$ (as defined in \S\ref{sec.def.invariants}):
	\begin{equation}\label{eq.invariants}
	H(E_{\gamma, a}) 
	= q^{\frac{1}{12} \deg\Delta_{\min}(E_{\gamma, a})} = q^{(q^a+1)/{2}}, \qquad \text{ and } \qquad
	N(E_{\gamma, a}) 
	=q^{\deg\cond(E_{\gamma, a})}= q^{2(q^a+1)}.  
	\end{equation}
Indeed, the polynomial $\wp_a(t)^2-4\gamma\in\F_q[t]$ being squarefree,  we see that 
\[\sum_{v\in B_{\gamma, a}} \deg v 
=\sum_{v\mid \wp_a(t)^2-4\gamma} \deg v 
= \deg\big(\wp_a(t)^2-4\gamma\big) 
= 2 q^a.\]
It is clear from \eqref{eq.invariants} that we have $H(E_{\gamma, a}) = N(E_{\gamma, a})^{1/4}$.

The \emph{Tamagawa number} $\tau(E_{\gamma,a})$ of $E_{\gamma, a}$ is the product over all places $v$ of the number of components in the special fiber of the N\'eron model of $E_{\gamma, a}$ at $v$. 
It is also readily calculated from the above proposition: with the help of the table on p.\ 365 of \cite{ATAEC}, one obtains that
	\begin{equation}\label{eq.tamagawa}
	\tam(E_{\gamma, a})=4.
	\end{equation}

	\begin{rema}\label{rema.mini.mod} 
	For the computation of the $L$-function in section \ref{sec.Lfunc}, it will be useful to have minimal integral models of $E_{\gamma, a}$ at finite places of $K$ at our disposal.
	For a finite place $v$ of $K$, by comparing the valuation at $v$ of the discriminant $\Delta_{\gamma, a}$ of the model \eqref{eq.Wmod} with the valuation at $v$ of the minimal discriminant in the above table, 
	we find that the Weierstrass model \eqref{eq.Wmod} is minimal and integral at $v$.
	\end{rema}

As an immediate corollary of this computation of the invariants of $E_{\gamma, a}$, we obtain:

	\begin{prop}\label{prop.noniso} 
	As $\gamma$ varies in $\F_q^\times$ and as $a\geq1$ varies among integers, 
	the elliptic curves $E_{\gamma, a}$ are pairwise non $\bar{K}$-isomorphic and pairwise non $K$-isogenous.	
	\end{prop}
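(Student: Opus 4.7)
The plan is to separate the curves using the invariants already computed: the numerical conductor $N(E_{\gamma,a}) = q^{2(q^a+1)}$, the set of finite bad places $B_{\gamma,a}$, and the explicit $j$-invariant \eqref{eq.jinv}. Given two pairs $(\gamma, a), (\gamma', a') \in \F_q^\times \times \N_{\geq 1}$, I would assume that the corresponding curves are either $\bar{K}$-isomorphic or $K$-isogenous, and aim to conclude $(\gamma, a)=(\gamma', a')$ in both situations. Each case is handled by reading off polar data of a well-chosen rational function in $t$.

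For the isogeny case, I invoke two facts about $K$-isogenous elliptic curves: they share the same $L$-function (hence the same numerical conductor, as recalled in \S\ref{sec.def.invariants}), and they share the same set of bad reduction places (either from equality of conductor divisors, or via the N\'eron--Ogg--Shafarevich criterion applied to the isogenous Tate modules). Equating $N(E_{\gamma,a}) = N(E_{\gamma',a'})$ using \eqref{eq.invariants} forces $a = a'$. By Proposition \ref{prop.bad.red}, the finite bad places of $E_{\gamma, a}$ are precisely the monic irreducible factors of $\wp_a(t)^2 - 4\gamma$, a squarefree monic polynomial of degree $2q^a$; equality $B_{\gamma, a} = B_{\gamma', a}$ therefore identifies $\wp_a(t)^2 - 4\gamma$ with $\wp_a(t)^2 - 4\gamma'$, forcing $\gamma = \gamma'$.

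For the isomorphism case, I compare $j$-invariants, recalling that $\bar{K}$-isomorphic elliptic curves have the same $j$-invariant as elements of $K$. By \eqref{eq.jinv}, $j(E_{\gamma,a})$ is the ratio of two polynomials in $t$ whose numerator and denominator are coprime, since $\wp_a(t)^2 - 3\gamma$ and $\wp_a(t)^2 - 4\gamma$ differ by the nonzero constant $\gamma$. Consequently $j(E_{\gamma,a})$ has a pole at $\infty$ of order exactly $6q^a - 2q^a = 4q^a$, and its finite poles are simple and supported precisely on the irreducible factors of $\wp_a(t)^2 - 4\gamma$. Matching the pole orders at $\infty$ forces $a = a'$, and matching the finite polar supports forces $\gamma = \gamma'$ by the same monic squarefree argument as above. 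I do not foresee any real obstacle here: both invariants have already been computed in \S\ref{sec.invariants}, and both are visibly fine enough to distinguish the pairs $(\gamma, a)$; the only point requiring a moment of care is to quote the correct input on $K$-isogenies, namely equality of bad reduction loci and not only of conductor degrees.
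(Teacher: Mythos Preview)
Your proposal is correct and follows essentially the same approach as the paper: both use the degree of the conductor (equivalently, the pole order of $j$ at $\infty$) to separate distinct values of $a$, and then the support of the conductor divisor (equivalently, the finite poles of $j$) to separate distinct values of $\gamma$ once $a$ is fixed. Your write-up is slightly more explicit about why the numerator and denominator of $j$ are coprime and why equality of the squarefree polynomials $\wp_a(t)^2-4\gamma$ forces $\gamma=\gamma'$, but the underlying argument is the same.
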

	\begin{proof} Two elliptic curves over $K$ are $\bar{K}$-isomorphic if and only if they have the same $j$-invariant. 
	It is obvious from \eqref{eq.jinv} that $\deg j(E_{\gamma, a})=4a$	is strictly increasing when $a\geq 1$ grows.
	Also apparent on \eqref{eq.jinv} is the fact that, for a fixed $a$, the position of the poles of $j(E_{\gamma, a})$ 	varies with $\gamma\in\F_q^\times$.
	Hence the first assertion.
	
	In a similar vein, it follows from \eqref{eq.invariants} that $\deg\cond(E_{\gamma, a})$ increases with $a\geq 1$.  
	Since two $K$-isogenous elliptic curves have equal conductor divisors, we conclude that $E_{\gamma, a}$ and $E_{\gamma, a'}$  cannot be $K$-isogenous for any $a\neq a'$.
	For a given $a\geq 1$ and distinct $\gamma, \gamma'\in\F_q^\times$, the curves $E_{\gamma, a}$ and $E_{\gamma', a}$ are not $K$-isogenous either, 
	since the sets $B_{\gamma, a}\cup\{\infty\}$ and $B_{\gamma', a}\cup\{\infty\}$ of their places of bad reduction differ (\ie{} their conductor divisors have different supports).
	\end{proof}

\subsection{Torsion subgroup}
\label{sec.torsion}

We conclude this section by elucidating the structure of the torsion subgroup of the Mordell-Weill group $E_{\gamma,a}(K)$:

    \begin{theo}
    \label{theo.tors}
    For any $\gamma\in\F_q^\times$ and any integer $a\geq 1$, 
    let $P_0\in E_{\gamma, a}(K)$ be the point with coordinates $(0,0)$ in the Weierstrass model \eqref{eq.Wmod}. 
    Then we have $E_{\gamma, a}(K)\tors =\big\{\O, P_0\big\}$. %    \begin{equation*}    E_{\gamma, a}(K)\tors =\big\{\O, P_0\big\}     \simeq \Z/2\Z.    \end{equation*}
    \end{theo}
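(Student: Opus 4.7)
I would decompose $E_{\gamma,a}(K)\tors$ according to its primary parts, handling the $2$-part, prime-to-$p$ part, and $p$-primary part in turn.

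\textit{The $2$-torsion.} The point $P_0=(0,0)$ obviously lies in $E_{\gamma,a}(K)$ and has order $2$. The remaining (geometric) $2$-torsion points have $x$-coordinate equal to a root of $g(x):=x^2+\wp_a(t)\,x+\gamma$, and these lie in $K$ iff the discriminant $\wp_a(t)^2-4\gamma$ is a square in $K$. However, $\wp_a(t)^2-4\gamma\in\F_q[t]$ is squarefree (this is exactly what was observed in the proof of Proposition~\ref{prop.bad.red} in order to compute $\sum_{v\in B_{\gamma,a}}\deg v$), so in particular it is not a square in $K$. Hence $E_{\gamma,a}(K)[2]=\{\O,P_0\}$.

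\textit{Prime-to-$p$ torsion.} By Proposition~\ref{prop.bad.red}, $E_{\gamma,a}$ has additive reduction at $v=\infty$ of Kodaira type $\mathbf I^*_{4q^a}$; since $4q^a$ is even, the component group of the N\'eron model at $\infty$ is $(\Z/2\Z)^2$. At any place $v$ of additive reduction in residue characteristic~$p$, the neutral component $E^0(K_v)$ is pro-$p$: its formal-group kernel is pro-$p$, and the smooth neutral fibre of the N\'eron model is $\mathbb{G}_{a,\F_v}$, whose $\F_v$-points are an elementary abelian $p$-group. Consequently the prime-to-$p$ torsion of $E(K_v)$, and hence of $E_{\gamma,a}(K)$, injects into the component group. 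Applied at $v=\infty$ this gives $E_{\gamma,a}(K)\tors[\text{prime to }p]\into(\Z/2\Z)^2$, and combining with the previous paragraph pins this down to be exactly $\{\O,P_0\}$.

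\textit{$p$-primary torsion.} Proving that $E_{\gamma,a}(K)[p^\infty]=0$ is the main obstacle. Since $p$ is odd, it suffices to prove $E_{\gamma,a}(K)[p]=0$. A direct inspection of the Hasse invariant of $E_{\gamma,a}$ — the coefficient of $x^{p-1}$ in $\bigl(x^3+\wp_a(t)\,x^2+\gamma x\bigr)^{(p-1)/2}$, which is readily seen to be a nonzero polynomial in $\F_q[t]$ — shows that $E_{\gamma,a}$ is ordinary over $K$; therefore $E_{\gamma,a}[p](\bar K)\cong\Z/p\Z$ carries a Galois character $\chi\colon\Gal(K^{\mathrm{sep}}/K)\to\F_p^\times$ describing the action on the \'etale quotient, and $E_{\gamma,a}(K)[p]\neq 0$ would force $\chi$ to be trivial. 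I would rule this out by exhibiting a place $v$ of good ordinary reduction at which $\chi(\mathrm{Frob}_v)\neq 1$; via the standard unit-root interpretation, this amounts to finding $v$ with $a_v(E_{\gamma,a})\not\equiv 1\pmod p$. A natural first try is any $\F_q$-rational finite place of good reduction, where the reduction is the constant curve $y^2=x(x^2+\gamma)$ over $\F_q$ and $a_v$ is an explicit integer whose residue mod $p$ one can check directly. For a uniform statement one could instead appeal to the explicit expression~\eqref{ieq.Lfunc} for $L(E_{\gamma,a},T)$ obtained later in Section~\ref{sec.Lfunc}, from which the congruence class mod $p$ of the Frobenius eigenvalues (and thereby the non-triviality of $\chi$) can be extracted in a structured way.
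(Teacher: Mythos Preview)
Your treatment of the $2$-torsion and of the prime-to-$p$ part matches the paper's proof almost exactly: the paper also computes $E_{\gamma,a}(K)[2]$ directly from the squarefreeness of $\wp_a(t)^2-4\gamma$, and then uses the embedding of the prime-to-$p$ torsion into the component group $(\Z/2\Z)^2$ at the additive place~$\infty$ (citing \cite[Lem.~7.8]{SchShio} for the same fact you spell out by hand).

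The $p$-primary part, however, is handled very differently in the paper, and your proposal has a genuine gap here. The paper's argument is a one-liner: from~\eqref{eq.jinv} one sees that $j(E_{\gamma,a})$ is not a $p$-th power in $K$, and Proposition~7.1 of \cite[Lect.~I]{UlmerParkCity} then gives $E_{\gamma,a}(K)[p^\infty]=0$ immediately. Your route --- show ordinarity, then exhibit a place $v$ of good reduction with $a_v\not\equiv 1\pmod p$ --- is sound in principle, but you do not actually produce such a $v$. Your ``first try'' at an $\F_q$-rational place amounts to checking $a_v\not\equiv 1\pmod p$ for the constant curve $y^2=x(x^2+\gamma)$ over $\F_q$, and this \emph{fails} in general: for $q=p=5$ and $\gamma=3$ one finds $|\widetilde E(\F_5)|=10$, hence $a_v=-4\equiv 1\pmod 5$ at every $\F_5$-rational place. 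Your fallback --- extracting the needed congruence from the explicit $L$-function of Section~\ref{sec.Lfunc} --- is not carried out, and it is not clear how to do so without effectively reproving the result by other means. As written, the $p$-part is therefore incomplete; the cleanest fix is to invoke the $j\notin K^p$ criterion as the paper does.
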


\noindent
We will show later on (see Corollary \ref{coro.BSD}) that the whole group $E_{\gamma,a}(K)$ is torsion, so that the above result actually provides the complete list of $K$-rational points on $E_{\gamma,a}$.

    \begin{proof}
	We see on \eqref{eq.jinv} that the $j$-invariant of $E_{\gamma, a}$ is not a $p$-th power in $K$, hence 
    Proposition 7.1 in \cite[Lect.\ I]{UlmerParkCity} ensures that $E_{\gamma, a}(K)\tors$ contains no point with $p$-th power order.

   We have shown in the previous subsection that  $E_{\gamma, a}$ has additive reduction of type $\mathbf{I}^\ast_{4q^a}$ at $\infty$. 
	By \cite[Lem.\ 7.8]{SchShio},  the prime-to-$p$ part of $E_{\gamma, a}(K)\tors$ embeds into the group $G_\infty$ of components of the special fiber at $\infty$ of the N\'eron model of $E_{\gamma, a}$. 
    Since the reduction at $\infty$ is of type $\mathbf{I}^\ast_{4q^a}$, the table in \S7.2 of \cite{SchShio} tells us that $G_\infty\simeq(\Z/2\Z)^2$.
    Hence, $E_{\gamma, a}(K)\tors$ is isomorphic to a subgroup of $(\Z/2\Z)^2$.
    In particular, the torsion subgroup of $E_{\gamma, a}(K)$ consists only of $2$-torsion points and we infer that $E_{\gamma, a}(K)\tors = E_{\gamma, a}(K)[2]$.

    The ($\bar{K}$-rational) $2$-torsion subgroup of $E_{\gamma, a}$ is readily computed: it consists of $4$ points given, in the homogenised version of \eqref{eq.Wmod}, by
    \begin{multline*} 
    	\O=[0:1:0], \quad
    	P_0 = [0:0:1], \\
        P_+ = \left[  \wp_a(t) + \sqrt{\wp_a(t) - 4\gamma}:0:-2\right],  \quad
        P_- = \left[  \wp_a(t) - \sqrt{\wp_a(t) - 4\gamma}:0:-2\right].
    \end{multline*}
    Since $\wp_a(t)-4\gamma\in\F_q[t]$ is squarefree, only the first two points are $K$-rational
    (the latter two are rational over the quadratic extension $K(\sqrt{\wp_a(t)-4\gamma})$ of $K$). 
    We deduce that $E_{\gamma, a}(K)[2] = \{\O, P_0\}$.

    By the previous paragraph, the proof is complete.
    \end{proof}

%% -- %% -- %% -- %% -- %% -- %% -- %% -- %% -- %% -- %% -- %% -- %% -- %% -- %% -- %% -- %% -- %% -- %% -- %% -- %% -- %% -- %% -- %% -- %% -- %%
\section{Preliminaries on character sums}
\label{sec.prelim.Lfunc}

In the next section (see Theorem \ref{theo.Lfunc}), we will compute an explicit expression for the $L$-function of the curves $E_{\gamma, a}$ introduced above.  
To carry out this computation, we first need to set up some notation and conventions. 
These will be in force for the rest of the paper.

\subsection{Gauss sums and Kloosterman sums}
\label{sec.gauss.kloos}

Let $\F$ be a finite field of odd characteristic $p$.
Any additive character $\psi$ on $\F$ may and will be assumed to take values in the $p$-th cyclotomic field $\Q(\zeta_p)$. 
For any finite extension $\F'/\F$, we denote the trace map by $\trace_{\F'/\F}:\F'\to\F$.
If $\psi$ is an additive character on $\F$, then the composition $\psi\circ\trace_{\F'/\F}$ is an additive character on $\F'$.
 
We denote by $\lambda:\F^\times\to\Qbar^\times$ (or $\lambda_\F$ if confusion is likely to arise) the unique nontrivial multiplicative character of order $2$ on $\F^\times$. 
We extend $\lambda$ to the whole of $\F$ by setting $\lambda(0):=0$.

    \begin{defi} 
    For any additive character $\psi$ on $\F$, define the \emph{quadratic Gauss sum} $\gauss{\F}{\psi}{\lambda}$ by 
    \begin{equation*} 
        \gauss{\F}{\psi}{\lambda} := -\sum_{x\in\F}\lambda(x)\cdot\psi(x).
    \end{equation*}
    \end{defi}

\noindent 
Note our choice of normalising $\gauss{\F}{\psi}{\lambda}$ by multiplying the sum by $-1$. 
By construction, the sum 
$\gauss{\F}{\psi}{\lambda}$ is an algebraic integer in the cyclotomic field $\Q(\zeta_p)$.
Recall the following facts about Gauss sums:
    \begin{enumerate}[(\sf {Ga}\,1)]
    \item\label{item.gauss.orbit}
        For any nontrivial additive character $\psi$ on $\F$, any $\alpha\in\F^\times$ and any finite extension $\F'/\F$, define $\psi_{\F'}^{(\alpha)}$ by $x\mapsto \psi \circ\trace_{\F'/\F}(\alpha\cdot x)$. 
        The map $\psi_{\F'}^{(\alpha)}$ is a nontrivial additive character on $\F'$ and, letting $\alpha' := \alpha^{|\F|}$, one has
        \[\gauss{\F'}{\psi_{\F'}^{(\alpha')}}{\lambda_{\F'}} = \gauss{\F'}{\psi_{\F'}^{(\alpha)}}{\lambda_{\F'}}.\]

    \item\label{item.gauss.HD} %Hasse--Davenport for Gauss sums
        For any nontrivial additive character $\psi$ on $\F$ and any finite extension $\F'/\F$, one has 
        \[\gauss{\F'}{\psi\circ\trace_{\F'/\F}}{\lambda_{\F'}} = \gauss{\F}{\psi}{\lambda_\F}^{[\F':\F]}.\]

    \item\label{item.gauss.magnitude} %Magnitude of Gauss sums
        For any nontrivial additive character $\psi$ on $\F$, one has $|\gauss{\F}{\psi}{\lambda}| = |\F|^{1/2}$ 
        in any complex embedding of $\Q(\zeta_p)$.

    \item\label{item.gauss.quad} %Explicit formula for quadratic Gauss sums
        For any nontrivial additive character $\psi$ on $\F$, the quotient $\gauss{\F}{\psi}{\lambda}/|\F|^{1/2}$ is a $4$th root of unity (which can be explicitly determined).
    \end{enumerate}
These results are quite classical, and the reader is referred to \cite[Chap.\ V, \S2]{LidlN} for proofs.

    \begin{defi} For  $\alpha\in\F$ and an additive character $\psi$ on $\F$,  define the \emph{Kloosterman sum} $\kloos{\F}{\psi}{\alpha}$ by
    \begin{equation}
        \kloos{\F}{\psi}{\alpha} := -\sum_{x\in\F^\times}\psi\left(x + \frac{\alpha}{x}\right).
    \end{equation}
    \end{defi}

\noindent
Again, we point out our choice of normalising the sum by multiplying it by $-1$.
One can show that the Kloosterman sum $\kloos{\F}{\psi}{\alpha}$ is a totally real algebraic integer in $\Q(\zeta_p)$ \ie{}, 
$\kloos{\F}{\psi}{\alpha}\in\Z[\zeta_p+\zeta_p^{-1}]$.
We remind the reader of the following facts about Kloosterman sums: 
    \begin{enumerate}[(\sf {Kl}\,1)]
    \item\label{item.kloos.salie} %Sali\'e's formula
        For any nontrivial additive character $\psi$ on $\F$ and any $\alpha\in\F^\times$, one has the identity:
        \[\kloos{\F}{\psi}{\alpha} =  - \sum_{y\in\F} \lambda(y^2-4\alpha)\cdot\psi(y).\]

    \item\label{item.kloos.orbit}
        For any nontrivial additive character $\psi$ on $\F$, any $\alpha\in\F$ and any finite extension $\F'/\F$, letting $\alpha'=\alpha^{|\F|}$, one has  
        \[\kloos{\F'}{\psi\circ\trace_{\F'/\F}}{\alpha'} = \kloos{\F'}{\psi\circ\trace_{\F'/\F}}{\alpha}.\]

    \item\label{item.kloos.HD} %Hasse--Davenport for Kloosterman sums
        For any nontrivial additive character $\psi$ on $\F$ and any $\alpha\in\F^\times$, there is a unique pair $\{\klo_\F(\psi;\alpha), \klo'_\F(\psi;\alpha)\}$ of  algebraic integers, whose product is $|\F|$ and such that,  for any finite extension $\F'/\F$, one has
        \[\kloos{\F'}{\psi\circ\trace_{\F'/\F}}{\alpha} = \klo_{\F}(\psi;\alpha)^{[\F':\F]} + \klo'_{\F}(\psi;\alpha)^{[\F':\F]}. \]

    \item\label{item.kloos.magnitude} 
        For any nontrivial additive character $\psi$ on $\F$ and any $\alpha\in\F^\times$, one has 
        $|\klo_{\F}(\psi;\alpha)| = |\klo'_{\F}(\psi;\alpha)| = |\F|^{1/2}$ in any complex embedding of $\Qbar$. 
        In particular, one has $|\kloos{\F}{\psi}{\alpha}| \leq 2\cdot |\F|^{1/2}$ % \[|\kloos{\F}{\psi}{\alpha}| \leq 2\cdot |\F|^{1/2} \]
        in any complex embedding of $\Q(\zeta_p)$.

    \end{enumerate}
These results are classical: see \cite[Chap.\ V, \S5]{LidlN} for  proofs thereof. 
For convenience, we also state here a fact that will only be proved later on 
(see Lemma \ref{lemm.kloos.slope} and Remark \ref{rema.nonvanishing.reloaded}):
    \begin{enumerate}[(\sf {Kl}\,1)]
    \setcounter{enumi}{4}
    \item\label{item.kloos.avoid}
        For any nontrivial additive character $\psi$ on $\F$ and any $\alpha\in\F^\times$, $\kloos{\F}{\psi}{\alpha}$ is a $p$-adic unit in~$\Q(\zeta_p)$. 
        
        In particular, one has $0 < |\kloos{\F}{\psi}{\alpha}|< 2\cdot |\F|^{1/2}$ % \[ 0 < |\kloos{\F}{\psi}{\alpha}|< 2\cdot |\F|^{1/2} \]
        in any complex embedding of $\Q(\zeta_p)$.
    \end{enumerate}

\subsection{Places of degree dividing $a$}
\label{sec.places.a}

Let $\F_q$ be a finite field of odd characteristic, and let $K:=\F_q(t)$ denote the rational function field over $\F_q$.

    \begin{defi}\label{defi.Pqa}
    For any integer $a\geq 1$, we denote by $P_q(a)$ the set of places $v$ of $K$ such that $v\neq0, \infty$ and $\deg v\mid a$.
    That is to say, $P_q(a)$ is the set of closed points of the multiplicative group $\mathbb{G}_{m}=\P^1\smallsetminus\{0, \infty\}$ over $\F_q$ whose degree divides $a$.
    In the usual identification between finite places of $K$ and monic irreducible polynomials in $\F_q[t]$, the set $P_q(a)$ corresponds to the set of monic irreducible polynomials $B\in\F_q[t]$ such that $B\neq t$ and $\deg B\mid a$.
    \end{defi}

The latter interpretation allows for an easy estimation of the cardinality $|P_q(a)|$. 
Indeed the Prime Number Theorem for~$\F_q[t]$ states that, for any integer $n\geq 1$, the number $\pi_q(n)$ of monic irreducible polynomials in $\F_q[t]$ of degree $n$ satisfies: 
$q^n/n - q^{n/2}\leq \pi_q(n) \leq q^n/n$ 
(see, for instance, \cite[Thm.\ 2.2]{Rosen} and its proof). 
Noting that $|P_q(a)| = -1 +\sum_{n\mid a} \pi_q(n)$,  we deduce the existence of constants $c_q, c'_q>0$, depending at most on $q$, such that
    \begin{equation}\label{eq.estimate.Pqa}
    \forall a\geq 1, \qquad     
    c'_q \cdot \frac{q^a}{a} \leq |P_q(a)|  \leq c_q \cdot \frac{q^a}{a}.
    \end{equation}

\subsection[The sums]{The sums $\gaun(v)$ and $\Kloosn_\gamma(v)$}
\label{sec.gaun.kln}

Fix a finite field $\F_q$ of odd characteristic $p$, and endow $\F_q$ with a nontrivial additive character $\psi_q$ taking values in the $p$-th cyclotomic field $\Q(\zeta_p)$.
For any finite extension~$\F/\F_q$ we ``lift'' $\psi_q$ to a nontrivial character $\psi_\F$ on $\F$ by composing $\psi_q$ with the relative trace; 
\ie{}, we let $\psi_{\F}:=\psi_q \circ \trace_{\F/\F_q}$.

    \begin{defi}\label{defi.zeros} 
    Let $\gamma\in\F_q^\times$ and $a\geq 1$ be an integer.
    For any place $v\in P_q(a)$, we denote by $\F_v$ the residue field of $K$ at $v$ and by $d_v := [\F_v:\F_q]$ the degree of $v$. 
    Viewing $v$ as the $\Gal(\bar{\F_q}/\F_q)$-orbit of an $\bar{\F_q}$-rational point of $\mathbb{G}_{m /\F_q}$, 
    we may pick an element $\beta_v\in\F_v^\times \subset \bar{\F_q}^\times$ representing that orbit~$v$. 
    The various choices  of representatives of $v$ in $\bar{\F_q}^\times$ are then $\beta_v, \beta_v^q, \beta_v^{q^2}, \dots, \beta_v^{q^{d_v-1}}$.
    The map $\psi_{\F_v}^{(\beta_v)}: x\mapsto \psi_q\circ\trace_{\F_v/\F_q}(\beta_v\cdot x)$ defines a non-trivial additive character on $\F_v$. 
    \begin{itemize}
    \item	
        We denote the Gauss sum $\gauss{\F_v}{\psi_{\F_v}^{(\beta_v)}}{\lambda_{\F_v}}$ by  $\gaun(v)$.
        A repeated application of \refGauss{item.gauss.orbit} shows that the definition of $\gaun(v)$ makes sense, 
        in that  the sum $\gauss{\F_v}{\psi_{\F_v}^{(\beta_v)}}{\lambda_{\F_v}}$ does not depend on a particular choice of representative $\beta_v$ for the  orbit $v$. 

    \item 
        We denote the Kloosterman sum $\kloos{\F_{v}}{\psi_{\F_v}^{(\beta_v)}}{\gamma}$ by $\Kloosn_\gamma(v)$, 
        and we let $\{\kln_\gamma(v), \kln'_\gamma(v)\}$ be the pair of algebraic integers attached to the Kloosterman sum $\Kloosn_\gamma(v)$ as in \refKloos{item.kloos.HD}.
        Again, these definitions make sense: repeated applications of \refKloos{item.kloos.orbit} imply that the value of  $\kloos{\F_{v}}{\psi_{\F_v}^{(\beta_v)}}{\gamma}$, and hence the pair of algebraic integers attached to it by \refKloos{item.kloos.HD}, do not depend on the choice of $\beta_v$ in $v$. 
    \end{itemize}
    \end{defi}

 %% -- %% -- %% -- %% -- %% -- %% -- %% -- %% -- %% -- %% -- %% -- %% -- %% -- %% -- %% -- %% -- %% -- %% -- %% -- %% -- %% -- %% -- %% -- %% -- %%
\section{The $L$-function of $E_{\gamma, a}$} 
\label{sec.Lfunc}

The main goal of this section is to provide an explicit expression for the $L$-function of the elliptic curve~$E_{\gamma, a}$, which will be instrumental for the proof of our main result. 
In the notation set up in the previous section, the result is as follows:

    \begin{theo}\label{theo.Lfunc} 
    Let $\F_q$ be a finite field of odd characteristic. 
    For any $\gamma\in\F_q^\times$ and any $a\geq 1$, consider the elliptic curve $E_{\gamma, a}$ over $K=\F_q(t)$ defined by \eqref{eq.Wmod}.
    The $L$-function $L(E_{\gamma, a}, T)\in\Z[T]$ of $E_{\gamma, a}$ is given by:
    \begin{equation}\label{eq.Lfunc}
    L(E_{\gamma, a}, T) = \prod_{v\in P_q(a)}\big(1-\gaun(v)\kln_\gamma(v)\cdot T^{\deg v}\big)\big(1-\gaun(v)\kln'_\gamma(v)\cdot T^{\deg v}\big),
    \end{equation}
    where $P_q(a)$ denotes the set defined in \S\ref{sec.places.a}, and 
    $\gaun(v)$, $\kln_\gamma(v)$, $\kln'_\gamma(v)$ denote the algebraic integers attached to any  $v\in P_q(a)$ in \S\ref{sec.gaun.kln}.
    \end{theo}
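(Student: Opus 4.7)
My plan is to prove \eqref{eq.Lfunc} by computing the logarithmic power-series expansion of both sides and matching coefficients. Writing $S_N := N\cdot[T^N]\log L(E_{\gamma, a}, T)$, it is enough to show that for every $N\geq 1$,
\[
S_N \;=\; -\sum_{v \in P_q(a),\, d_v \mid N} d_v\, \bigl[(\gaun(v)\kln_\gamma(v))^{N/d_v} + (\gaun(v)\kln'_\gamma(v))^{N/d_v}\bigr],
\]
since both $L(E_{\gamma, a}, T)$ and the proposed product have constant term $1$.

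\textbf{Local character sums.} At each good reduction place $v$ with reduction $\bar t \in \F_v$ of $t$, the standard point count gives $a_v(E_{\gamma, a}) = -\sum_{x \in \F_v}\lambda_{\F_v}\bigl(x(x^2 + \wp_a(\bar t)x + \gamma)\bigr)$. The key substitution $u = x + \gamma/x$ yields $x(x^2 + bx + \gamma) = x^2(u+b)$, so $\lambda_{\F_v}(f_{\bar t}(x)) = \lambda_{\F_v}(u + \wp_a(\bar t))$ on $\F_v^\times$. Tracking the fibre count $1+\lambda(u^2-4\gamma)$ of $x\mapsto x+\gamma/x$ and using $\sum_u \lambda(u+b)=0$, one obtains
\[
a_v(E_{\gamma, a}) \;=\; -\sum_{u \in \F_v}\lambda_{\F_v}\bigl(u + \wp_a(\bar t)\bigr)\,\lambda_{\F_v}(u^2 - 4\gamma).
\]
Fourier-expanding the first factor via $\lambda(z) = -G^{-1}\sum_y \lambda(y)\psi(yz)$, and recognising the resulting inner sum as a Kloosterman sum through the Salié identity \refKloos{item.kloos.salie}, one arrives at the clean closed form
\[
a_v(E_{\gamma, a}) \;=\; \frac{1}{G}\sum_{y \in \F_v^\times}\lambda_{\F_v}(y)\,\psi\bigl(y\,\wp_a(\bar t)\bigr)\,\kloos{\F_v}{\psi^{(y)}}{\gamma},
\]
with an entirely analogous expression over any finite extension of $\F_v$ for the twisted trace $a_v^{(n)}$.

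\textbf{Globalisation and orthogonality.} Expand $\log L(E_{\gamma, a}, T)$ from the Euler product and use the bijection between places $v$ with $d_v\mid N$ and $\Gal(\bar\F_q/\F_q)$-orbits in $\F_{q^N}$. A direct check shows that the multiplicative-reduction contributions at the bad places $v \in B_{\gamma, a}$ match the values of the character sum at the corresponding singular $t_0$ (while $\infty$ contributes nothing), so all places can be absorbed uniformly and $S_N = -\sum_{t_0 \in \F_{q^N}} A(t_0)$ with $A(t_0) := \sum_{x \in \F_{q^N}} \lambda_{\F_{q^N}}(f_{t_0}(x))$. Substituting the local formula and swapping the $y$- and $t_0$-sums, the additive orthogonality
\[
\sum_{t_0 \in \F_{q^N}}\psi_{\F_{q^N}}\bigl(y\,\wp_a(t_0)\bigr) \;=\; q^N\cdot\mathbf{1}\bigl[y \in \F_{q^{\gcd(a,N)}}\bigr]
\]
restricts $y$ to $\F_{q^{d^*}}^\times$ where $d^* := \gcd(a, N)$. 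Decomposing $\F_{q^{d^*}}^\times$ into $\Gal$-orbits identifies each orbit with a unique place $v \in P_q(a)$ satisfying $d_v \mid N$; the Hasse--Davenport relation \refGauss{item.gauss.HD} then collapses the ambient Gauss sum $G_N$ together with the factor $\lambda_{\F_{q^N}}(\beta_v)$ into $\gaun(v)^{N/d_v}/G_N$, while \refKloos{item.kloos.HD} turns the Kloosterman sum over $\F_{q^N}$ into $\kln_\gamma(v)^{N/d_v} + \kln'_\gamma(v)^{N/d_v}$. Assembling all the pieces yields the stated formula for $S_N$.

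\textbf{Main obstacle.} The delicate point is the sign bookkeeping: from $G_N^2 = \lambda_{\F_q}(-1)^N q^N$ the above computation naively multiplies $S_N$ by $\lambda_{\F_q}(-1)^N$, which would flip the expected identity when $\lambda_{\F_q}(-1) = -1$ and $N$ is odd. This is defused by the involution $\beta_v \mapsto -\beta_v$ on $P_q(a)$: under it $\Kloosn_\gamma(v)$ is preserved (via the symmetry $x \mapsto -x$ in the Kloosterman sum), whereas $\gaun(v) \mapsto \lambda_{\F_v}(-1)\gaun(v) = \lambda_{\F_q}(-1)^{d_v}\gaun(v)$. The condition $-\beta_v = \beta_v^{q^k}$ with $0<k<d_v$ forces $d_v \mid 2k$, which is impossible for odd $d_v$, so the involution has no fixed point at places of odd degree. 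In the offending regime ($\lambda_{\F_q}(-1)=-1$, $N$ odd), every admissible $d_v$ is odd and $N/d_v$ is also odd, so the contribution of a pair $\{v,v'\}$ acquires a factor $1+(-1)^{N/d_v}=0$; thus the spurious sign only ever multiplies coefficients that vanish identically, and \eqref{eq.Lfunc} holds unconditionally.
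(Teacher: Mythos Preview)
Your argument is correct, but it takes a genuinely different path from the paper's. The paper never Fourier-expands $\lambda$ and never encounters your sign obstacle. Instead of fixing $t$ and summing over $x$ first, the paper (Lemma~\ref{lemm.charsum.identity}) treats the full double sum
\[
S(\F,\psi,\gamma)=\sum_{z\in\F}\sum_{x\in\F}\lambda(x^3+zx^2+\gamma x)\,\psi(z)
\]
and performs the substitution $u=z+(x^2+\gamma)/x$ in the $z$-variable. This produces a clean Gauss sum in~$u$ times a clean Kloosterman sum in~$x$ (after $x\mapsto -x$), with no stray $1/G_N$ and hence no factor $\lambda_{\F_{q^N}}(-1)$ to explain away. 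After that, the paper plugs directly into the additive-character count $|\{\tau:\wp_a(\tau)=z\}|=\sum_{\beta\in\F_{q^n}\cap\F_{q^a}}\psi^{(\beta)}_{q^n}(z)$, and Hasse--Davenport finishes the job exactly as in your globalisation step.

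Your route---$u=x+\gamma/x$ at fixed $t$, then Fourier inversion $\lambda(z)=-G^{-1}\sum_y\lambda(y)\psi(yz)$, then Sali\'e---is perfectly valid, and your involution $\beta_v\mapsto-\beta_v$ is a neat way to kill the spurious sign: under it $\Kloosn_\gamma$ is invariant, $\gaun$ picks up $\lambda_{\F_q}(-1)^{d_v}$, and for odd $d_v$ dividing odd $N$ the paired contributions cancel, so the offending coefficient is already zero. One minor slip: your displayed local formula for $a_v$ is missing a global sign (the Sali\'e identity contributes an extra $-1$), but this is absorbed later and does not affect the final outcome. The trade-off is clear: the paper's substitution is shorter and sidesteps the sign issue entirely, while your approach makes the role of the Sali\'e identity \refKloos{item.kloos.salie} explicit and yields, as a by-product, the pleasant symmetry of the product \eqref{eq.Lfunc} under $v\mapsto v'$.
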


The proof of this theorem occupies the rest of the section: the next subsection proves a useful identity between character sums, and the following subsection contains the computation leading to Theorem \ref{theo.Lfunc}.

    \begin{rema}\label{rema.Lfunc}
    \begin{enumerate}[(1)]
    \item 
        Given the definition of $\kln_\gamma(v), \kln'_{\gamma}(v)$ and \refKloos{item.kloos.HD}, an equivalent way of formulating \eqref{eq.Lfunc} is:
        \begin{equation}\label{eq.Lfunc.alter}
        L(E_{\gamma, a}, T) 
        = \prod_{v\in P_q(a)}\big(1-\gaun(v)\Kloosn_\gamma(v)\cdot T^{\deg v}+\gaun(v)^2 q^{\deg v} \cdot T^{2\deg v}\big). 
        \end{equation}

    \item
        To define the sums $\gaun(v)$ and $\Kloosn_\gamma(v)$ for a place $v\in P_q(a)$, we started by fixing an additive character $\psi_q$ on $\F_q$.
        Note that the expression for the $L$-function of $E_{\gamma, a}$ obtained in Theorem \ref{theo.Lfunc} is independent of this choice.
        Indeed,  a different choice of $\psi_q$ has the sole effect of  permuting the factors in \eqref{eq.Lfunc}, for the $L$-function $L(E_{\gamma, a}, T)$ has integral coefficients.

    \item 
        By definition, we have $\sum_{v\in P_q(a)} \deg v = |\mathbb{G}_m(\F_{q^a})| = q^{a}-1$.
        Hence, as a polynomial in $T$, the $L$-function of $E_{\gamma, a}$  has degree $b(E_{\gamma, a}) = \deg L(E_{\gamma, a}, T) = 2(q^a-1)$. 
        This is compatible with the Grothendieck--Ogg--Shafarevich formula and the value of $\deg\cond (E_{\gamma, a})$ found in \eqref{eq.invariants}. 
        Note that $b(E_{\gamma, a})$ is even.
    \end{enumerate}	
    \end{rema}

\subsection{An identity between character sums}

Let $\F$ be a finite field of odd characteristic, equipped with an additive character $\psi$. 
Denote by $\lambda_\F=\lambda: \F^\times\to\{\pm1\}$ the unique quadratic character on $\F^\times$, extended by $\lambda(0) := 0$.
For any $\gamma\in\F^\times$, define the double character sum
\[S(\F, \psi, \gamma) 
:= \sum_{z\in\F}\sum_{x\in\F} \lambda(x^3 + z x^2+ \gamma x) \cdot\psi(z).\]
Note that the terms with $x=0$ do not contribute to the sum  since $\lambda(0)=0$, so that
\[S(\F, \psi, \gamma)
= \sum_{x\neq 0} \lambda(x^2) \left\{\sum_{z\in\F} \lambda\left(z + \frac{x^2+\gamma}{x}\right)\cdot\psi(z)\right\}.\]
For any $x\in\F^\times$, we put $u=z + (x^2+\gamma)/x$ in the sum displayed between brackets: we obtain that
\[\sum_{z\in\F} \lambda\left(z + \frac{x^2+\gamma}{x}\right)\cdot\psi(z)
= \psi\left(-x -\frac{\gamma}{x}\right)\cdot\sum_{u\in\F}\lambda(u)\cdot\psi(u)
= - \psi\left(-x -\frac{\gamma}{x}\right) \cdot \gauss{\F}{\psi}{\lambda}.\]
Summing this identity over all $x\in\F^\times$ then yields that
\[S(\F, \psi, \gamma)
= - \gauss{\F}{\psi}{\lambda} \cdot \sum_{x\in\F^\times} \lambda(x)^2 \cdot \psi\left(-x -\frac{\gamma}{x}\right) 
= - \gauss{\F}{\psi}{\lambda} \cdot \sum_{y\in\F^\times}  \psi\left(y +\frac{\gamma}{y}\right),\]
where we have put $y=-x$ (note that $\lambda(x)^2=1$ for all $x\neq 0$ because $\lambda$ has order $2$). 
We have thus proved:
    \begin{lemm}\label{lemm.charsum.identity}
    In the above setting, one has
    \[ S(\F, \psi, \gamma) 
    = \gauss{\F}{\psi}{\lambda} \cdot \kloos{\F}{\psi}{\gamma},\]	
    where the Gauss and Kloosterman sums are as defined in \S\ref{sec.gauss.kloos}.
    \end{lemm}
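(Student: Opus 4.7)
My plan is to evaluate $S(\F, \psi, \gamma)$ by a direct manipulation of the double sum, exploiting the factorization of the cubic in the argument of $\lambda$ and the linearity of $\psi$, so as to extract a Gauss sum on the inside and recognize a Kloosterman sum on the outside.

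First, I would discard the terms with $x=0$ (they contribute nothing since $\lambda(0)=0$) and rewrite the cubic as $x^3+zx^2+\gamma x = x^2\cdot(z + x + \gamma/x)$ for each $x\in\F^\times$. Since $\lambda$ has order $2$, one has $\lambda(x^2)=1$, so the total character value simplifies to $\lambda(z + x + \gamma/x)$. Swapping the order of summation, the inner sum becomes $\sum_{z\in\F}\lambda\bigl(z+x+\gamma/x\bigr)\,\psi(z)$, which is exactly the kind of sum one would like to transform into a Gauss sum.

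Next, for each fixed $x\in\F^\times$ I would substitute $u=z+x+\gamma/x$ in the inner sum. Using $\psi(z) = \psi(u)\,\psi(-x-\gamma/x)$, the shift factors cleanly out of the $u$-sum, and what remains is $\sum_{u\in\F}\lambda(u)\psi(u) = -\gauss{\F}{\psi}{\lambda}$ by the definition of the Gauss sum. Hence the inner sum equals $-\gauss{\F}{\psi}{\lambda}\cdot\psi(-x-\gamma/x)$, independently of how complicated the combination in the shift looks.

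Finally, pulling the Gauss sum out of the remaining sum over $x\in\F^\times$, and performing the substitution $y=-x$ (which sends $-x-\gamma/x$ to $y+\gamma/y$), the surviving sum is $\sum_{y\in\F^\times}\psi(y+\gamma/y) = -\kloos{\F}{\psi}{\gamma}$ by definition. The two minus signs combine to give the claimed identity. The only real care needed is bookkeeping of the normalizing signs in the definitions of $\gauss{\F}{\psi}{\lambda}$ and $\kloos{\F}{\psi}{\gamma}$; no further analytical input is required, so this should not pose a serious obstacle.
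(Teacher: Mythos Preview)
Your proposal is correct and follows essentially the same approach as the paper's proof: factor out $x^2$ from the cubic, shift the $z$-variable to extract a Gauss sum, then substitute $y=-x$ to recognize the Kloosterman sum. The only cosmetic difference is that the paper keeps the factor $\lambda(x^2)=\lambda(x)^2$ explicit until the final step, whereas you set it to $1$ immediately; the arguments are otherwise identical.
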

    
It is immediate to check that all three sums in the above identity vanish when $\psi$ is trivial.

\subsection{Proof of Theorem \ref{theo.Lfunc}}

Fix a parameter $\gamma\in\F_q^\times$ and an integer $a\geq 1$ as in the statement of the theorem.
Our starting point for the computation is  expression \eqref{eq.expr.Lfunc} below for the $L$-function of $E_{\gamma, a}$. 

For any $\tau\in\bar{\F_q}$, we denote by $v_\tau$ the corresponding finite place of $K$;
we pick a minimal integral Weierstrass (affine) model of $E_{\gamma, a}/K$ at $v_\tau$ of the form $y^2=f_{\tau}(x,t)$ where $f_\tau(x,t)$ is a monic cubic polynomial in $x$ with coefficients in $\F_q[t]$.
Recall that $\lambda_{\F_{q^n}} : \F_{q^n}\to\{\pm 1\}$ denotes the unique  character of exact order $2$ on $\F_{q^n}^\times$, extended by $\lambda_{\F_{q^n}}(0):=0$ to the whole of $\F_{q^n}$.
With this notation, one has the equality of formal power series:
\begin{equation}\label{eq.expr.Lfunc}
\log L(E_{\gamma, a}, T)
= - \sum_{n=1}^\infty\left(\sum_{\tau\in\F_{q^n}} \sum_{x\in\F_{q^n}} \lambda_{\F_{q^n}}\big(f_{\tau}(x, \tau)\big)\right)\cdot \frac{T^n}{n}.
\end{equation}
This expression can be derived from the definition \eqref{eq.defi.Lfunc.euler}-\eqref{eq.defi.Lfunc} of the $L$-function, just as in \cite{Griffon_LegAS} (see Lemma~4.6 and the following paragraph there) or \cite[\S4]{GriffonUlmer}.
Here, we have implicitly used the fact that~$E_{\gamma, a}$ has additive reduction at the   place $\infty$  (see Proposition \ref{prop.bad.red}) to ignore the local terms corresponding to this place.
At a place of additive reduction, the local Euler factor of $L(E_{\gamma, a}, T)$ in \eqref{eq.defi.Lfunc}    is indeed trivial.

We next aim at giving a more explicit expression of the inner double sums in \eqref{eq.expr.Lfunc}.
As was pointed out in Remark \ref{rema.mini.mod}, one can choose $f_\tau(x, t) =  x^3+ \wp_a(t)\cdot x^2 + \gamma x$ for any $\tau\in\bar{\F_q}$. 
For any integer $n\geq 1$, the inner double sum in \eqref{eq.expr.Lfunc} can thus be rewritten as
\[ \sum_{\tau\in\F_{q^n}} \sum_{x\in\F_{q^n}} \lambda_{\F_{q^n}}\big(f_{\tau}(x, \tau)\big)
= \sum_{z\in\F_{q^n}}\sum_{x\in\F_{q^n}} \big|\big\{\tau\in\F_{q^n} : \wp_a(\tau) = z \big\}\big| \cdot \lambda_{\F_{q^n}}\big(x^3 + z\cdot x^2 + \gamma x\big). \]
Moreover, we know from \cite[Lemma 4.5]{Griffon_LegAS} that, for any $z\in\F_{q^n}$, 
\[ \big|\big\{\tau\in\F_{q^n} : \wp_a(\tau) = z \big\}\big| 
= \sum_{\beta\in\F_{q^n}\cap\F_{q^a}} \psi_{q}\circ\trace_{\F_{q^n}/\F_q}(\beta\cdot x),\]
where, for all $\beta\in\F_{q^n}$, the map $z\mapsto \psi_{q}\circ\trace_{\F_{q^n}/\F_q}(\beta\cdot z)$ is an additive character on $\F_{q^n}$, which we denote by $\psi_{q^n}^{(\beta)}$.
Hence, for any integer $n\geq 1$, we have
\begin{multline*}
\sum_{\tau\in\F_{q^n}}\sum_{x\in\F_{q^n}} \lambda_{\F_{q^n}}\big(f_{\tau}(x)\big)
=\sum_{\beta\in\F_{q^n}\cap\F_{q^a}} S(\F_{q^n}, \psi_{q^n}^{(\beta)}, \gamma), \\
\text{where }
S(\F_{q^n},\psi_{q^n}^{(\beta)},\gamma)
=\sum_{z\in\F_{q^n}}\sum_{x\in\F_{q^n}} \lambda_{\F_{q^n}}(x^3 + z x^2+ \gamma x)\cdot\psi_{q^n}^{(\beta)}(z).
\end{multline*}
Lemma \ref{lemm.charsum.identity}  now yields that 
$S(\F_{q^n}, \psi_{q^n}^{(\beta)}, \gamma) = \gauss{\F_{q^n}}{\psi_{q^n}^{(\beta)}}{\lambda} \cdot \kloos{\F_{q^n}}{\psi_{q^n}^{(\beta)}}{\gamma}$ for all $\beta\in\F_{q^n}$,
where the Gauss and Kloosterman sums are as in \S\ref{sec.gauss.kloos}.
Combining the above equalities, we obtain that
\begin{equation}\label{eq.Lfunc.inter1}
	-\log L(E_{\gamma, a}, T) 
	= \sum_{n=1}^\infty\left( \sum_{\beta\in\F_{q^n}\cap\F_{q^a}} \gauss{\F_{q^n}}{\psi_{q^n}^{(\beta)}}{\lambda} \cdot \kloos{\F_{q^n}}{\psi_{q^n}^{(\beta)}}{\gamma}	\right)  \cdot\frac{T^n}{n}.
\end{equation}
When $\beta = 0$, the character $\psi_{q^n}^{(\beta)}$ is trivial and the product $\gauss{\F_{q^n}}{\psi_{q^n}^{(\beta)}}{\lambda} \cdot \kloos{\F_{q^n}}{\psi_{q^n}^{(\beta)}}{\gamma}$ vanishes.
For any $\beta\in(\F_{q^a}\cap\F_{q^n})\smallsetminus\{0\}$, denote by $v_\beta$ the place of $K$ containing $\beta$ (equivalently, $v_\beta$ is the $\Gal(\bar{\F_q}/\F_q)$-orbit of $\beta$).
The place $v_\beta$  has degree $\deg v_\beta = [\F_q(\beta):\F_q]$, which divides both $a$ and $n$. 
In particular, $v_\beta$ belongs to $P_q(a)$.

	\begin{lemm}\label{lemm.charsum.def}
	In the notation of \S\ref{sec.gaun.kln}, one has
	\[\gauss{\F_{q^n}}{\psi_{q^n}^{(\beta)}}{\lambda} 
	= \gaun(v_{\beta})^{n/\deg v_\beta} \quad \text{ and }\quad
	\kloos{\F_{q^n}}{\psi_{q^n}^{(\beta)}}{\gamma} 
	= \kln_\gamma(v_{\beta})^{n/\deg v_\beta} + \kln'_\gamma(v_{\beta})^{n/\deg v_\beta}.\]
	\end{lemm}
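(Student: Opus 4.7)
The plan is to reduce this to the Hasse--Davenport type identities recalled in \refGauss{item.gauss.HD} and \refKloos{item.kloos.HD}. Since the lemma only concerns values at an individual place $v_\beta$, everything hinges on identifying $\psi_{q^n}^{(\beta)}$ with a character on $\F_{q^n}$ that is induced from a character on the smaller field $\F_{v_\beta} = \F_q(\beta) = \F_{q^{d_v}}$, where $d_v := \deg v_\beta \mid n$.

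The key computation is the following transitivity check. By transitivity of the trace we have $\trace_{\F_{q^n}/\F_q} = \trace_{\F_{v_\beta}/\F_q}\circ\trace_{\F_{q^n}/\F_{v_\beta}}$, and since $\beta \in \F_{v_\beta}$, the element $\beta$ commutes past the inner trace. Thus, for any $z\in\F_{q^n}$,
\[\psi_{q^n}^{(\beta)}(z)
= \psi_q\circ\trace_{\F_{q^n}/\F_q}(\beta z)
= \psi_q\circ\trace_{\F_{v_\beta}/\F_q}\!\left(\beta\cdot \trace_{\F_{q^n}/\F_{v_\beta}}(z)\right)
= \psi_{\F_{v_\beta}}^{(\beta)}\circ\trace_{\F_{q^n}/\F_{v_\beta}}(z),\]
so that $\psi_{q^n}^{(\beta)} = \psi_{\F_{v_\beta}}^{(\beta)}\circ \trace_{\F_{q^n}/\F_{v_\beta}}$ as additive characters on $\F_{q^n}$. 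Since $[\F_{q^n} : \F_{v_\beta}] = n/d_v$, applying \refGauss{item.gauss.HD} to the extension $\F_{q^n}/\F_{v_\beta}$ immediately yields
\[\gauss{\F_{q^n}}{\psi_{q^n}^{(\beta)}}{\lambda_{\F_{q^n}}}
= \gauss{\F_{v_\beta}}{\psi_{\F_{v_\beta}}^{(\beta)}}{\lambda_{\F_{v_\beta}}}^{n/d_v}
= \gaun(v_\beta)^{n/d_v}.\]
For the Kloosterman sum, one notes that $\gamma\in\F_q^\times \subset \F_{v_\beta}^\times$, so \refKloos{item.kloos.HD} applies verbatim to the same tower $\F_{q^n}/\F_{v_\beta}$, producing
\[\kloos{\F_{q^n}}{\psi_{q^n}^{(\beta)}}{\gamma}
= \klo_{\F_{v_\beta}}\!\big(\psi_{\F_{v_\beta}}^{(\beta)};\gamma\big)^{n/d_v}
+ \klo'_{\F_{v_\beta}}\!\big(\psi_{\F_{v_\beta}}^{(\beta)};\gamma\big)^{n/d_v}
= \kln_\gamma(v_\beta)^{n/d_v} + \kln'_\gamma(v_\beta)^{n/d_v}.\]

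The only subtlety worth flagging is that in Definition \ref{defi.zeros} the sums $\gaun(v)$ and $\Kloosn_\gamma(v)$ were defined using an arbitrary representative $\beta_v$ of the Galois orbit $v$, and they were observed to be independent of that choice via \refGauss{item.gauss.orbit} and \refKloos{item.kloos.orbit}. Here we are free to take $\beta_v = \beta$ itself, so no compatibility issue arises. Once this identification is made the lemma is an immediate consequence of the Hasse--Davenport relations, and there is no genuine obstacle beyond unwinding the definitions.
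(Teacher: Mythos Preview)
Your proof is correct and follows essentially the same approach as the paper: both reduce the character $\psi_{q^n}^{(\beta)}$ to one induced from $\F_{q^d}$ via transitivity of the trace, then invoke the Hasse--Davenport relations \refGauss{item.gauss.HD} and \refKloos{item.kloos.HD}. The paper additionally notes explicitly that $\lambda_{\F_{q^n}} = \lambda_{\F_{q^d}}\circ\norm_{\F_{q^n}/\F_{q^d}}$, but since \refGauss{item.gauss.HD} is already stated with $\lambda_{\F'}$ on the left, your omission of this remark is harmless.
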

	
	\begin{proof}
	For brevity, we write  $d= \deg v_{\beta}$ (recall that $d$ divides $n$). 
	By multiplicativity of the trace in towers of extensions and because $\beta\in\F_{q^d}$, we have $\psi_{\F_{q^n}}^{(\beta)} = \psi_{\F_{q^d}}^{(\beta)} \circ \trace_{\F_{q^n}/\F_{q^d}}$.
	Moreover, $\lambda = \lambda_{\F_{q^n}}$ coincides with $\lambda_{\F_{q^d}}\circ\norm_{\F_{q^n}/\F_{q^d}}$.
	Hence, the Hasse--Davenport relation for Gauss sums \refGauss{item.gauss.HD} implies that
	\[\gauss{\F_{q^n}}{\psi_{q^n}^{(\beta)}}{\lambda} 
	= \gauss{\F_{q^n}}{\psi_{q^d}^{(\beta)}\circ\trace_{\F_{q^n}/\F_{q^d}}}{\lambda_{\F_{q^d}}\circ\norm_{\F_{q^n}/\F_{q^d}}}
	= \gauss{\F_{q^d}}{\psi_{q^d}^{(\beta)}}{\lambda_{\F_{q^d}}}^{[\F_{q^n}:\F_{q^d}]}.\]
	Since, by definition, we have $\gaun(v_{\beta})=\gauss{\F_{q^d}}{\psi_{q^d}^{(\beta)}}{\lambda_{\F_{q^d}}}$, the first identity is proved.
	The second identity is proved in a similar fashion, using the Hasse--Davenport relation for Kloosterman sums \refKloos{item.kloos.HD}.
	\end{proof}

%\noindent
Plugging the identities of Lemma \ref{lemm.charsum.def} into \eqref{eq.Lfunc.inter1} and exchanging the order of summation yields that
\[ - \log L(E_{\gamma, a}, T) 
= \sum_{\beta\in\F_{q^a}\smallsetminus\{0\}} \sum_{ \substack{n\geq 1\\ \deg v_\beta \mid n}}\left( \big( \gaun(v_{\beta}) \kln_\gamma(v_\beta)\big)^{n/\deg v_\beta} + \big( \gaun(v_{\beta}) \kln'_\gamma(v_\beta)\big)^{n/\deg v_\beta}\right)  \cdot\frac{T^n}{n}. \]
Upon reindexing the second sum (by setting $m = n/\deg v_\beta$), we then obtain that
\begin{align*}
- \log L(E_{\gamma, a}, T) 
&=\sum_{\beta\in\F_{q^a}^\times}  \frac{1}{\deg v_\beta} \left\{ 
\sum_{m=1}^\infty \frac{(\gaun(v_{\beta}) \kln_\gamma(v_\beta) \cdot T^{\deg v_\beta})^m}{m} + \sum_{m=1}^\infty\frac{(\gaun(v_{\beta}) \kln'_\gamma(v_\beta) \cdot T^{\deg v_\beta})^m}{m }\right\} \\
&= - \sum_{\beta\in\F_{q^a}^\times}  \frac{1}{\deg v_\beta} \cdot \log\left((1-\gaun(v_{\beta}) \kln_\gamma(v_\beta) \cdot T^{\deg v_\beta})(1-\gaun(v_{\beta}) \kln'_\gamma(v_\beta) \cdot T^{\deg v_\beta})\right). 
\end{align*}
We finally group the indices $\beta\in\F_{q^a}^\times$ corresponding to the same place $v\in P_q(a)$, and get:
\begin{align*}
\log L(E_{\gamma, a}, T) 
& = \sum_{v\in P_q(a)}  \sum_{\beta\in v} \frac{1}{\deg v} \cdot  \log\left((1-\gaun(v) \kln_\gamma(v) \cdot T^{\deg v})(1-\gaun(v) \kln'_\gamma(v) \cdot T^{\deg v})\right).  \\
& = \sum_{v\in P_q(a)}  \log\left((1-\gaun(v) \kln_\gamma(v) \cdot T^{\deg v})(1-\gaun(v) \kln'_\gamma(v) \cdot T^{\deg v})\right).  
\end{align*}
To conclude the proof of Theorem \ref{theo.Lfunc}, there only remains to exponentiate this identity. 	\hfill$\Box$

 %% -- %% -- %% -- %% -- %% -- %% -- %% -- %% -- %% -- %% -- %% -- %% -- %% -- %% -- %% -- %% -- %% -- %% -- %% -- %% -- %% -- %% -- %% -- %% -- %%
\section{Non-vanishing of $L(E_{\gamma, a}, T)$ at the central point and consequences}

As before, let $\F_q$ be a finite field of odd characteristic $p$ and $K:=\F_q(t)$.
For any parameter $\gamma\in\F_q^\times$ and any integer $a\geq 1$, we consider the elliptic curve $E_{\gamma, a}$ defined over $K$ by \eqref{eq.Wmod}.
In this section, we describe the behaviour of the $L$-function $z\mapsto L(E_{\gamma, a}, z)$ around $z=1$ (see Theorem \ref{theo.nonvanishing.Lfunc}).
We first gather some information about its $p$-adic slope sequence from Theorem \ref{theo.Lfunc} and results about Gauss and Kloosterman sums. 
We then derive an important corollary of this analytic study, namely the BSD conjecture for the elliptic curves $E_{\gamma, a}$ (see Corollary \ref{coro.BSD}).

\subsection{$p$-adic slopes of $L(E_{\gamma, a}, T)$}\label{sec.padic.slopes}

We choose, once and for all, a prime ideal $\gP$ of $\Qbar$ above $p$.
We denote by  $\ord_\gP:\Qbar^\times\to\Q$ the corresponding discrete valuation, so normalised that $\ord_{\gP}(q)=1$. 
The goal of this subsection is to compute the $p$-adic slope sequence of $L(E_{\gamma, a}, T)$ explicitly (see \S\ref{sec.def.Lfunc}).

We first prove a probably well-known lemma for which we could not find a proof in the literature:
	\begin{lemm}\label{lemm.kloos.slope} 
	Let $\F$ be a finite extension of $\F_q$,  and $\psi$ be a nontrivial additive character on $\F$. 
	For any $\alpha\in\F^\times$,  consider the Kloosterman sum $\kloos{\F}{\psi}{\alpha}$ and the pair $\{\klo_\F(\psi;\alpha), \klo'_\F(\psi;\alpha)\}$ of algebraic integers associated to it by \refKloos{item.kloos.HD}. 
	Then we have
	\begin{equation*}
		\{\ord_{\gP}\klo_\F(\psi;\alpha), \ord_{\gP}\klo'_\F(\psi;\alpha)\} 
		= \big\{ 0, [\F:\F_q] \big\}.
	\end{equation*}
	Equivalently, one has $\ord_\gP\kloos{\F}{\psi}{\alpha}=0$.
	\end{lemm}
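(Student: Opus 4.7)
The plan is to prove the equivalent reformulation first, namely that $\ord_\gP\kloos{\F}{\psi}{\alpha}=0$, and then to deduce the statement about the pair $\{\klo_\F(\psi;\alpha),\klo'_\F(\psi;\alpha)\}$ by elementary $\gP$-adic bookkeeping. The non-vanishing part of \refKloos{item.kloos.avoid} will then follow as a corollary, matching Remark \ref{rema.nonvanishing.reloaded}.

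For the congruence, the observation to exploit is that an additive character on the characteristic-$p$ field $\F$ necessarily takes values in the group $\mu_p$ of $p$-th roots of unity, so $\psi(x)\in\{\zeta_p^k : 0\leq k<p\}$ for every $x\in\F$. In $\Z[\zeta_p]$ the prime $\gP\cap\Z[\zeta_p]$ is generated by $1-\zeta_p$, so every $p$-th root of unity is congruent to $1$ modulo $\gP$. Hence, reducing the defining sum modulo $\gP$,
\[ \kloos{\F}{\psi}{\alpha} \;=\; -\sum_{x\in\F^\times}\psi\!\left(x+\tfrac{\alpha}{x}\right)
\;\equiv\; -(|\F|-1) \;\equiv\; 1 \pmod{\gP}, \]
using that $|\F|$ is a power of $p$, hence vanishes modulo $\gP$. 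This forces $\ord_\gP\kloos{\F}{\psi}{\alpha}=0$.

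To translate this into the statement about the pair, denote the two $\gP$-adic valuations by $v_1:=\ord_\gP\klo_\F(\psi;\alpha)$ and $v_2:=\ord_\gP\klo'_\F(\psi;\alpha)$. Both are non-negative since the two algebraic numbers are in fact algebraic integers by \refKloos{item.kloos.HD}. That same property \refKloos{item.kloos.HD} yields the identities
\[ \klo_\F(\psi;\alpha)\cdot\klo'_\F(\psi;\alpha)=|\F|=q^{[\F:\F_q]} \quad \text{and} \quad \klo_\F(\psi;\alpha)+\klo'_\F(\psi;\alpha)=\kloos{\F}{\psi}{\alpha}. \]
The first identity gives $v_1+v_2=[\F:\F_q]$. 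The second, combined with the previous step, gives $\ord_\gP\bigl(\klo_\F(\psi;\alpha)+\klo'_\F(\psi;\alpha)\bigr)=0$; by the ultrametric inequality, if both $v_1$ and $v_2$ were positive the sum would also have positive valuation, a contradiction. Thus $\min(v_1,v_2)=0$, and together with $v_1+v_2=[\F:\F_q]$ we conclude $\{v_1,v_2\}=\{0,[\F:\F_q]\}$.

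The only genuinely substantive step is the congruence $\kloos{\F}{\psi}{\alpha}\equiv 1 \pmod{\gP}$, whose proof ultimately reduces to the familiar fact that $1-\zeta_p$ lies above $p$; once that is in hand the rest is purely formal manipulation of valuations and the Weil factorisation supplied by \refKloos{item.kloos.HD}. No use of the Newton-above-Hodge polygon inequality, Stickelberger's theorem, or Dwork cohomology is needed, which keeps the argument self-contained.
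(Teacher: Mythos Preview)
Your proof is correct and follows essentially the same approach as the paper's own proof: first establish the congruence $\kloos{\F}{\psi}{\alpha}\equiv 1\pmod{\gP}$ by reducing each character value modulo the prime above $p$, then use the product and sum relations from \refKloos{item.kloos.HD} together with the ultrametric inequality to pin down the pair of valuations. The only cosmetic difference is that the paper phrases the last step as $\min\{v,v'\}\leq\ord_\gP\kloos{\F}{\psi}{\alpha}=0$, whereas you argue the contrapositive; the content is identical.
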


\noindent
The last statement is equivalent to the first assertion of \refKloos{item.kloos.avoid}. 
In the same setting,  the $\gP$-adic valuation of the  Gauss sum $\gauss{\F}{\psi}{\lambda}$  is easily determined: %computed: 
\refGauss{item.gauss.quad} shows that the sums $\gauss{\F}{\psi}{\lambda}$ has the same $\gP$-adic valuation as $|\F|^{1/2}$, so that 
\begin{equation}\label{eq.Gauss.slope}
\ord_{\gP} \gauss{\F}{\psi}{\lambda} 
= \frac{[\F:\F_q]}{2}.	
\end{equation}

	\begin{proof}
	By construction, $\kloos{\F}{\psi}{\alpha}$ is an element of $\Q(\zeta_p)$.
	The unique prime ideal of $\Q(\zeta_p)$ above $p$ is the principal ideal $I = (\zeta_p-1)$, so that $p\cdot\Z[\zeta_p]= I^{p-1}$.
	Since $\psi(y)$ is a power of $\zeta_p$ for all $y\in\F$, we have $\psi(y)\equiv 1 \bmod I$ and we find that 
	\[-\kloos{\F}{\psi}{\alpha} 
	\equiv \sum_{x\in\F^\times} 1 
	\equiv |\F|-1 \equiv  -1\bmod I,\]
	because $|\F|$ is a power of $p$.
	In particular, $\kloos{\F}{\psi}{\alpha}\not\equiv 0\bmod p$ in $\Qbar$, and we have ${\ord_\gP \kloos{\F}{\psi}{\alpha} = 0}$. 
	
	On the other hand, \refKloos{item.kloos.HD} implies that the algebraic integers $\klo_\F(\psi;\alpha)$ and $\klo'_\F(\psi;\alpha)$ satisfy
	\[ \klo_\F(\psi;\alpha) \cdot \klo'_\F(\psi;\alpha) = |\F|  \quad\text{ and }\quad
	\klo_\F(\psi;\alpha) + \klo'_\F(\psi;\alpha) = \kloos{\F}{\psi}{\alpha}. \]
	Hence, the pair $\{v,v'\}$ formed by their $\gP$-adic valuations satisfies: 
	$v, v'\geq 0$, 
	$v+v' = \ord_{\gP}|\F| = [\F:\F_q]$, and 
	$\min\{v,v'\}\leq \ord_\gP \kloos{\F}{\psi}{\alpha} = 0$.
	The only  pair $\{v,v'\}$ fulfilling these requirements is $\{0, [\F:\F_q]\}$. Hence the result.
	\end{proof}

We can now state and prove the following:

	\begin{theo}\label{theo.Lfunc.slopes}
	Let $\gamma \in\F_q^\times$ and $a\geq 1$. 
	Consider the elliptic curve $E_{\gamma, a}/K$ defined by \eqref{eq.Wmod}, and write $b=\deg L(E_{\gamma, a}, T)$.
		Recall from Remark \ref{rema.Lfunc}(3) that $b=2(q^a-1)$ is even.
	The $L$-function $L(E_{\gamma, a}, T)$ has $p$-adic slope sequence
	\[ \lambda_1=\frac 1 2, \ 
	\lambda_2=\frac 1 2, \ \dots, \ 
	\lambda_{b/2}=\frac 1 2,\ 
	\lambda_{b/2+1}=\frac 3 2,\ 
	\lambda_{b/2+2}=\frac 3 2,\ \dots,\ 
	\lambda_b=\frac 3 2.\]
	\end{theo}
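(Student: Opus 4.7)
The starting point is the explicit factorization of $L(E_{\gamma,a},T)$ provided by Theorem \ref{theo.Lfunc}, which expresses it as a product over $v\in P_q(a)$ of factors
\[ \bigl(1-\gaun(v)\kln_\gamma(v)\cdot T^{d_v}\bigr)\bigl(1-\gaun(v)\kln'_\gamma(v)\cdot T^{d_v}\bigr), \]
where $d_v:=\deg v$. The plan is to compute the $\gP$-adic valuations of the inverse roots of each factor, then sum multiplicities over $v\in P_q(a)$.

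First, I would observe that if $\alpha\in\Qbar^\times$ and $d\geq 1$, then factoring $1-\alpha T^d=\prod_{i=0}^{d-1}(1-z_i T)$ over $\Qbar$ yields $d$ inverse roots $z_i$ each satisfying $z_i^d=\alpha$, so $\ord_\gP(z_i)=\ord_\gP(\alpha)/d$. Applied to the two factors attached to $v$, this means the $2d_v$ inverse roots contributed by $v$ have $\gP$-adic valuations
\[ \frac{\ord_\gP\bigl(\gaun(v)\kln_\gamma(v)\bigr)}{d_v}\quad\text{(with multiplicity $d_v$)} \quad\text{and}\quad \frac{\ord_\gP\bigl(\gaun(v)\kln'_\gamma(v)\bigr)}{d_v}\quad\text{(with multiplicity $d_v$)}. \]

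Next, I would invoke the two valuation computations already in hand: equation \eqref{eq.Gauss.slope} gives $\ord_\gP\gaun(v)=d_v/2$, while Lemma \ref{lemm.kloos.slope} gives $\{\ord_\gP\kln_\gamma(v),\ord_\gP\kln'_\gamma(v)\}=\{0,d_v\}$. Up to swapping $\kln_\gamma(v)$ and $\kln'_\gamma(v)$ (which is immaterial since they appear symmetrically in the two factors), the two numerators above are $d_v/2$ and $3d_v/2$, so dividing by $d_v$ yields the valuations $1/2$ and $3/2$, each with multiplicity $d_v$.

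Finally, I would total the multiplicities over $v\in P_q(a)$. By the identification of $P_q(a)$ with $\mathbb{G}_m(\F_{q^a})/\Gal(\bar\F_q/\F_q)$, we have $\sum_{v\in P_q(a)}d_v=|\mathbb{G}_m(\F_{q^a})|=q^a-1=b/2$. Hence the slope $1/2$ and the slope $3/2$ each occur with total multiplicity $b/2$; after reordering them in non-decreasing order, this is the announced $p$-adic slope sequence. No step is really an obstacle here: the only non-routine input is Lemma \ref{lemm.kloos.slope}, which has just been proved, and the bookkeeping of $d_v$-fold multiplicities in each local factor is straightforward. The functional-equation symmetry $\lambda_{b-k}=2-\lambda_k$ serves as a useful consistency check on the result $\{1/2,3/2\}$.
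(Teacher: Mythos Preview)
Your proof is correct and follows essentially the same route as the paper: both start from the factorization of Theorem~\ref{theo.Lfunc}, compute $\ord_\gP\gaun(v)=d_v/2$ via \eqref{eq.Gauss.slope} and $\{\ord_\gP\kln_\gamma(v),\ord_\gP\kln'_\gamma(v)\}=\{0,d_v\}$ via Lemma~\ref{lemm.kloos.slope}, and conclude that every slope lies in $\{1/2,3/2\}$. The only minor difference is in the final counting: you directly track that each $v$ contributes $d_v$ slopes equal to $1/2$ and $d_v$ equal to $3/2$, then sum $\sum_v d_v=q^a-1=b/2$, whereas the paper instead invokes the functional-equation symmetry $\lambda_{b-k}=2-\lambda_k$ to deduce the even split; your bookkeeping is slightly more self-contained.
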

	
	\begin{proof} 
	By definition (see \S\ref{sec.def.Lfunc}), the $p$-adic slope sequence of $L(E_{\gamma, a}, T)$ is the (suitably indexed) multiset of $\gP$-adic valuations of the inverses of zeros of $z\mapsto L(E_{\gamma, a}, z)$.
	
	Upon staring at the expression for $L(E_{\gamma, a}, T)$ obtained in Theorem \ref{theo.Lfunc}, one immediately sees that an algebraic number $z\in\Qbar$ is the inverse of a zero of $L(E_{\gamma, a}, T)$  
	if and only if there exists a place $v\in P_q(a)$ with $z^{\deg v} \in \{\gaun(v)\kln_\gamma(v), \gaun(v)\kln'_\gamma(v) \}$.
	In particular, if $z$ is the inverse of a zero of $L(E_{\gamma, a}, T)$, there exists $v\in P_q(a)$ such that $\ord_{\gP} (z^{\deg v})$ equals one of $\ord_\gP(\gaun(v)\kln_\gamma(v) )$ or $\ord_\gP(\gaun(v)\kln'_\gamma(v))$.
	
	Applying \eqref{eq.Gauss.slope} in the case where $\gauss{\F}{\psi}{\lambda}=\gaun(v)$, we  see that $\ord_{\gP} \gaun(v) = (\deg v)/2$.
	Besides, the previous lemma applied to $\kloos{\F}{\psi}{\alpha} = \Kloosn_\gamma(v)$ yields that 
	$\big\{\ord_\gP\kln_\gamma(v), \ord_\gP \kln'_\gamma(v)  \big\} 	= \{0, \deg v\}$.
	We therefore have
	\begin{equation*}
	\ord_\gP z  
	= \frac{\ord_{\gP}(z^{\deg v})}{\deg v } 
	\in \left\{ \frac{\ord_{\gP} \gaun(v) + \ord_{\gP} \kln_\gamma(v)}{\deg v}, \frac{\ord_{\gP} \gaun(v) + \ord_{\gP} \kln'_\gamma(v)}{\deg v}\right\} %= \left\{ \frac{1}{2} +0, \frac12 +1\right\} 
	= \left\{ \frac 12, \frac 32\right\}.
	\end{equation*}
	
	As was recalled in \S\ref{sec.def.Lfunc}, the $p$-adic slope sequence $\{\lambda_k\}_{k=1}^{b}$ of  $L(E_{\gamma, a}, T)$ admits the following symmetry: for a given $s\in[0,2]$, there are as many indices $k$ such that $\lambda_k=s$ as there are indices $k'$ such that $\lambda_{k'}=2-s$.
	From this and from the above display, we immediately conclude that, among the $b$ slopes of the $L$-function~$L(E_{\gamma, a}, T)$, half of them equal $1/2$ and the other half equal $3/2$.
	The result now follows upon choosing a suitable numbering of these slopes. 
	\end{proof}

\subsection{Non-vanishing at the central point}
The following follows almost immediately from Theorem~\ref{theo.Lfunc.slopes}:

	\begin{theo}\label{theo.nonvanishing.Lfunc} 
	For any $\gamma\in\F_q^\times$ and any $a\geq 1$, the $L$-function $L(E_{\gamma, a}, T)$ does not vanish at $T=q^{-1}$. %one has  $L(E_{\gamma, a}, q^{-1}) \neq 0$. 
	In other words, one has $\ord_{T=q^{-1}}L(E_{\gamma,a}, T)=0$.
	\end{theo}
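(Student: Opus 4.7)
The plan is to exploit the explicit $p$-adic slope sequence computed in Theorem \ref{theo.Lfunc.slopes} and contrast it with what a hypothetical vanishing at the central point would force.

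Write the $L$-function as $L(E_{\gamma,a}, T) = \prod_{k=1}^{b}(1 - z_k T)$ where $b = 2(q^a-1)$ and $z_1, \dots, z_b$ are the inverses of the zeros. Suppose for contradiction that $L(E_{\gamma,a}, q^{-1}) = 0$. Then at least one factor must vanish at $T = q^{-1}$, meaning there exists some index $k$ with $z_k = q$. Applying the chosen $\gP$-adic valuation, normalized so that $\ord_\gP(q) = 1$, this would yield $\lambda_k = \ord_\gP(z_k) = 1$.

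However, Theorem \ref{theo.Lfunc.slopes} asserts that every element of the $p$-adic slope sequence of $L(E_{\gamma,a}, T)$ equals either $1/2$ or $3/2$. In particular, no slope can equal $1$, so no inverse zero $z_k$ can have $\gP$-adic valuation $1$, and in particular none can equal $q$. This contradicts the hypothesis, and we conclude that $L(E_{\gamma,a}, q^{-1}) \neq 0$, i.e., $\ord_{T=q^{-1}} L(E_{\gamma,a}, T) = 0$.

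The argument is essentially immediate from the preceding slope computation, so there is no real obstacle here; the work was already done in establishing Theorem \ref{theo.Lfunc.slopes} (which itself rested on Lemma \ref{lemm.kloos.slope} on the $\gP$-adic unit property of Kloosterman sums together with the known valuation of quadratic Gauss sums). One might note that the same reasoning shows more generally that $L(E_{\gamma,a}, T)$ cannot vanish at any algebraic number whose $\gP$-adic valuation is an integer, but only the central value is needed for what follows.
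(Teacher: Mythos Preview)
Your proof is correct and takes essentially the same approach as the paper: both deduce the non-vanishing from the fact that the $\gP$-adic valuations of the inverse zeros lie in $\{1/2, 3/2\}$ and hence cannot equal $\ord_\gP(q)=1$. The only cosmetic difference is that the paper argues directly with the factorisation over places $v\in P_q(a)$ from Theorem~\ref{theo.Lfunc}, whereas you invoke the packaged slope statement of Theorem~\ref{theo.Lfunc.slopes}; the underlying computation is identical.
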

	\begin{proof} 
	Given the expression for $L(E_{\gamma, a}, T)$ of Theorem \ref{theo.Lfunc}, it suffices to show that, for any $v\in P_q(a)$, the two factors ${1-\gaun(v)\kln_\gamma(v) q^{-\deg v}}$ and ${1-\gaun(v)\kln'_\gamma(v) q^{-\deg v}}$ of $L(E_{\gamma, a}, q^{-1})$ are nonzero.
	As we have seen in the previous subsection, for all $v\in P_q(a)$, we have
	\[\big\{\ord_\gP(\gaun(v)\kln_\gamma(v) ), \ord_\gP (\gaun(v)\kln'_\gamma(v) )\big\} 
	= \left\{ \frac{\deg v}{2}, \frac{3 \deg v}{2}\right\}.\]
	Since $\ord_\gP(q^{\deg v}) = \deg v$, neither of $\gaun(v)\kln_\gamma(v)$ or $\gaun(v)\kln'_\gamma(v)$  can equal $q^{\deg v}$, and we are done.
	\end{proof}

Remark \ref{rema.nonvanishing.reloaded} outlines an alternative proof of Theorem \ref{theo.nonvanishing.Lfunc}: there, we obtain the non-vanishing by  relying on  the ``angular distribution'' of the Gauss and Kloosterman sums.

\subsection{The BSD conjecture for $E_{\gamma, a}$}

Given the non-vanishing of $L(E_{\gamma, a}, T)$ at $T=q^{-1}$, we deduce from the BSD result (see Theorem \ref{conj.BSD}) that  the BSD conjecture holds for all the elliptic curves $E_{\gamma, a}$. 
	\begin{coro}\label{coro.BSD}
	For all $\gamma\in\F_q^\times$ and all integers $a\geq 1$, one has:
	\begin{enumerate}[(1)]
		\item\label{coro.rk.0}
		The Mordell--Weil group $E_{\gamma, a}(K)$ consists of the  two points 
		$\O$ and $P_0=(0,0)$.
		\item\label{coro.finite.sha} 
		The Tate--Shafarevich group $\sha(E_{\gamma, a})$ is finite.
		\item 
		The following identity holds:
		\begin{equation}\label{eq.BSD.2}
			L(E_{\gamma, a}, q^{-1}) = \frac{|\sha(E_{\gamma, a})|}{q^{-1}\cdot H(E_{\gamma, a})}.
		\end{equation}
	\end{enumerate}
	\end{coro}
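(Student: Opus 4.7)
The plan is to deduce all three items directly from the non-vanishing result just proved (Theorem \ref{theo.nonvanishing.Lfunc}) combined with Theorem \ref{conj.BSD} and the preparatory computations of section \ref{sec.family}.

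First, since $L(E_{\gamma,a}, q^{-1})\neq 0$ by Theorem \ref{theo.nonvanishing.Lfunc}, the elliptic curve $E_{\gamma,a}$ falls within the scope of Theorem \ref{conj.BSD}. Applied to $E=E_{\gamma,a}$, that theorem immediately yields statement \eqref{coro.finite.sha}, namely the finiteness of $\sha(E_{\gamma,a})$, and also the finiteness of the Mordell--Weil group $E_{\gamma,a}(K)$. Because $E_{\gamma,a}(K)$ is finite, it coincides with its torsion subgroup, which has been computed in Theorem \ref{theo.tors} to be $\{\O, P_0\}$. This proves item \eqref{coro.rk.0}.

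For item \textit{(3)}, I would simply specialise the BSD formula \eqref{eq.conj.BSD} provided by Theorem \ref{conj.BSD}, using the explicit values of the quantities appearing on its right-hand side: by item \eqref{coro.rk.0} above, $|E_{\gamma,a}(K)|=2$, while \eqref{eq.tamagawa} gives $\tau(E_{\gamma,a})=4$. Substituting these into \eqref{eq.conj.BSD} yields
\[
L(E_{\gamma,a}, q^{-1})
= \frac{|\sha(E_{\gamma,a})|}{H(E_{\gamma,a})}\cdot \frac{q\cdot 4}{2^2}
= \frac{|\sha(E_{\gamma,a})|}{q^{-1}\cdot H(E_{\gamma,a})},
\]
which is exactly \eqref{eq.BSD.2}.

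There is no real obstacle here: all the hard work has already been done, in that Theorem \ref{theo.nonvanishing.Lfunc} (itself a consequence of the slope computation in Theorem \ref{theo.Lfunc.slopes}, built on the explicit product formula of Theorem \ref{theo.Lfunc}) is what unlocks the full BSD conjecture for these curves. The corollary is essentially a bookkeeping exercise: gather the non-vanishing, invoke the BSD theorem, and plug in $|E_{\gamma,a}(K)|=2$ and $\tau(E_{\gamma,a})=4$.
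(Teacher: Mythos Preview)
Your proof is correct and follows essentially the same approach as the paper: invoke Theorem \ref{theo.nonvanishing.Lfunc} to place $E_{\gamma,a}$ in the scope of Theorem \ref{conj.BSD}, read off finiteness of $E_{\gamma,a}(K)$ and $\sha(E_{\gamma,a})$, use Theorem \ref{theo.tors} to identify the Mordell--Weil group, and substitute $|E_{\gamma,a}(K)|=2$ and $\tam(E_{\gamma,a})=4$ into \eqref{eq.conj.BSD}.
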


That the elliptic curves $E_{\gamma, a}$ satisfy the BSD conjecture is not new: as was already mentioned, it was first proved by Pries and Ulmer in \cite{UlmerPries}, where the proof relies on the geometry of the minimal regular model of $E_{\gamma, a}$. 
Our proof appears to be largely independent of theirs. 
Pries and Ulmer have further shown (also by a geometric argument) that $E_{\gamma, a}(K)$ has rank $0$ (see \S6.2 in \cite{UlmerPries}).

	\begin{proof} 
	By Theorem \ref{conj.BSD}{\it(1)}, the Mordell--Weil group $E_{\gamma, a}(K)$ is finite, hence coincides with its torsion subgroup.
	Theorem \ref{theo.tors} then yields the first assertion.
	The second assertion is copied verbatim from the second item of Theorem \ref{conj.BSD}.
	As for identity \eqref{eq.BSD.2}, it suffices to plug the values $|E_{\gamma, a}(K)|=2$ and $\tam(E_{\gamma, a})=4$ (calculated in \S\ref{sec.invariants}) into the BSD formula \eqref{eq.conj.BSD}.
	\end{proof}

 %% -- %% -- %% -- %% -- %% -- %% -- %% -- %% -- %% -- %% -- %% -- %% -- %% -- %% -- %% -- %% -- %% -- %% -- %% -- %% -- %% -- %% -- %% -- %% -- %% -- %%
\section[Distribution of angles]{Distribution of the sums $\Kloosn_\gamma(v)$}
\label{sec.distribution}

In this section, we work in the following setting: 
we fix a finite field $\F_q$ of odd characteristic $p$, endowed with a nontrivial additive character $\psi_q$; we also choose, once and for all, an embedding $\iota:\Qbar\into\C$.
For any $\gamma\in\F_q^\times$ and any $a\geq 1$, we consider the elliptic curves $E_{\gamma, a}$ defined by \eqref{eq.Wmod}. 

In the next section, we will describe the asymptotic behaviour of $L(E_{\gamma, a}, q^{-1})$ as $a\to\infty$. 
Doing so will require some knowledge about the ``angular distribution'' in the complex plane of (the images under~$\iota$ of) the algebraic integers $\kln_\gamma(v), \kln'_\gamma(v)$, for $v\in P_q(a)$.  
We therefore lay out, in this section, the relevant facts about that distribution.

\subsection{Angles of the sums}\label{sec.angles}

For a place $v\in P_q(a)$, the images under $\iota$ of the algebraic integers $\gaun(v)$, $\kln_\gamma(v)$ and $\kln'_\gamma(v)$ lie on the complex circle 
$\{z\in\C : |z| = q^{\deg v/2}\}$ by the Weil bounds \refGauss{item.gauss.magnitude} and \refKloos{item.kloos.magnitude} for Gauss and Kloosterman sums.
We can therefore introduce the following angles:

	\begin{defi}\label{defi.angles}
	For a place $v\in P_q(a)$, 
	\begin{itemize}
	\item 
	We know from \refGauss{item.gauss.quad} that the algebraic number $\gaun(v)/q^{\deg v/2}$ is a $4$th root of unity. 
	Thus, there is a unique $\gauan(v)\in\{0, \pi/2, \pi, 3\pi/2\}$ such that
	\[\iota(\gaun(v)) 
	= q^{\deg v/2}\cdot \e^{i\gauan(v)}.\]

	\item 
	By 	\refKloos{item.kloos.magnitude}, the complex number 	$\iota(\Kloosn_\gamma(v))$ is  real with $|\iota(\Kloosn_\gamma(v))|\leq 2q^{\deg v/2}$.
	Hence there exists a unique angle $\kloan_\gamma(v)\in[0,\pi]$ such that 
	\[\iota(\Kloosn_\gamma(v)) 
	= 2q^{\deg v/2}\cdot\cos\kloan_\gamma(v).\]
	\end{itemize}
	\end{defi}

For all $v\in P_q(a)$, we remark that one has 
$\big\{\iota(\kln_\gamma(v)), \iota(\kln'_\gamma(v)) \big\} 
= \left\{q^{\deg v/2} \cdot \e^{i\kloan_\gamma(v)}, q^{\deg v/2} \cdot \e^{- i\kloan_\gamma(v)}\right\}$.
With this new notation, the expression for $L(E_{\gamma, a}, T)$ obtained in Theorem \ref{theo.Lfunc} becomes:
\begin{equation}\label{eq.Lfunc.alter2}
L(E_{\gamma, a}, T) 
= \prod_{v\in P_q(a)} \left(1 - \e^{i(\gauan(v) + \kloan_\gamma(v))}\cdot(qT)^{\deg v}\right)\left(1 - \e^{i(\gauan(v) - \kloan_\gamma(v))}\cdot(qT)^{\deg v}\right).	
\end{equation}
Note that, even though the angles $\kloan_\gamma(v)$ individually depend on the choice of $\iota$, the set $\{\kloan_\gamma(v)\}_{v\in P_q(a)}$ does not: 
a different choice of $\iota$ only permutes the various $\kloan_\gamma(v)$, since $L(E_{\gamma, a}, T)$ has integral coefficients.

Evaluating both sides of the above equality at $T=q^{-1}$ yields
\begin{equation}\label{eq.spval.expr}
L(E_{\gamma, a}, q^{-1}) 
= \prod_{v\in P_q(a)} \left(1 + \e^{2i\gauan(v)} -2 \e^{i\gauan(v)}\cdot \cos\kloan_\gamma(v)\right).
\end{equation}
%Combined with the BSD formula \eqref{eq.BSD.2} and \eqref{eq.invariants}, this yields a closed expression for the order of~$\sha(E_{\gamma, a})$:
%\[|\sha(E_{\gamma, a})| = \prod_{v\in P_q(a)} q^{\deg v/2}\left(1 + \e^{2i\gauan(v)} -2 \e^{i\gauan(v)}\cdot \cos\kloan_\gamma(v)\right).\]	

	\begin{rema}\label{rema.nonvanishing.reloaded}
	\begin{enumerate}[(1)]
	\item 
		For any $v\in P_q(a)$, Lemma \ref{lemm.kloos.slope} implies that the angle $\kloan_\gamma(v)$ does not lie in $\{0, \pi/2, \pi\}$.
		Indeed, were $\kloan_\gamma(v)$ to hit one of $0, \pi/2$ or $\pi$, the sum $\Kloosn_\gamma(v)$ would equal $+2q^{\deg v/2}, 0$ or $-2q^{\deg v/2}$, respectively. 
		This is incompatible with Lemma \ref{lemm.kloos.slope} since $\Kloosn_\gamma(v)$ would then not be a $p$-adic unit. 
	
		More generally, with the same construction as in Definition \ref{defi.angles}, one can associate an angle $\theta_{\F, \psi, \alpha}\in[0, \pi]$ to any of the Kloosterman sums $\kloos{\F}{\psi}{\alpha}$ introduced in \S\ref{sec.gauss.kloos}.
		A similar argument as the above  then shows that $\theta_{\F, \psi, \alpha}\notin \{0, \pi/2, \pi\}$. 
		This directly implies the second assertion in \refKloos{item.kloos.avoid}.
	
	\item
		The previous item actually provides a second proof of the non-vanishing of the $L$-function $L(E_{\gamma, a}, T)$ at~$T=q^{-1}$, as follows.
		It is clear that, for a place $v\in P_q(a)$, the factor indexed by $v$ in the product~\eqref{eq.Lfunc.alter2} vanishes at $T=q^{-1}$ if and only if $\gauan(v) \equiv \pm\kloan_\gamma(v)\bmod{2\pi}$.
		By Definition \ref{defi.angles} above, we know that $\gauan(v)\in\{0, \pi/2, 3\pi/2, 2\pi\}$. 
		What we have shown in item (1)  proves that the $v$-th factor in \eqref{eq.Lfunc.alter2} does not vanish at $T=q^{-1}$. 
		Hence $L(E_{\gamma, a}, q^{-1})$ is nonzero.
	\end{enumerate}
	\end{rema}

\subsection[Distribution of Kloosterman angles]{Angular distribution of the sums $\Kloosn_\gamma(v)$}
\label{sec.distrib.kloos}

Let $\mu_\infty$ denote the Sato--Tate measure on~$[0,\pi]$: recall that $\dd\mu_{\infty} := \frac{2}{\pi}\sin^2\theta\dd\theta$.
Consider the set $\{\kloan_\gamma(v)\}_{v\in P_q(a)}$ of angles introduced in the previous subsection. 
We denote by $\mu_a$ the discrete probability measure on $[0,\pi]$ supported on $\{\kloan_\gamma(v)\}_{v\in P_q(a)}$. 
In other words, we put
\[\mu_a := \frac{1}{|P_q(a)|} \sum_{v\in P_q(a)}\delta\{\kloan_\gamma(v)\},\]
where $\delta\{x\}$ denotes the Dirac delta measure at $x\in[0,\pi]$. 
Note that $\mu_a$    depends neither on the choice of a non-trivial additive character $\psi_q$ on $\F_q$ nor on that of an embedding $\iota:\Qbar\into\C$. 
The measure $\mu_a$ does depend on $\F_q$ and $\gamma$ though, % our choices of $\F_q$, $\psi_q$, $\iota$ and $\gamma$, 
but  we chose not to reflect this in the notation for brevity. 

In a previous work, the first-named author has proved that, as $a\to\infty$,  the sequence of measures $\{\mu_a\}_{a\geq 1}$ converges weak-$\ast$ to $\mu_\infty$ in a quantitative way (see Theorem 6.6 in \cite{Griffon_LegAS}). 
More specifically, we have

	\begin{theo}\label{theo.distrib.kloos} 
	In the above setting, given any continuously differentiable function $g:[0,\pi]\to\C$ and any~$\gamma\in\F_q^\times$,  
	as $a\geq 1$ tends to $+\infty$, one has
	\begin{equation}
	\left|\int_{[0,\pi]} g \dd\mu_a - \int_{[0,\pi]} g \dd\mu_\infty\right|
	\ll_{q} \frac{a^{1/2}}{q^{a/4}}\cdot\int_{0}^\pi |g'(t)|\dd t,
	\end{equation}
	where the implicit constant is effective and depends at most on $q$.
	\end{theo}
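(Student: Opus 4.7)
The plan is to reduce the weak-$*$ convergence $\mu_a \to \mu_\infty$ to a quantitative bound on the ``Chebyshev moments'' of $\mu_a$, and then establish that bound via the cohomological interpretation of Kloosterman sums. The Chebyshev polynomials of the second kind, $U_n(\cos\theta) = \sin((n+1)\theta)/\sin\theta$ for $n\geq 0$, form an orthonormal basis of $L^2([0,\pi],\mu_\infty)$ satisfying $\int U_n\dd\mu_\infty = \delta_{n,0}$. First I would develop an effective Erd\H{o}s--Tur\'an--Koksma inequality in this basis: expanding the smooth function $g$ as a sine series on $[0,\pi]$ and integrating by parts once produces Chebyshev coefficients of size $O(\int_0^\pi|g'|\dd t / n)$. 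Truncating at a parameter $N\geq 1$ to be chosen, one obtains
\[\Bigl|\int g\dd\mu_a - \int g\dd\mu_\infty\Bigr| \ll \frac{\|g\|_\infty}{N} + \int_0^\pi|g'(t)|\dd t \cdot \sum_{n=1}^N \frac{|M_n(a)|}{n},\]
where $M_n(a) := \int U_n \dd\mu_a$ is the $n$-th Chebyshev moment of $\mu_a$.

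The core step is to bound $M_n(a)$. Writing $\alpha_v := e^{i\kloan_\gamma(v)}$ and using the elementary expansion $U_n(\cos\theta) = \sum_{k=0}^{n} e^{i(n-2k)\theta}$, one decomposes $|P_q(a)|\cdot M_n(a)$ into partial sums of powers $\alpha_v^m$ for $|m|\leq n$. A repeated application of the Hasse--Davenport relation \refKloos{item.kloos.HD} then identifies $q^{m\deg v/2}(\alpha_v^m + \bar\alpha_v^m)$ with a normalised Kloosterman sum over the extension $\F_{q^{m\deg v}}$, so that, after summing over $v\in P_q(a)$ and reorganising by degree, $M_n(a)$ becomes, up to an absolutely bounded error term arising from degenerate indices, a trace of Frobenius on $H^1_c(\mathbb{G}_{m,\bar{\F_q}},\mathrm{Sym}^n\mathrm{Kl}_2)$. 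Katz's theorem that the geometric monodromy of the Kloosterman sheaf $\mathrm{Kl}_2$ is $\mathrm{SL}_2$ in odd characteristic implies that $\mathrm{Sym}^n\mathrm{Kl}_2$ is geometrically irreducible and non-trivial for every $n\geq 1$, forcing the corresponding $H^0_c$ and $H^2_c$ to vanish. Combined with Deligne's purity result for $\mathrm{Kl}_2$ and the Grothendieck--Lefschetz trace formula, this yields the moment bound
\[|M_n(a)| \ll (n+1)\cdot\frac{a}{q^{a/2}},\]
with an effective implied constant depending only on $q$. The factor $(n+1)$ reflects the Euler--Poincar\'e dimension $\dim H^1_c(\mathrm{Sym}^n\mathrm{Kl}_2) = O(n+1)$, while the ratio $a/q^{a/2}$ absorbs both the normalisation $|P_q(a)|\asymp q^a/a$ from \eqref{eq.estimate.Pqa} and the weight-one square-root cancellation.

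Plugging the moment bound into the Erd\H{o}s--Tur\'an step gives
\[\Bigl|\int g\dd\mu_a - \int g\dd\mu_\infty\Bigr| \ll \frac{\|g\|_\infty}{N} + \int_0^\pi|g'(t)|\dd t\cdot\frac{Na}{q^{a/2}},\]
and balancing the two contributions with the choice $N\asymp q^{a/4}/a^{1/2}$ produces the advertised rate $a^{1/2}/q^{a/4}$. The main obstacle, and the nonelementary heart of the argument, is the moment bound: it relies crucially on Katz's computation of the geometric monodromy of $\mathrm{Kl}_2$ as $\mathrm{SL}_2$, together with the full strength of Deligne's Weil II to extract square-root cancellation and the control of local monodromy at $0$ and $\infty$ to bound the Euler--Poincar\'e characteristic linearly in $n$. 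The Chebyshev-expansion framework and the Hasse--Davenport reduction, by contrast, are essentially formal.
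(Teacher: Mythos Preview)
The paper does not give its own proof of this statement; it simply refers the reader to section~6 of \cite{Griffon_LegAS}, where the same measure (there denoted $\nu_a$) is treated. Your outline --- an Erd\H{o}s--Tur\'an/Koksma reduction to Chebyshev moment bounds $M_n(a)=\int U_n\,\dd\mu_a$, followed by a sheaf-theoretic estimate of those moments via Katz's $\mathrm{SL}_2$-monodromy for $\mathrm{Kl}_2$ and Deligne's Weil~II --- is exactly the strategy carried out in that reference, so your approach and the paper's cited proof agree.

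Two small organisational points. First, the Hasse--Davenport step you describe (expanding $U_n$ into powers $\alpha_v^m+\bar\alpha_v^m$ and passing to extensions $\F_{q^{m\deg v}}$) is correct as an identity but is not how the moment bound is usually assembled: it is cleaner to recognise $U_n(\cos\kloan_\gamma(v))$ directly as the normalised trace of $\mathrm{Frob}_v$ on the stalk of $\mathrm{Sym}^n\mathrm{Kl}_2$, group the $v\in P_q(a)$ by their degree $d\mid a$, apply Grothendieck--Lefschetz over $\F_{q^a}$ to the dominant $d=a$ term, and bound the remaining $d<a$ terms trivially by $O((n+1)q^{a/2})$. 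Second, for the balancing step to yield a bound purely in $\int_0^\pi|g'|$, you should first replace $g$ by $g-g(0)$ (which does not change the left-hand side, both measures being probability measures) so that $\|g\|_\infty\leq\int_0^\pi|g'|$; otherwise your displayed Erd\H{o}s--Tur\'an inequality with the term $\|g\|_\infty/N$ would optimise to a bound involving $(\|g\|_\infty\int|g'|)^{1/2}$ rather than $\int|g'|$ alone.
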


\noindent
The reader is referred to section 6 of \cite{Griffon_LegAS} for a detailed proof of that statement. 
Note that $\mu_a$ is the same measure as the one denoted by $\nu_a$ there.

\subsection{`Small' angles of Kloosterman sums} 
\label{sec.small.angles}
As we have seen in Remark \ref{rema.nonvanishing.reloaded}, the fact that $L(E_{\gamma, a}, T)$ does not vanish at $T=q^{-1}$ is equivalent to the statement that none of the angles $\kloan_\gamma(v)$ lies in $\{0, \pi/2, \pi\}$.  
In this subsection, we obtain a quantitative version of  the fact that the set $\{\kloan_\gamma(v)\}_{v\in P_q(a)}$ ``avoids'' $\{0, \pi/2, \pi\}$.

	\begin{theo}\label{theo.small.angles}
	There exists a positive constant $\sigma_p$, depending at most on $p$, such that the following holds.
	Let $\F_q$ be a finite field of characteristic $p$ endowed with a nontrivial additive character $\psi_q$, and fix an embedding $\iota:\Qbar\into\C$.
	For any parameter $\gamma\in\F_q^\times$ and any integer $a\geq 1$, one has 
	\[\{\kloan_\gamma(v)\}_{v\in P_q(a)}
	\subset \big[\epsilon_{a}, \tfrac{\pi}{2} - \epsilon_{a}\big]
	\cup\big[\tfrac{\pi}{2} + \epsilon_{a}, \pi - \epsilon_{a}\big]
	\quad \text{where } \epsilon_a = (q^a)^{-\sigma_p}.\]
	In other words, the measure $\mu_a$ is supported on $\big[\epsilon_{a}, \tfrac{\pi}{2} - \epsilon_{a}\big]\cup\big[\tfrac{\pi}{2} + \epsilon_{a}, \pi - \epsilon_{a}\big]$. 
	\end{theo}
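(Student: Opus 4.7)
The plan is to translate the angular conditions on $\kloan_\gamma(v)$ into quantitative lower bounds on the magnitudes of the nonzero algebraic integers $\alpha := \Kloosn_\gamma(v)$ and $\beta := \Kloosn_\gamma(v)^2 - 4 q^{\deg v}$ in $\Z[\zeta_p + \zeta_p^{-1}]$. Indeed, by definition $\iota(\alpha) = 2q^{\deg v/2}\cos \kloan_\gamma(v)$, and since $\kln_\gamma(v)\kln'_\gamma(v) = q^{\deg v}$, one has $\iota(\beta) = (\iota(\kln_\gamma(v)) - \iota(\kln'_\gamma(v)))^2 = -4 q^{\deg v}\sin^2\kloan_\gamma(v)$. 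Via the elementary bound $|\sin y| \leq |y|$, it therefore suffices to prove lower bounds of the shape $|\iota(\alpha)| \gg_p q^{\deg v/2}\cdot (q^a)^{-\sigma_p}$ and $|\iota(\beta)| \gg_p q^{\deg v}\cdot (q^a)^{-2\sigma_p}$: the first controls the distance of $\kloan_\gamma(v)$ to $\pi/2$, the second its distance to $\{0,\pi\}$.

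To prove these, I would first verify that both $\alpha$ and $\beta$ are nonzero. Lemma~\ref{lemm.kloos.slope} states that $\alpha$ is a $\gP$-adic unit, and the same lemma forces $\beta \neq 0$, since $\alpha^2 = 4 q^{\deg v}$ would entail $\ord_\gP \alpha = \deg v/2 > 0$. As elements of $\Z[\zeta_p + \zeta_p^{-1}]$, both $\alpha$ and $\beta$ thus have nonzero absolute norms down to $\Q$, which are in turn nonzero rational integers and hence of absolute value at least $1$.

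Next, I would bound all Galois conjugates of $\alpha$ and $\beta$ from above. Every $\sigma \in \Gal(\Q(\zeta_p + \zeta_p^{-1})/\Q)$ acts on $\alpha$ by permuting the underlying nontrivial additive character on $\F_v$, so that $\sigma(\alpha)$ is itself a Kloosterman sum attached to the place $v$; the Weil bound \refKloos{item.kloos.magnitude} then gives $|\sigma(\alpha)| \leq 2q^{\deg v/2}$, and consequently $|\sigma(\beta)| \leq 8 q^{\deg v}$, for every $\sigma$. Since $[\Q(\zeta_p + \zeta_p^{-1}) : \Q] = (p-1)/2$, isolating the factor indexed by $\iota$ in each norm yields
\[|\iota(\alpha)| \geq (2 q^{\deg v/2})^{-(p-3)/2} \qquad \text{and}\qquad |\iota(\beta)| \geq (8 q^{\deg v})^{-(p-3)/2}.\]

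Feeding these back into the reduction of the first paragraph and using $\deg v \leq a$, one obtains $|\cos\kloan_\gamma(v)| \geq c_p\, q^{-a(p-1)/4}$ and $|\sin\kloan_\gamma(v)| \geq c_p\, q^{-a(p-1)/4}$ for some explicit constant $c_p > 0$ depending only on $p$. Applying $|\sin y| \leq |y|$ near each of $0,\pi/2,\pi$ then gives the same lower bound on the distance from $\kloan_\gamma(v)$ to the bad set $\{0, \pi/2, \pi\}$. Since $q \geq p \geq 3$ and $a \geq 1$, the constant $c_p$ can be absorbed into a slightly larger, still $p$-dependent exponent, so that any sufficiently large choice of $\sigma_p$ works (one may take $\sigma_p = (p-1)/2 + 1$, say). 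The only delicate point is verifying that Galois conjugates of Kloosterman sums are again Kloosterman sums, so that the Weil bound applies uniformly to every conjugate; this follows immediately from the explicit action of $\Gal(\Q(\zeta_p)/\Q)$ on the character $\psi_{\F_v}^{(\beta_v)}$.
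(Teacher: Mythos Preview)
Your proof is correct, and it takes a somewhat different route from the paper's. Both arguments are, at bottom, instances of the Liouville-type ``product formula'' lower bound, but they are packaged quite differently.

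The paper works with the non-integral algebraic number $\kappa = \kln_\gamma(v)\cdot q^{-\deg v/2}$ (so that $\iota(\kappa)=\e^{i\kloan_\gamma(v)}$), invokes the general Liouville inequality of Mignotte--Waldschmidt after bounding $[\Q(\kappa):\Q]\leq 2(p-1)$ and $ht(\kappa)\leq \tfrac12\log q^{\deg v}$, and then evaluates at the three test polynomials $X-1$, $X+1$, $X^4-1$ to control the distances to $0$, $\pi$, and $\pi/2$ respectively. This yields the explicit value $\sigma_p=6p-4$.

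You instead stay inside the ring of algebraic integers $\Z[\zeta_p+\zeta_p^{-1}]$ by working with $\alpha=\Kloosn_\gamma(v)$ and $\beta=\alpha^2-4q^{\deg v}$, whose images under $\iota$ are $2q^{\deg v/2}\cos\kloan_\gamma(v)$ and $-4q^{\deg v}\sin^2\kloan_\gamma(v)$. The norm-to-$\Q$ of a nonzero algebraic integer has absolute value at least $1$, and the other archimedean embeddings are controlled by the Weil bound because the Galois action permutes the nontrivial additive characters (hence sends Kloosterman sums to Kloosterman sums). This avoids any appeal to heights or to an external Liouville-type theorem, and --- since you work in the totally real field of degree $(p-1)/2$ rather than in a field of degree $\leq 2(p-1)$ --- it yields a noticeably smaller admissible exponent, of order $(p-1)/2$ rather than $6p-4$. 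The trade-off is only that your argument uses two auxiliary integers $(\alpha,\beta)$ in place of one auxiliary number~$\kappa$; the paper's choice of $\kappa$ lets all three bad angles be handled with a single object at the cost of importing the height machinery.
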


\noindent
Corollary 5.5 in \cite{Griffon_LegAS} already shows that $\kloan_\gamma(v)\in\big[\epsilon_a, \pi-\epsilon_a]$ for all $v\in P_q(a)$;
so we actually only need to prove that $|\kloan_\gamma(v)-\pi/2|\geq \epsilon_a$.
However, for   convenience, %for the convenience of the reader, 
we give a complete proof of Theorem \ref{theo.small.angles}.

The main tool to prove this result is the following version of Liouville's inequality,  which is taken from the introduction of \cite{MiWa}.  
Let $P\in\Z[X]$ be a polynomial of degree $N$. For any algebraic number $\kappa\in\Qbar$, let $ht(\kappa)$ denote its absolute logarithmic Weil height. % (see \cite{MiWa} for the definition). 
If $P(\kappa)\neq 0$, then
\begin{equation}\label{eq.liouville}
\frac{\log |P(\kappa)|}{[\Q(\kappa):\Q]}\geq -\log\|P\|_1 -N \cdot ht(\kappa),
\end{equation}
in any complex embedding of $\Q(\kappa)$.
Here we have denoted by $\|P\|_1$  the sum of the absolute values of the coefficients of $P$.
The reader is referred to \cite{MiWa} for this statement and its proof.

	\begin{proof}[Proof of Theorem \ref{theo.small.angles}] 
	In the setting of the theorem, we claim that there exists a constant $\sigma_p>0$ such that, for all $v\in P_q(a)$, one has
	\[\min\{\kloan_\gamma(v), |\kloan_\gamma(v) - \pi/2|, \pi-\kloan_\gamma(v)\}\geq (q^{\deg v})^{-\sigma_p}. \]
	Since $\deg v$ divides $a$ for all $v\in P_q(a)$, the theorem clearly follows from this claim. 
	
	To prove the claim, we consider the algebraic number $\kappa:=\kln_\gamma(v)\cdot q^{-\deg v/2}$.
	Up to replacing $\kln_\gamma(v)$ by $\kln'_\gamma(v)$, we can assume that $\iota(\kappa)  = \e^{i\kloan_\gamma(v)}$.
	It is relatively easy to show that 
	$[\Q(\kappa):\Q]\leq 2(p-1)$ and that $ht(\kappa)\leq \log(q^{\deg v/2})$, see \cite[Lem.\ 5.3]{Griffon_LegAS} for details.
	Let us now apply Liouville's inequality to $\kappa$ with the following three polynomials: $P_1=X-1$, $P_2=X+1$ and $P_3=X^4-1$.
	
	By \refKloos{item.kloos.avoid} or Remark \ref{rema.nonvanishing.reloaded}, we know that $\kappa\notin\{1, i, -1\}$, so that $P_j(\kappa)\neq 0$ for $j\in \{1, 2, 3\}$.
	Combined with our estimates of the degree and of the height of $\kappa$, Liouville's inequality \eqref{eq.liouville} yields that 
	\begin{align}
	\forall j \in\{1, 2, 3\}, \qquad \log |\iota(P_j(\kappa))|
	&\geq - [\Q(\kappa):\Q] \cdot (\log 2 + 4\cdot ht(\kappa)) 
	 \geq - 2(p-1)\cdot (\log 2 + 2 \log q^{\deg v})	\notag\\
	 &  \geq - 6(p-1)\cdot \log q^{\deg v}.\label{eq.liouv.inter}	
	\end{align}
	Besides, a quick analysis shows that, for any $\theta\in [0, \pi]$, one has 
	\begin{eqnarray*}
	|P_1(\e^{i\theta})|	 =  |\e^{i\theta}-1| \leq |\theta| = \theta, 
	\qquad\quad |P_2(\e^{i\theta})|	 =  |\e^{i\theta}+1| = |\e^{i(\theta-\pi)} - 1|\leq |\theta-\pi| = \pi-\theta, \\
	\text{and }|P_3(\e^{i\theta})| = |\e^{4i\theta}-1| = |\e^{i\theta}-i| \cdot |\e^{i\theta}+i| \cdot  |\e^{2i\theta}-1| \leq 4 |\e^{i\theta}-i| = 4|\e^{i(\theta-\pi/2)}-1| \leq 4|\theta-\pi/2|. 
	\end{eqnarray*}
	Applying these inequalities to $\theta=\kloan_\gamma(v)$ and using \eqref{eq.liouv.inter}, we directly obtain that 
	\[\kloan_\gamma(v) \geq |P_1(\kappa)| \geq (q^{\deg v})^{-6(p-1)}, \quad\text{and that }\quad 
	\pi - \kloan_\gamma(v) \geq |P_2(\kappa)| \geq (q^{\deg v})^{-6(p-1)}.\]
	By further noting that $q^{2\deg v} \geq 4$, we also get that
	\[ \left|{\pi}/{2} - \kloan_\gamma(v)\right| 
	\geq |P_3(\kappa)|/4 \geq (q^{\deg v})^{-6(p-1)}/4 	\geq  (q^{\deg v})^{-(6p-4)}.\]
	The claim now follows immediately, with $\sigma_p = 6p-4>0$ being a suitable choice of constant.
	\end{proof}

 %% -- %% -- %% -- %% -- %% -- %% -- %% -- %% -- %% -- %% -- %% -- %% -- %% -- %% -- %% -- %% -- %% -- %% -- %% -- %% -- %% -- %% -- %% -- %% -- %% -- %%
\section[Bounds on the central value]{Estimates of the central value $L(E_{\gamma,a}, q^{-1})$}
\label{sec.bnd.spval}

The goal of this section is to prove a precise asymptotic estimate on the central value $L(E_{\gamma,a}, q^{-1})$ as $a\to\infty$.
This will provide the crucial input for our bounds on the size of $\sha(E_{\gamma, a})$ in the next section. 

The result is as follows:
	\begin{theo}\label{theo.bounds.spval.L} 
	Let $\F_q$ be a finite field of odd characteristic, and $K=\F_q(t)$.
	There are positive constants $c_1, c_2$ (depending at most on $q$) such that: 
	for any $\gamma\in\F_q^\times$ and any $a\geq 1$, the central value $L(E_{\gamma, a}, q^{-1})$ of the $L$-function of the elliptic curve $E_{\gamma, a}$ satisfies: 
	\begin{equation}\label{eq.bounds.spval}
	- \frac{c_1}{a}
	\leq \frac{\log |L(E_{\gamma,a}, q^{-1})|}{\log H(E_{\gamma,a})} 
	\leq \frac{c_2}{a}.
	\end{equation}	
	\end{theo}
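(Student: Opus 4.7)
The plan is to start from the explicit factorisation \eqref{eq.spval.expr} of $L(E_{\gamma,a}, q^{-1})$. The elementary identity $1 + e^{2i\phi} - 2e^{i\phi}\cos\theta = 2e^{i\phi}(\cos\phi - \cos\theta)$, together with the non-negativity of the central value, yields
\[ L(E_{\gamma,a}, q^{-1}) = \prod_{v \in P_q(a)} 2\,\bigl|\cos\gauan(v) - \cos\kloan_\gamma(v)\bigr|. \]
The upper bound is then immediate: every factor is bounded by $4$, so $\log L(E_{\gamma,a}, q^{-1}) \leq |P_q(a)|\,\log 4 \ll_q q^{a}/a$ by \eqref{eq.estimate.Pqa}, while $\log H(E_{\gamma,a}) = \tfrac12(q^a+1)\log q$ by \eqref{eq.invariants}. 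This gives the upper bound in \eqref{eq.bounds.spval} with a suitable $c_2$.

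For the lower bound, set $x_v := \cos\kloan_\gamma(v)\in [-1,1]$. Since $\cos\gauan(v)\in\{-1,0,1\}$, a direct case check gives the uniform pointwise inequality $2\,|\cos\gauan(v) - x_v| \geq |x_v^2(1-x_v^2)|$, so taking logarithms reduces the problem to a lower bound on
\[ \Sigma_a \;:=\; \sum_{v \in P_q(a)} w(x_v), \qquad w(x) := \log|x^2(1-x^2)|. \]
The function $w$ has logarithmic singularities at precisely $\{-1,0,1\}$, and $\int_0^\pi w(\cos\theta)\,d\mu_\infty$ converges to a finite constant $I$. If the $x_v$ were equidistributed in a sufficiently smooth sense, one would get $\Sigma_a \sim I\cdot|P_q(a)|$, which by \eqref{eq.estimate.Pqa} and \eqref{eq.invariants} is of order $\log H(E_{\gamma,a})/a$, as required.

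To realise this heuristic, I would introduce a truncation parameter $\delta = \delta(a)\in(0,1)$ and a $C^1$ function $w_\delta$ on $[-1,1]$ that coincides with $w$ outside the $\delta$-neighbourhoods of $\{-1,0,1\}$, is bounded below on those neighbourhoods by $O(\log\delta)$, and satisfies $\|w_\delta'\|_{L^1}\ll\log(1/\delta)$. Write $\Sigma_a = \sum_v w_\delta(x_v) + \sum_v (w - w_\delta)(x_v)$. For the first sum, apply the quantitative equidistribution of Theorem \ref{theo.distrib.kloos} to $\theta\mapsto w_\delta(\cos\theta)$: this yields $\sum_v w_\delta(x_v) = |P_q(a)|\cdot I + O\bigl(|P_q(a)|\cdot a^{1/2}q^{-a/4}\log(1/\delta)\bigr)$, whose main term is $O(|P_q(a)|)$. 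For the second sum, the avoidance statement Theorem \ref{theo.small.angles} guarantees that each $x_v$ lies at distance at least a fixed positive power of $q^{-a}$ from $\{-1,0,1\}$; combining this with a dyadic decomposition of the $\delta$-neighbourhoods of $\{-1,0,1\}$ and the same equidistribution result used to count the $x_v$ in each dyadic shell, the tail contribution is controlled by an absolutely convergent series with total mass $O(|P_q(a)|)$.

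The main obstacle is calibrating $\delta$ to balance the two error contributions while respecting the scale set by Theorem \ref{theo.small.angles}. Taking $\delta = q^{-\beta a}$ for a small enough $\beta > 0$ (large enough to make $\log(1/\delta)\cdot a^{1/2} q^{-a/4} = o(1)$ in the equidistribution error, yet small enough that the integrable behaviour of $w$ near its singularities keeps the dyadic sum of order $|P_q(a)|$) renders both errors $o(|P_q(a)|)$, giving $\Sigma_a \gg_q -|P_q(a)| \gg_q -q^a/a$. Dividing by $\log H(E_{\gamma,a}) \asymp q^a\log q$ yields the lower bound $\log L(E_{\gamma,a}, q^{-1})/\log H(E_{\gamma,a}) \geq -c_1/a$ in \eqref{eq.bounds.spval}.
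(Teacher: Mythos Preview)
Your approach is correct and follows the paper's strategy almost exactly: the same trigonometric reduction, the same minorant $w(x)=\log|x^2(1-x^2)|$ (equivalently $W(\theta)=-\log(\sin^2\theta\cos^2\theta)$), the same smoothing-plus-equidistribution scheme drawing on Theorems~\ref{theo.distrib.kloos} and~\ref{theo.small.angles}, and the same upper bound via $|P_q(a)|\log 4$.

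The one place where the paper is cleaner is the handling of the tail $\sum_v (w-w_\delta)(x_v)$. You propose a dyadic decomposition of the $\delta$-neighbourhoods of $\{-1,0,1\}$ down to the avoidance scale $\epsilon_a$, counting points shell by shell via equidistribution. This works, but it is unnecessary: the paper simply chooses the smoothing parameter \emph{smaller} than $\epsilon_a$ (concretely $\epsilon=q^{-6qa}<\epsilon_a=q^{-(6p-4)a}$). Then Theorem~\ref{theo.small.angles} guarantees that every $\kloan_\gamma(v)$ lies in the region where $W_\epsilon=W$, so the tail is \emph{identically zero} and there is nothing to estimate. The only surviving errors are the equidistribution discrepancy $\ll a^{1/2}q^{-a/4}\cdot|\log\epsilon|$ and the continuous tail $\int|W-W_\epsilon|\,d\mu_\infty\ll\epsilon|\log\epsilon|$, both of which are harmless for $\epsilon=q^{-O(a)}$. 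Your calibration paragraph hints at tension between wanting $\beta$ ``large enough'' and ``small enough''; in fact there is no tension once you push $\delta$ below $\epsilon_a$, and then the dyadic machinery can be dropped entirely.
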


The proof of this theorem will be given in \S\ref{sec.proof.bounds.spval.L} after proving an intermediate result in the following subsection. 
The upper bound in \eqref{eq.bounds.spval} provides a slight improvement on the generic upper bound on the central value: 
for any non-isotrivial elliptic curve $E$ over $K$ with $L(E, q^{-1})\neq 0$, it is known that 
\[\frac{\log L(E, q^{-1})}{\log H(E)} \leq c_3\cdot\frac{\log\log\log H(E)}{\log\log H(E)} \qquad (\text{as } H(E)\to\infty),\]
for some constant $c_3>0$.
This can be proved by using the fact that the zeros of $L(E, T)$ become uniformly equidistributed on the circle $\{z\in\C:|z|=q^{-1}\}$ as $H(E)\to\infty$ (see \cite[Thm.\ 7.5]{HP15}).
For easier comparison, note that $\log\log H(E_{\gamma, a})$ here has the same order of magnitude as $a$, as was shown in  \eqref{eq.invariants}. 

The lower bound in \eqref{eq.bounds.spval} is, on the other hand, much stronger than the generic lower bound on the central value. 
For a non-isotrivial elliptic curve $E$ with $L(E, q^{-1})\neq 0$, the latter  only yields that
\[ -1 + \frac{c_4}{\log H(E)} \leq  \frac{\log L(E, q^{-1})}{\log H(E)}  \qquad (\text{as } H(E)\to\infty),\]
for some  $c_4<0$.
\emph{Via} the BSD formula, this translates into a lower bound on the order of $\sha(E)$ which is weaker than the one we require to prove Theorem \ref{itheo.large.sha}{\it(3)}.

\subsection{Convergence of a certain sequence}\label{sec.convergence}

Let $a\geq 1$ be an integer and $\gamma\in\F_q^\times$. 
We keep the notation introduced in section \ref{sec.distribution}.
Consider the non-negative function $W:[0,\pi]\to\R$ defined by 
\[ W(\theta):=\begin{cases}
- \log\big(\sin^2\theta\cdot\cos^2\theta\big)& \text{ if }\theta\in[0, \pi]\smallsetminus\{0, \pi/2, \pi\}, \\
0 & \text{ if } \theta\in\{0, \pi/2, \pi\}.
\end{cases}\]
The function $\theta\mapsto W(\theta)$ is continuously differentiable on $(0, \pi/2)\cup(\pi/2, \pi)$ and, for all $\theta\in[0, \pi]$, it satisfies ${W(\pi-\theta) = W(\theta)}$. 
A routine check shows that $W$ is integrable on $[0,\pi]$ for the Lebesgue measure as well as for the Sato--Tate measure $\mu_\infty$.
By Remark \ref{rema.nonvanishing.reloaded},
the function $W$ is also  integrable  for the measure~$\mu_a$ introduced in \S\ref{sec.distrib.kloos}:
it thus makes sense to consider the sequence $\left(\int_{[0,\pi]} W\dd\mu_a\right)_{a\geq1}$.
Even though we know that $\mu_a$ converges weak-$\ast$ to $\mu_\infty$ as $a\to\infty$ by Theorem \ref{theo.distrib.kloos}, we cannot directly conclude that this sequence converges because~$W$ is not continuous on $[0,\pi]$.

Nonetheless, the goal of this subsection is to show the following: 
	\begin{prop}\label{theo.convergence}
	In the above setting, 
	the sequence $\left(\int_{[0,\pi]} W\dd\mu_a\right)_{a\geq1}$ converges to $\int_{[0,\pi]}W\dd\mu_\infty$.
	More precisely, one has
	\[\left| \int_{[0,\pi]} W\dd\mu_a - \int_{[0,\pi]}W\dd\mu_\infty\right| 
	\ll_{q} \frac{a^{3/2}}{q^{a/4}} 	\qquad (\text{as }a \to\infty),\]
	where the implicit constant is effective and depends at most on $q$. 
	\end{prop}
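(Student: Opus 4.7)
The function $W$ is unbounded (with logarithmic singularities at $0,\pi/2,\pi$), so Theorem \ref{theo.distrib.kloos} does not apply directly. The plan is to approximate $W$ by a well-chosen $C^1$ function, apply Theorem \ref{theo.distrib.kloos} to the approximation, and control the two approximation errors using (i) the integrability of $W$ against $d\mu_\infty$ and (ii) the ``angular repulsion'' provided by Theorem \ref{theo.small.angles}, i.e., the fact that no $\kloan_\gamma(v)$ is closer than $\epsilon_a := q^{-a\sigma_p}$ to $\{0,\pi/2,\pi\}$.

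For a parameter $\epsilon$ with $\epsilon_a \leq \epsilon \leq 1/4$, I would construct a non-negative $C^1$ function $W_\epsilon: [0,\pi] \to \R$ coinciding with $W$ outside the $\epsilon$-neighborhoods of $\{0,\pi/2,\pi\}$, capped and smoothed inside so that $0 \leq W_\epsilon \leq W$. A direct computation of $W'(\theta)=2\tan\theta-2\cot\theta$ shows that $W_\epsilon$ has total variation $\int_0^\pi |W_\epsilon'(\theta)|\dd\theta = O(\log(1/\epsilon))$. I then split
\[
\int_{[0,\pi]} W \dd\mu_a - \int_{[0,\pi]}W\dd\mu_\infty = A_1 + A_2 + A_3,
\]
where $A_1 = \int(W-W_\epsilon)\dd\mu_a$, $A_2 = \int W_\epsilon \dd\mu_a - \int W_\epsilon\dd\mu_\infty$ and $A_3 = \int(W_\epsilon-W)\dd\mu_\infty$. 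The middle term is handled directly by Theorem \ref{theo.distrib.kloos} applied to $W_\epsilon$: $|A_2| \ll_q (a^{1/2}/q^{a/4}) \cdot \log(1/\epsilon)$. The term $A_3$ is controlled by the integrability of $W$ against $\dd\mu_\infty = \tfrac{2}{\pi}\sin^2\theta\dd\theta$; the dominant contribution comes from the neighborhood of $\pi/2$ (where $\sin^2\theta$ does not vanish), yielding $|A_3| \ll \epsilon\log(1/\epsilon)$.

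The hard part is bounding $A_1$, for which we need a uniform estimate on $\mu_a$ of small intervals around the three singular points. Two ingredients come together here. First, Theorem \ref{theo.small.angles} guarantees that $|W(\kloan_\gamma(v))| \leq -2\log\epsilon_a + O(1) = O_p(a)$ uniformly in $v\in P_q(a)$, so
\[
|A_1| \leq \bigl(\max_v W(\kloan_\gamma(v))\bigr)\cdot \mu_a(\text{union of $\epsilon$-neighborhoods}) \ll a \cdot \mu_a(\text{sing}_\epsilon).
\]
Second, to bound $\mu_a$ of an interval of length $O(\epsilon)$ one sandwiches its indicator between two $C^1$ bump functions $\chi_\pm$ with $\int_0^\pi|\chi_\pm'|\leq 2$, applies Theorem \ref{theo.distrib.kloos} to $\chi_+$, and uses $\int\chi_+\dd\mu_\infty \ll \epsilon$. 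This yields $\mu_a(\text{sing}_\epsilon) \ll \epsilon + a^{1/2}/q^{a/4}$ and hence $|A_1| \ll a\epsilon + a^{3/2}/q^{a/4}$.

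Combining these three bounds gives
\[
\Bigl|\int W \dd\mu_a - \int W \dd\mu_\infty\Bigr| \ll_q \; a\epsilon + \frac{a^{3/2}}{q^{a/4}} + \frac{a^{1/2}}{q^{a/4}}\log\tfrac{1}{\epsilon} + \epsilon\log\tfrac{1}{\epsilon}.
\]
Finally, choosing $\epsilon = q^{-a/4}$ (which is $\gg \epsilon_a$ since $\sigma_p = 6p-4 \gg 1/4$) makes $\log(1/\epsilon) \asymp a$, and each of the four terms is $O_q(a^{3/2}/q^{a/4})$, which is exactly the announced rate. The main obstacle is the bound on $A_1$: one must transfer the $C^1$ discrepancy of Theorem \ref{theo.distrib.kloos} into a uniform bound on $\mu_a$ of arbitrary small intervals, and crucially use Theorem \ref{theo.small.angles} to keep the pointwise values of $W$ on the support of $\mu_a$ under polynomial control in $a$.
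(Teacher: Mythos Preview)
Your argument is correct and yields the stated rate, but you take a more elaborate route for the term $A_1$ than the paper does. The paper makes the same three-term splitting and handles $A_2$, $A_3$ exactly as you do (same estimates $\int|W_\epsilon'|\ll\log(1/\epsilon)$ and $|A_3|\ll\epsilon\log(1/\epsilon)$). The difference lies entirely in $A_1$: rather than taking $\epsilon\geq\epsilon_a$ and then bounding $A_1$ by $(\sup W|_{\mathrm{supp}\,\mu_a})\cdot\mu_a(\text{sing}_\epsilon)$, the paper simply chooses $\epsilon$ \emph{smaller} than $\epsilon_a$ (concretely $\epsilon=q^{-6qa}<\epsilon_a$, using $6q>\sigma_p=6p-4$). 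With this choice, $W$ and $W_\epsilon$ coincide on the support of $\mu_a$ by Theorem~\ref{theo.small.angles}, so $A_1=0$ identically, and the whole error is carried by $|A_2|\ll (a^{1/2}/q^{a/4})\cdot|\log\epsilon|\asymp a^{3/2}/q^{a/4}$.

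Your approach has the merit of being more robust: it would still work if one only knew that the angles avoid $\{0,\pi/2,\pi\}$ by some margin $\epsilon_a$ decaying faster than $q^{-a/4}$ (so that you could still pick $\epsilon=q^{-a/4}\geq\epsilon_a$), whereas the paper's trick needs no such comparison but uses the full freedom to take $\epsilon$ arbitrarily small. On the other hand, the paper's version avoids the second invocation of Theorem~\ref{theo.distrib.kloos} (for the bump functions) and the pointwise bound on $W$, so it is shorter. One minor slip: your pointwise bound should read $W(\kloan_\gamma(v))\leq -4\log\epsilon_a+O(1)$ rather than $-2\log\epsilon_a$, since both $\sin$ and $\cos$ can be as small as $\asymp\epsilon_a$; this does not affect the $O_p(a)$ conclusion.
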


\noindent
Even though the exact value of the limit is of little importance to us, one can actually compute that $\int_{[0, \pi]}W\dd\mu_\infty= \log(16)$.
By definition of the measure $\mu_a$, the result above can thus be rewritten as
\[ \frac{1}{|P_q(a)|} \sum_{v\in P_q(a)} \log\big(\sin^2\kloan_\gamma(v)\cdot\cos^2\kloan_\gamma(v)\big) = -\log(16) +O\left(\frac{a^{3/2}}{q^{a/4}}\right), \quad (\text{as } a\to\infty).\]

\begin{proof}
We pick, once and for all, a nondecreasing continuously differentiable function $\beta_0:[0,1]\to[0,1]$ such that 
$\beta_0(x) = 0$ for all $x\in [0, 1/3]$, and $\beta_0(x) = 1$ for all $x\in [2/3, 1]$.
For a small enough $\epsilon>0$, we   define a ``smoothing'' function $\beta_\epsilon:[0,\pi]\to[0,1]$ as follows: define $\beta_\epsilon$ on $[0, \pi/2]$ by 
\[ \beta_\epsilon(\theta) :=
\begin{cases}
\beta_0(\theta/\epsilon) & \text{if } \theta\in[0, \epsilon], \\
1 				& \text{if } \theta\in[\epsilon, \pi/2-\epsilon], \\
\beta_0((\pi/2-\theta)/\epsilon) & \text{if } \theta\in[\pi/2- \epsilon, \pi/2], 
\end{cases}\]
and extend it to $[0,\pi]$ by requiring that $\beta_\epsilon(\pi-\theta) = \beta_\epsilon(\theta)$ for all $\theta\in [0,\pi]$. 

For any $\epsilon>0$, we put $W_\epsilon := W\cdot \beta_\epsilon$.
The function $W_\epsilon : [0,\pi]\to\R$ is continuously differentiable on $[0, \pi]$, it coincides with $W$ on $[\epsilon, \pi/2-\epsilon] \cup [\pi/2+\epsilon, \pi-\epsilon]$, and it satisfies $W_\epsilon(\pi-\theta) = W_\epsilon(\theta)$ for all $\theta\in [0,\pi]$. 
Let us also record the following estimates:
\begin{lemm}\label{lemm.analytic.est}
For all $\epsilon\in (0, 1/4)$, one has
\begin{multicols}{2}
\begin{enumerate}[(a)]
\item $\displaystyle \int_0^\pi |W'_\epsilon(t)|\dd t = O\big(|\log\epsilon|\big)$,
\item $\displaystyle \int_{[0,\pi]} |W-W_\epsilon|\dd\mu_\infty  = O\big(\epsilon|\log\epsilon|\big)$.
\end{enumerate}
\end{multicols}
\end{lemm}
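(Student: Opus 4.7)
The approach is a direct estimation using the explicit form of $W$ and the localization of $1-\beta_\epsilon$ near the singularities $\{0,\pi/2,\pi\}$ of $W$.

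First I would compute $W'$ explicitly on $(0,\pi)\setminus\{\pi/2\}$. A short calculation gives $W'(\theta)=2\tan\theta-2\cot\theta=-4\cot(2\theta)$, so that $|W'(\theta)|$ is comparable to $1/\theta$ near $0$, to $1/(\pi/2-\theta)$ near $\pi/2$, and to $1/(\pi-\theta)$ near $\pi$. In particular, $W$ itself is bounded by $C(1+|\log\theta|)$ near $0$, by $C(1+|\log|\pi/2-\theta||)$ near $\pi/2$, and symmetrically near $\pi$.

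For part (a), I write $W'_\epsilon=W'\beta_\epsilon+W\beta'_\epsilon$ and split $[0,\pi]$ into the four corner intervals $[0,\epsilon]$, $[\pi/2-\epsilon,\pi/2+\epsilon]$, $[\pi-\epsilon,\pi]$ and their complement. By the symmetry $W_\epsilon(\pi-\theta)=W_\epsilon(\theta)$ it suffices to handle $[0,\pi/2]$. On the ``safe'' part $[\epsilon,\pi/2-\epsilon]$, $\beta_\epsilon\equiv 1$, so $|W'_\epsilon|=|W'|=4|\cot(2\theta)|$ whose integral equals $-2\log\sin(2\epsilon)+2\log\sin(\pi/2-2\epsilon)$ (after splitting at $\pi/4$), contributing $O(|\log\epsilon|)$. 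On the corner $[0,\epsilon]$, the scaling $\beta_\epsilon(\theta)=\beta_0(\theta/\epsilon)$ gives $|\beta_\epsilon'|\le\|\beta_0'\|_\infty/\epsilon$ and support in $[\epsilon/3,\epsilon]$; thus $\int_0^\epsilon|W'|\beta_\epsilon\le\int_{\epsilon/3}^\epsilon(C/\theta)d\theta=O(1)$, while $\int_0^\epsilon|W||\beta_\epsilon'|\lesssim \epsilon^{-1}\int_{\epsilon/3}^\epsilon|\log\theta|d\theta=O(|\log\epsilon|)$. The corner at $\pi/2$ is handled identically after translating $\theta\mapsto\pi/2-\theta$. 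Summing gives (a).

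For part (b), since $W\ge 0$ and $\beta_\epsilon\in[0,1]$, one has $|W-W_\epsilon|=W(1-\beta_\epsilon)$, and $1-\beta_\epsilon$ is supported on the three corner intervals above and bounded by $1$. With $\dd\mu_\infty=\frac{2}{\pi}\sin^2\theta\dd\theta$, the contributions from the corners at $0$ and $\pi$ satisfy $\int_0^\epsilon W(\theta)\sin^2\theta\dd\theta\ll \int_0^\epsilon\theta^2|\log\theta|\dd\theta=O(\epsilon^3|\log\epsilon|)$, which is negligible. The corner at $\pi/2$ dominates: there $\sin^2\theta=1+O(\epsilon^2)$ and $W(\theta)=-2\log|\cos\theta|+O(1)\ll 1+|\log|\pi/2-\theta||$, hence
\[\int_{\pi/2-\epsilon}^{\pi/2+\epsilon}W(\theta)\sin^2\theta\dd\theta\ll \int_{-\epsilon}^{\epsilon}(1+|\log|u||)\dd u=O(\epsilon|\log\epsilon|).\]
Combining the three corners yields the bound in (b).

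The main obstacle is purely bookkeeping: tracking the logarithmic singularities of $W$ and $W'$ at the three bad points and verifying that the $W\beta'_\epsilon$ term in part (a) (where $\beta'_\epsilon$ blows up like $1/\epsilon$) is correctly tamed by the fact that $\int_0^\epsilon|W|\lesssim\epsilon|\log\epsilon|$, which cancels the $1/\epsilon$ to leave a logarithmic cost. Once that cancellation is in place, all estimates are straightforward.
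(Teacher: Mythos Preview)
Your proposal is correct and follows essentially the same approach as the paper's proof: write $W'_\epsilon = W'\beta_\epsilon + W\beta'_\epsilon$, split the interval into ``corner'' neighbourhoods of the singularities and a ``safe'' middle part, and use that $|W'(\theta)|\asymp 1/\mathrm{dist}(\theta,\{0,\pi/2,\pi\})$ together with $|\beta'_\epsilon|\le M/\epsilon$ supported on intervals of length $O(\epsilon)$. The only cosmetic differences are: the paper exploits the full $\pi/2$-periodicity of $W$ (since $W(\theta)=-\log\frac14\sin^2(2\theta)$) and the symmetry about $\pi/4$ to reduce everything to the single corner at $0$ on $[0,\pi/4]$, whereas you reduce only to $[0,\pi/2]$ and handle the corners at $0$ and $\pi/2$ separately; and in part (b) the paper simply bounds $\sin^2\theta\le 1$ uniformly, while you observe (correctly, but unnecessarily for the stated bound) that the $\sin^2\theta$ weight makes the corners at $0$ and $\pi$ contribute only $O(\epsilon^3|\log\epsilon|)$.
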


\noindent
We postpone proving these two bounds until the end of the section, and carry on with the proof of Proposition~\ref{theo.convergence}. 
In the notation introduced above, the triangle inequality yields that 
\begin{multline}\label{eq.Ineg.Triangle}
\left|\int_{[0,\pi]}W\dd\mu_a - \int_{[0,\pi]}W\dd\mu_\infty\right|
\leq \int_{[0,\pi]}|W-W_\epsilon|\dd\mu_a 
\\ + \left|\int_{[0,\pi]}W_\epsilon\dd\mu_a - \int_{[0,\pi]}W_\epsilon\dd\mu_\infty\right| 
+ \int_{[0,\pi]}|W-W_\epsilon|\dd\mu_\infty.
\end{multline}
We treat separately each of the three terms appearing in the right-hand side of this inequality.

The third term $\int_{[0,\pi]}|W-W_\epsilon|\dd\mu_\infty$ can be directly bounded with the help of Lemma \ref{lemm.analytic.est}{\it(b)}. 

To show that the middle term is small when $a\to\infty$, we apply Theorem \ref{theo.distrib.kloos} to $W_\epsilon$. 
This is permissible since $W_\epsilon$ is continuously differentiable on $[0,\pi]$.
We obtain that 
		 \[\left|\int_{[0,\pi]}W_\epsilon\dd\mu_a - \int_{[0,\pi]}W_\epsilon\dd\mu_\infty\right|  
		 \ll_q \frac{a^{1/2}}{q^{a/4}}\cdot \int_0^\pi |W'_\epsilon(t)|\dd t 
		 \ll_q \frac{a^{1/2}}{q^{a/4}}\cdot |\log\epsilon|, \]
where the last inequality follows from Lemma \ref{lemm.analytic.est}{\it(a)}. 

Finally, we claim that the first term $\int_{[0,\pi]}|W-W_\epsilon|\dd\mu_a$ on the right-hand side of \eqref{eq.Ineg.Triangle} vanishes provided that $\epsilon>0$ is small enough.
Indeed, we know by Theorem \ref{theo.small.angles} that the support of $\mu_a$ is contained in $[\epsilon_a, \pi/2-\epsilon_a] \cup [\pi/2 + \epsilon_a, \pi-\epsilon_a]$ where $\epsilon_a = (q^{a})^{-\sigma_p}$ for some constant $\sigma_p>0$. 
By the proof of that theorem, one can take $\sigma_p=6p-4$. 
On the other hand, as was noted in a previous paragraph, $W$ and $W_\epsilon$ coincide on $[\epsilon, \pi/2-\epsilon]\cup[\pi/2+\epsilon, \pi-\epsilon]$.
Hence, choosing $\epsilon < \epsilon_a$, we have $\int_{[0,\pi]}|W-W_\epsilon|\dd\mu_a =0$. 

Summing up these three contributions, inequality \eqref{eq.Ineg.Triangle} yields that
\[ \left|\int_{[0,\pi]}W\dd\mu_a - \int_{[0,\pi]}W\dd\mu_\infty\right| 
\ll_{q} 0 + \frac{a^{1/2}}{q^{a/4}}\cdot |\log\epsilon| + \epsilon |\log\epsilon|, \]
for all $\epsilon < \epsilon_a$. 
Note that $(q^{a})^{-6q}<\epsilon_a$ because $6q>6p-4=\sigma_p$.
We may therefore take $\epsilon = (q^{a})^{-6q}$, 
and this choice provides the desired bound. 
\end{proof}

\begin{proof}[Proof of Lemma \ref{lemm.analytic.est}] 

Both  $W$ and $W_\epsilon$ are periodic of period $\pi/2$ and are symmetric around $\pi/4$ 
(\ie{}, $W(\pi/2-\theta) = W(\theta)$ for all $\theta\in[0, \pi/2]$, and similarly for $W_\epsilon$). 
The same holds for $|W'_\epsilon|$ and $|W - W_\epsilon|$.
We are thus reduced to proving the following two bounds: 
\[ \text{\it(a')} \quad \int_{0}^{\pi/4} |W'_\epsilon(t)|\dd t = O(|\log\epsilon|) \qquad \text{ and }\qquad 
	\text{\it(b')} \quad  \int_{[0,\pi/2]} |W-W_\epsilon|\dd\mu_\infty  = O\big(\epsilon|\log\epsilon|\big). \]
To that end, we first gather a few useful remarks:
\begin{enumerate}[(i)]
\item\label{item.an.i}
Since $W_\epsilon = W\cdot \beta_\epsilon$ and since $0\leq \beta_\epsilon(t)\leq 1$,  we have 
$|W_\epsilon'(t)| \leq |W'(t)| + |\beta'_\epsilon(t)|\cdot W(t)$ for all $t\in[0,\pi]$.  

\item\label{item.an.ibis}
We have $|\beta'_\epsilon(t)|=0$ for all $t\in[\epsilon, \pi/4]$ and $|\beta'_\epsilon(t)|\leq M/\epsilon$ for $t\in[0, \epsilon]$, where $M:=\sup_{x\in[0,1]} |\beta'_0(x)|$.

\item\label{item.an.ii} 
The function  $W_\epsilon$ is constant (equal to $0$) on $[0, \epsilon/3]$, so that $W'_\epsilon(t)=0$ for all $t\in(0, \epsilon/3)$.

\item\label{item.an.iibis} 
The functions $W$ and $W_\epsilon$ are symmetric around $\pi/4$, and they coincide on $[\epsilon, \pi/2- \epsilon]$. 
Moreover, $|W(t)-W_\epsilon(t)| = (1-\beta_\epsilon(t))\cdot W(t) \leq W(t)$ for all $t\in[0,\pi/2]$. 

\item\label{item.an.iii} 
For all $t\in(0, \pi/4]$, it is classical that $(\sin t)^{-1} \leq \pi/(2t)$ and $(\cos t)^{-1} \leq \pi/(\pi-2 t)$. 
We deduce that 
\[W( t) = 2\log\big( (\sin t)^{-1} \cdot(\cos t)^{-1}\big) \leq 2\log\left(\frac{\pi^2}{2 t(\pi-2 t)}\right) \leq 2\log\frac{\pi}{ t},\]
for all $t$ in this interval. In particular, this yields that
$\int_0^\epsilon W( t)\dd t \leq 	-2 \int_0^\epsilon \log\frac{ t}{\pi}\dd t \ll \epsilon |\log\epsilon|$.

\item\label{item.an.iv}	
For all $ t\in(0, \pi/4]$, a simple calculation shows that 
$\displaystyle W'( t) = 2\left(\frac{\sin t}{\cos t} - \frac{\cos t}{\sin t}\right) = - \frac{2\cos(2 t)}{\sin t\cdot\cos t}$.
Thus, using the same classical inequalities as in the preceding item, we obtain that, for all $ t\in(0, \pi/4]$,
\[|W'( t)|\leq \frac{2}{\sin t\cdot\cos t} \leq \frac{2\pi^2}{2 t(\pi-2 t)}\leq \frac{2\pi}{t}.\]
\end{enumerate}
We can now combine the above items to prove the desired inequalities.
First, we have
\begin{align*}
\int_0^\epsilon |W'_\epsilon( t)|\dd t
	&\stackrel{\text{by \eqref{item.an.ii} }}%
{=} \int_{\epsilon/3}^\epsilon |W'_\epsilon( t)|\dd t
	\stackrel{\text{by \eqref{item.an.i} }}%
{\leq}  \int_{\epsilon/3}^\epsilon |W'( t)|\dd t + \int_{\epsilon/3}^\epsilon W( t) |\beta'_\epsilon( t)|\dd t \\
	&\stackrel{\text{by \eqref{item.an.iv}, \eqref{item.an.ibis}  }}%
{\leq} \int_{\epsilon/3}^\epsilon \frac{2\pi}{ t}\dd t + \int_{\epsilon/3}^\epsilon W( t) \cdot\frac{M}{\epsilon}\dd t 
	\stackrel{\text{by \eqref{item.an.iii}} }%
{\ll} |\log\epsilon|.
\end{align*}
Furthermore, we have
\begin{align*}
\int_\epsilon^{\pi/4} |W'_\epsilon( t)|\dd t
	\stackrel{\text{by \eqref{item.an.i} }}%
{\leq} \int_\epsilon^{\pi/4} |W'( t)|\dd t +	\int_\epsilon^{\pi/4}W( t) |\beta'_\epsilon( t)|\dd t
	\stackrel{\text{by \eqref{item.an.iv}, \eqref{item.an.ibis} }}%
{\leq} \int_\epsilon^{\pi/4}\frac{2\pi}{ t}\dd t 
\ll |\log\epsilon|.
\end{align*}
Summing the last two displays proves {\it(a')}. Finally, we notice that
\begin{align*}
\int_{[0, \pi/2]} |W-W_{\epsilon}|\dd\mu_\infty 
	& %= \frac{2}{\pi}\int_0^{\pi/2} |W(t)- W_\epsilon(t)|\sin^2(t)\dd t 
\leq \frac{2}{\pi}\int_0^{\pi/2} |W(t)- W_\epsilon(t)|\dd t
	 \stackrel{\text{by \eqref{item.an.iibis} }}%{=} \frac{4}{\pi} \int_0^\epsilon (1- \beta_\epsilon(t))W(t) \dd t 
\leq \frac{4}{\pi} \int_0^\epsilon  W(t) \dd t  
	\stackrel{\text{by \eqref{item.an.iii} }}%
{\ll} \epsilon  |\log\epsilon|, 
\end{align*}
which proves {\it(b')} and concludes the proof of the lemma.
\end{proof}

\subsection{Proof of Theorem \ref{theo.bounds.spval.L}}\label{sec.proof.bounds.spval.L}
With Proposition \ref{theo.convergence} at hand, we can now prove Theorem \ref{theo.bounds.spval.L}.
In the notation introduced in the previous sections, we begin by taking the logarithm of identity \eqref{eq.spval.expr}:
\begin{multline}\label{eq.bnd.spval.1}
\log L(E_{\gamma,a}, q^{-1})  
= \log|L(E_{\gamma,a}, q^{-1})| %= \sum_{v\in P_q(a)} \log\big|\big(1-\e^{i(\gauan(v) + \kloan_\gamma(v))}\big)\big(1-\e^{i(\gauan(v) - \kloan_\gamma(v))}\big)\big|\notag \\ &
= \sum_{v\in P_q(a)} \log\big| 1 + \e^{2i\gauan(v)}-2 \cdot \e^{i\gauan(v)}  \cdot \cos\kloan_\gamma(v) \big| \\  
= \sum_{v\in P_q(a)}  \log|F_{v}(\kloan_\gamma(v))|,
\end{multline}
where  the functions $F_{v}:[0,\pi]\to\R$  are defined as follows:
\[\forall v\in P_q(a), \qquad 
F_v: \theta\mapsto
\begin{cases}
 2-2\cos\theta & \text{if } \gauan(v)= 0, \\
-2i\cos\theta & \text{if } \gauan(v)= \pi/2, \\
2+2\cos\theta & \text{if } \gauan(v)= \pi, \\
2i\cos\theta & \text{if } \gauan(v)= 3\pi/2. \\
\end{cases}\]
Note that  $|F_v(\kloan_\gamma(v))|>0$ for all $v\in P_q(a)$ because $\kloan_\gamma(v)\notin\{0, \pi/2, \pi\}$ by \refKloos{item.kloos.avoid}  (see Remark \ref{rema.nonvanishing.reloaded}). 
Straightforward analytic estimates show that, for any place $v\in P_q(a)$, one has
 \[\forall\theta\in [0,\pi], \qquad 
 \sin^2\theta\cdot\cos^2\theta \leq |F_{v}(\theta)|\leq 4.\]
In particular, writing $W(\theta) = - \log\big(\sin^2\theta\cdot\cos^2\theta\big)$ as in \S\ref{sec.convergence}, we obtain that 
\[\forall v\in P_q(a), \qquad - W(\kloan_\gamma(v)) \leq \log |F_v(\kloan_\gamma(v))| \leq \log 4.\]
Summing this chain of inequalities over all $v\in P_q(a)$, equality \eqref{eq.bnd.spval.1} leads to
\[ \frac{1}{\log H(E_{\gamma, a})}\cdot\sum_{v\in P_q(a)} -W(\kloan_\gamma(v)) 
\leq \frac{\log L(E_{\gamma, a},q^{-1})}{\log H(E_{\gamma, a})} 
\leq \log 4\cdot \frac{|P_q(a)|}{\log H(E_{\gamma, a})}.\]
It is clear from these inequalities that Theorem \ref{theo.bounds.spval.L} will be proved once we show the following two assertions: 
\begin{enumerate}[$(C_1)$]
    \item 
    	There exists a constant $c_1>0$ such that 
    	$\displaystyle \frac{1}{\log H(E_{\gamma, a})} \sum_{v\in P_q(a)} W(\kloan_\gamma(v)) \leq c_1 \cdot a^{-1}$,
    \item 
    	There exists a constant $c_2>0$ such that 
    	$\displaystyle \frac{|P_q(a)|}{\log H(E_{\gamma, a})}\leq c_2 \cdot a^{-1}$.
\end{enumerate}
We thus now prove these two claims, starting with the second one.

We know from \eqref{eq.invariants} that $\log H(E_{\gamma, a}) \gg_q q^{a} $, and from \eqref{eq.estimate.Pqa} that  $|P_q(a)|\ll_q q^a/a$. 
The second claim $(C_2)$ follows from these two estimates.
Next we turn to the proof of $(C_1)$: by Proposition \ref{theo.convergence}, we have
\[\frac{1}{|P_q(a)|}\sum_{v\in P_q(a)} W(\kloan_\gamma(v)) 
= \int_{[0,\pi]}W\dd\mu_\infty + O_{q}\left(\frac{a^{3/2}}{q^{a/4}}\right),\]
as $a\to\infty$, where the implicit constant depends at most on $q$. 
Therefore, making use of the estimate in $(C_2)$ which we have just proved, we obtain that
\begin{align*}
0 \leq \frac{1}{\log H(E_{\gamma, a})}\sum_{v\in P_q(a)} W(\kloan_\gamma(v))
&=\frac{|P_q(a)|}{\log H(E_{\gamma, a})} \cdot \frac{1}{|P_q(a)|}\sum_{v\in P_q(a)} W(\kloan_\gamma(v)) \\
&\leq \frac{c_2}{a} \cdot \left(\int_{[0,\pi]}W\dd\mu_\infty + O_{q}\left(\frac{a^{3/2}}{q^{a/4}}\right)\right)
\leq \frac{c_1}{a},
\end{align*}
for some constant $c_1>0$, depending at most on  $q$. 
This proves the first claim $(C_1)$ and concludes the proof of Theorem \ref{theo.bounds.spval.L}.
\hfill$\Box$

 %% -- %% -- %% -- %% -- %% -- %% -- %% -- %% -- %% -- %% -- %% -- %% -- %% -- %% -- %% -- %% -- %% -- %% -- %% -- %% -- %% -- %% -- %% -- %% -- %% -- %%
\section{Proof of Theorem \ref{itheo.large.sha}}

In this section, we gather our results so far to prove our main result (Theorem \ref{itheo.large.sha} in the introduction).
We have already proved assertions {\it (1)} and {\it(2)} of that theorem: see Proposition \ref{prop.noniso} and Corollary \ref{coro.BSD}{\it(2)}, respectively.
In the following two subsections, we prove assertions {\it(4)} and {\it (3)} of Theorem \ref{itheo.large.sha}, in this order.

\subsection[p-part of sha]{The $p$-primary part of $\sha(E_{\gamma, a})$}
\label{sec.ppart.sha}

Let us first prove  assertion {\it(4)} of Theorem \ref{itheo.large.sha} concerning the $p$-primary part of $\sha(E_{\gamma, a})$. That is to say,

\begin{theo}\label{theo.ppart.sha}
Let $\F_q$ be a finite field of odd characteristic $p$, and $K=\F_q(t)$.
 For any $\gamma\in\F_q^\times$ and any integer $a\geq 1$, consider the elliptic curve $E_{\gamma, a}$ defined over $K$ by \eqref{eq.Wmod}.
Then the $p$-primary part of $\sha(E_{\gamma,a})$ is trivial. 
In other words, the integer $|\sha(E_{\gamma, a})|$ is relatively prime to $p$.
\end{theo}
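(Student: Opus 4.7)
The plan is to take the $p$-adic valuation of both sides of the BSD formula \eqref{eq.BSD.2} and check that they cancel exactly. Let $v_p$ denote the $p$-adic valuation on $\Q$ normalised by $v_p(q)=1$, which coincides with the restriction of $\ord_\gP$ to $\Q\subset\Qbar$. From \eqref{eq.invariants} we already know $v_p(H(E_{\gamma,a}))=(q^a+1)/2$; the whole argument thus reduces to proving the $p$-adic identity $v_p(L(E_{\gamma,a},q^{-1}))=-(q^a-1)/2$, since substituting this into \eqref{eq.BSD.2} yields $v_p(|\sha(E_{\gamma,a})|)= -\tfrac{q^a-1}{2} + \tfrac{q^a+1}{2} - 1 = 0$, which is the desired conclusion.

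To compute $v_p(L(E_{\gamma,a},q^{-1}))$, I would start from the factorisation $L(E_{\gamma,a},T)=\prod_{k=1}^{b}(1-z_kT)$, where $b = 2(q^a-1)$ and the slopes $\lambda_k = \ord_\gP(z_k)$ are as in \S\ref{sec.def.Lfunc}. For each $k$ one has $\ord_\gP(z_k/q)=\lambda_k-1$, and Theorem \ref{theo.Lfunc.slopes} tells us that every $\lambda_k$ is either $1/2$ or $3/2$; in particular $\ord_\gP(z_k/q) \neq 0$ for all $k$. The elementary non-archimedean identity $\ord_\gP(1-x)=\min(0,\ord_\gP(x))$, valid whenever $\ord_\gP(x)\neq 0$, then gives $\ord_\gP(1-z_k/q)=-1/2$ if $\lambda_k=1/2$ and $\ord_\gP(1-z_k/q)=0$ if $\lambda_k=3/2$. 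Since exactly $b/2$ of the slopes fall in each case, summing these local contributions produces $\ord_\gP(L(E_{\gamma,a},q^{-1})) = -b/4 = -(q^a-1)/2$; as $L(E_{\gamma,a},q^{-1})$ is a positive rational number, this coincides with $v_p(L(E_{\gamma,a},q^{-1}))$.

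The proof is then concluded by the valuation bookkeeping described in the first paragraph. The only nontrivial ingredient is Theorem \ref{theo.Lfunc.slopes}, which is already available; a pleasant feature of the argument is that the reason $p$ does not divide $|\sha(E_{\gamma, a})|$ is precisely that the slope sequence is symmetric about $1$ but avoids the value $1$ itself. If some $\lambda_k$ equalled $1$, the simple rule $\ord_\gP(1-z_k/q)=\min(0,\lambda_k-1)$ would fail and a finer analysis would be required; fortunately Theorem \ref{theo.Lfunc.slopes} rules this out, so I do not anticipate any serious obstacle.
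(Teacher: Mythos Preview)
Your proof is correct and follows essentially the same route as the paper's: take the $\gP$-adic valuation of the BSD identity \eqref{eq.BSD.2}, and reduce to showing $\ord_\gP L(E_{\gamma,a},q^{-1})=-(q^a-1)/2$ via the slope information and the non-archimedean rule $\ord_\gP(1-x)=\min(0,\ord_\gP x)$ for $\ord_\gP x\neq 0$. The only cosmetic difference is that the paper computes this valuation factor-by-factor from the explicit product of Theorem~\ref{theo.Lfunc} (using the valuations of $\gaun(v)\kln_\gamma(v)$ and $\gaun(v)\kln'_\gamma(v)$ derived in \S\ref{sec.padic.slopes}), whereas you invoke the already-packaged slope sequence of Theorem~\ref{theo.Lfunc.slopes}; the underlying computation is identical.
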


	\begin{proof}
	Recall from \S\ref{sec.padic.slopes} that $\ord_\gP:\Qbar^\times\to\Q$ denotes an extension of the $p$-adic value to $\Qbar$, normalised so that $\ord_\gP(q)=1$.
	Taking $\gP$-adic valuation of both sides of the BSD formula \eqref{eq.BSD.2}, we obtain that
	\begin{equation}\label{eq.ppart.sha}
	\ord_\gP|\sha(E_{\gamma, a})| 
	= \ord_\gP L(E_{\gamma, a}, q^{-1}) +\ord_\gP H(E_{\gamma, a}) -1.	
	\end{equation}
	Our formula \eqref{eq.invariants} for the height of $E_{\gamma, a}$ implies that $ \ord_\gP H(E_{\gamma, a}) -1 = {(q^a-1)}/{2}$. 
	To conclude the proof, it therefore suffices to prove that $\ord_\gP L(E_{\gamma, a}, q^{-1}) = -(q^a-1)/2$. 
	Indeed one would then have $\ord_\gP|\sha(E_{\gamma, a})|=0$, and \eqref{eq.ppart.sha} would directly show that $\sha(E_{\gamma, a})$ has trivial $p$-primary part (by the structure theorem for finite abelian groups).
	
	We now proceed to compute $\ord_\gP L(E_{\gamma, a}, q^{-1})$. 
	Evaluating at $T=q^{-1}$ the expression for $L(E_{\gamma, a}, T)$ obtained in Theorem \ref{theo.Lfunc}, and taking $\gP$-adic valuations on both sides of the resulting identity yields that
	\[ \ord_\gP L(E_{\gamma, a}, q^{-1}) 
	= \sum_{v\in P_q(a)} \ord_\gP\left(1-\frac{\gaun(v)\kln_\gamma(v)}{q^{\deg v}}\right) + \ord_\gP\left(1-\frac{\gaun(v)\kln'_\gamma(v)}{q^{\deg v}}\right).\]
	The results proved in \S\ref{sec.padic.slopes} imply that, for all places $v\in P_q(a)$, we have 
	\[  \left\{\ord_\gP\left({\gaun(v)\kln_\gamma(v)}{q^{-\deg v}}\right),  \ord_\gP\left({\gaun(v)\kln'_\gamma(v)}{q^{-\deg v}}\right)\right\}
	=\left\{-\frac{\deg v}{2}, \frac{\deg v}{2}\right\}.\]
	Using the cases of equality in the non-archimedean triangle inequality, we then obtain that
	\[ \ord_\gP L(E_{\gamma, a}, q^{-1}) 
	= \sum_{v\in P_q(a)} \min\left\{0, -\frac{\deg v}{2} \right\} + \min\left\{0,  \frac{\deg v}{2} \right\} 
	= -\frac{1}{2}\sum_{v\in P_q(a)} \deg v.\]
	As has already been observed, we have $\sum_{v\in P_q(a)} \deg v = |\mathbb{G}_m(\F_{q^a})| =q^a-1$.
	We therefore conclude that $\ord_\gP L(E_{\gamma, a}, q^{-1}) = -(q^a-1)/2$, as was to be shown.
	\end{proof}

	\begin{rema}
	\begin{enumerate}[(1)]
	\item	
	In a very recent paper \cite{Ulmer_BSalg}, Ulmer introduces the notion of  \emph{dimension of $\sha$} for abelian varieties over $\F_q(t)$ whose Tate--Shafarevich group is finite.
	In the special case of $E_{\gamma, a}$, this invariant is defined as follows:
	for all integers $n\geq 1$, let $K_n:=\F_{q^n}(t)$ and consider 
	\[\dim\sha(E_{\gamma, a}):=\lim_{n\to\infty} \frac{\log\big|\sha\big(E_{\gamma, a}\times_{K}K_n \big/ K_n\big)[p^\infty]\big|}{\log q^n}.\]
	Proposition 4.1 of \cite{Ulmer_BSalg} shows that the limit indeed exists and is a nonnegative integer: it is called the  \emph{dimension of $\sha$  of $E_{\gamma, a}$}.
	Furthermore, \cite[Prop.\ 4.2]{Ulmer_BSalg} provides an expression for~$\dim\sha(E_{\gamma, a})$ in terms of  the $p$-adic slopes of the $L$-function of $E_{\gamma, a}$. 
	Using that expression and our Theorem \ref{theo.Lfunc.slopes}, an easy calculation yields that $\dim\sha(E_{\gamma,a}) = 0$.
	\item 
	The previous item show that %the sequence $n\mapsto \log \big|\sha(E_{\gamma, a}\times_K K_n/K_n)[p^\infty]\big|$ is
	the order of the $p$-primary part of the Tate--Shafarevich group of the base-changed elliptic curve $E_{\gamma, a}\times_K K_n$ over $K_n$ grows slowly with $n$
	(its $\log$ is $o(n)$). % as $n\geq 1$ grows. 
	By replacing~$q$ by $q^n$ in the proof of Theorem \ref{theo.ppart.sha}, one can actually prove the stronger statement that, for all $n\geq 1$, 
	\[|\sha(E_{\gamma, a}\times_K K_n/K_n)[p^\infty]|=1.\] 
	\end{enumerate}
	\end{rema}

\subsection[Bounds on the Tate-Shafarevich group]{The size of $\sha(E_{\gamma, a})$}
\label{sec.large.sha}

Finally, we prove  assertion {\it(3)} of Theorem~\ref{itheo.large.sha} about the size of $\sha(E_{\gamma, a})$.
We actually show a slightly more precise result: 

	\begin{theo}\label{theo.bounds.sha.H}
	 Let $\F_q$ be a finite field of odd characteristic and $K=\F_q(t)$.
	 For any  $\gamma\in\F_q^\times$ and any integer $a\geq 1$,  consider the elliptic curve $E_{\gamma, a}$ defined over $K$ by \eqref{eq.Wmod}.
	 Then, as $a\to\infty$, we have
	 \[ |\sha(E_{\gamma, a})| 
	 = H(E_{\gamma, a})^{1+O(1/a)},\]
	 where the implicit constant is effective and depends at most on $q$.
	\end{theo}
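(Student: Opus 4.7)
The plan is to derive Theorem \ref{theo.bounds.sha.H} as a direct consequence of two already-established results: the explicit BSD formula \eqref{eq.BSD.2} from Corollary \ref{coro.BSD}, and the two-sided estimate on the central $L$-value from Theorem \ref{theo.bounds.spval.L}. At this stage all the deep work has been done upstream (non-vanishing of $L(E_{\gamma,a}, q^{-1})$, finiteness of $\sha$, computation of the Tamagawa number and torsion, and the distribution of Kloosterman angles), so only a short bookkeeping step remains.

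Concretely, I would start from the BSD identity
\[
|\sha(E_{\gamma, a})| = q^{-1}\cdot H(E_{\gamma, a}) \cdot L(E_{\gamma, a}, q^{-1})
\]
and take logarithms to obtain
\[
\log|\sha(E_{\gamma, a})| = \log H(E_{\gamma, a}) \;-\;\log q\;+\;\log L(E_{\gamma, a}, q^{-1}).
\]
Dividing through by $\log H(E_{\gamma, a})$ yields
\[
\frac{\log|\sha(E_{\gamma, a})|}{\log H(E_{\gamma, a})} \;=\; 1 \;-\; \frac{\log q}{\log H(E_{\gamma, a})} \;+\; \frac{\log L(E_{\gamma, a}, q^{-1})}{\log H(E_{\gamma, a})}.
\]

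Next I would control the two error terms. From \eqref{eq.invariants} one has $\log H(E_{\gamma, a}) = \tfrac{q^a+1}{2}\log q$, which grows like $q^a$, so the middle term on the right-hand side is $O(q^{-a})$ and is negligible compared to $1/a$ for all $a\geq 1$. The last term is precisely the quantity bounded in Theorem \ref{theo.bounds.spval.L}, which supplies constants $c_1, c_2 > 0$ (depending only on $q$) such that $-c_1/a \leq \log L(E_{\gamma, a}, q^{-1})/\log H(E_{\gamma, a}) \leq c_2/a$. Combining these two estimates gives
\[
\frac{\log|\sha(E_{\gamma, a})|}{\log H(E_{\gamma, a})} = 1 + O(1/a),
\]
with an effective implicit constant depending only on $q$, which is the desired statement after exponentiation.

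There is no real obstacle: since Theorem \ref{theo.bounds.spval.L} and Corollary \ref{coro.BSD} are in hand, the proof is purely formal. The only point that deserves a brief mention is checking that the contributions of the Tamagawa number $\tam(E_{\gamma,a}) = 4$ and the order $|E_{\gamma,a}(K)| = 2$ appearing in the BSD formula \eqref{eq.conj.BSD}, which have already been absorbed into the factor $q^{-1}$ in \eqref{eq.BSD.2}, contribute only $O(1)$ to $\log|\sha|$ and are therefore swallowed by the $O(1/a) \cdot \log H(E_{\gamma,a})$ term. This completes the proof of Theorem \ref{itheo.large.sha}\textit{(3)}.
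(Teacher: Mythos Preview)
Your proposal is correct and follows essentially the same route as the paper: take logarithms of the BSD identity \eqref{eq.BSD.2}, divide by $\log H(E_{\gamma,a})$, and bound the two error terms using \eqref{eq.invariants} and Theorem \ref{theo.bounds.spval.L}. The paper phrases the bound on ${\log q}/{\log H(E_{\gamma,a})}$ as $o(1/\log\log H(E_{\gamma,a}))$ rather than $O(q^{-a})$, but this is the same estimate under the change of scale you already noted.
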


\noindent
We know  by \eqref{eq.invariants} that $H(E_{\gamma, a}) = N(E_{\gamma, a})^{1/4}$. 
Hence, we deduce from the above that 

	\begin{coro}\label{coro.bounds.sha.N}
	In the same setting,  as $a\to\infty$, we have
	 \[ |\sha(E_{\gamma, a})| 
	 = N(E_{\gamma, a})^{1/4+O(1/a)}.\]
	\end{coro}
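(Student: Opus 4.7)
The plan is to deduce Theorem \ref{theo.bounds.sha.H} directly from the BSD formula of Corollary \ref{coro.BSD}\eqref{coro.rk.0} combined with the two-sided estimate on the central $L$-value proved in Theorem \ref{theo.bounds.spval.L}. The whole argument is a matter of taking logarithms and tracking the dominant terms.

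First, I would rewrite the BSD formula \eqref{eq.BSD.2} as
\[|\sha(E_{\gamma, a})| = q^{-1}\cdot H(E_{\gamma, a})\cdot L(E_{\gamma, a}, q^{-1}),\]
which is legitimate since we already know (Corollary \ref{coro.BSD}\eqref{coro.finite.sha}) that $\sha(E_{\gamma, a})$ is finite and (Theorem \ref{theo.nonvanishing.Lfunc}) that $L(E_{\gamma, a}, q^{-1})\neq 0$. Taking $\log$ on both sides and dividing by $\log H(E_{\gamma, a})$, which is non-zero and in fact tends to $\infty$ with $a$, yields
\[\frac{\log|\sha(E_{\gamma, a})|}{\log H(E_{\gamma, a})} = 1 + \frac{\log L(E_{\gamma, a}, q^{-1})}{\log H(E_{\gamma, a})} - \frac{\log q}{\log H(E_{\gamma, a})}.\]

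Next, I would feed in the two inputs. By Theorem \ref{theo.bounds.spval.L}, the middle term on the right-hand side is bounded in absolute value by $\max(c_1, c_2)/a$, where $c_1, c_2$ depend at most on $q$. For the last term, formula \eqref{eq.invariants} gives $\log H(E_{\gamma, a}) = \tfrac{1}{2}(q^a+1)\log q$, so $\log q / \log H(E_{\gamma, a}) = O(q^{-a}) = O(1/a)$ (with an implicit constant depending only on $q$). Combining these two estimates, one obtains
\[\frac{\log|\sha(E_{\gamma, a})|}{\log H(E_{\gamma, a})} = 1 + O\!\left(\frac{1}{a}\right) \qquad (\text{as } a\to\infty),\]
with an effective implicit constant depending only on $q$. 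Exponentiating gives exactly the statement $|\sha(E_{\gamma, a})| = H(E_{\gamma, a})^{1+O(1/a)}$.

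There is no real obstacle here: all the analytic and arithmetic work has been done in the preceding sections (the BSD formula requires the non-vanishing of the central value proved in Theorem \ref{theo.nonvanishing.Lfunc}, and the two-sided estimate on $L(E_{\gamma, a}, q^{-1})$ is exactly Theorem \ref{theo.bounds.spval.L}). The corollary for the conductor then follows by plugging in the relation $H(E_{\gamma, a}) = N(E_{\gamma, a})^{1/4}$ from \eqref{eq.invariants}, which merely rescales the $O(1/a)$ error.
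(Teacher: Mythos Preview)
Your proposal is correct and follows essentially the same route as the paper: you prove Theorem \ref{theo.bounds.sha.H} by taking logarithms in the BSD formula \eqref{eq.BSD.2}, controlling the $L$-value term via Theorem \ref{theo.bounds.spval.L} and the $\log q$ term via \eqref{eq.invariants}, and then deduce the corollary from $H(E_{\gamma,a}) = N(E_{\gamma,a})^{1/4}$. One minor slip: the BSD formula you invoke is item \textit{(3)} of Corollary \ref{coro.BSD}, not item \eqref{coro.rk.0}.
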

	
	\begin{proof}[Proof (of Theorem \ref{theo.bounds.sha.H})]
	We note that, by \eqref{eq.invariants}, $\log \log H(E_{\gamma, a})$ and $a$ have the same order of magnitude when $a\to\infty$, the involved constants depending at most on $q$.
	The elliptic curve $E_{\gamma, a}$ satisfies the BSD conjecture (see Corollary \ref{coro.BSD}).
	Taking the logarithm of both sides of the BSD formula \eqref{eq.BSD.2} and reordering terms yields that
	\begin{equation}\label{eq.bounds.inter}
	\frac{\log|\sha(E_{\gamma, a})|}{\log H(E_{\gamma, a})} =  1 - \frac{\log q}{\log H(E_{\gamma, a})} + \frac{\log L(E_{\gamma, a}, q^{-1})}{\log H(E_{\gamma, a})}.
	\end{equation}
	The term ${\log q}/{\log H(E_{\gamma, a})}$ is clearly $o\big(1/\log\log H(E_{\gamma, a})\big)$ as $a\to\infty$. 
	To control the right-most term, we put to use our bound on the central value $ L(E_{\gamma, a}, q^{-1})$: we deduce from Theorem \ref{theo.bounds.spval.L} that
	\[ \frac{\big|\log L(E_{\gamma, a}, q^{-1})\big|}{\log H(E_{\gamma, a})} = O\big( 1/\log\log H(E_{\gamma, a})\big), \]
	as $a\to\infty$, where the implicit constant depends at most on $q$.
	The proof is now completed by plugging these two estimates into \eqref{eq.bounds.inter}.
	\end{proof}
	
	\begin{rema}
	Theorem \ref{theo.bounds.sha.H} can be interpreted as an analogue of the Brauer--Siegel theorem for the sequences~$\{E_{\gamma, a}\}_{a\geq 1}$.
	We refer the reader to \cite{HP15, Hindry_BSinAG} for a detailed description of the analogue we have in mind: let us simply recall that Hindry and Pacheco have introduced the \emph{Brauer--Siegel ratio} of an abelian variety with finite Tate--Shafarevich group and that,
	in the case at hand, the Brauer--Siegel ratio  is given by
	$\BS(E_{\gamma, a})  = {\log|\sha(E_{\gamma, a})|}\big/{\log H(E_{\gamma, a})}$.
	For an elliptic curve with positive Mordell--Weil rank, the Brauer--Siegel ratio also includes the N\'eron--Tate regulator.
	With this notation, Theorem \ref{theo.bounds.sha.H} can be rewritten in a compact form:
	\begin{equation}\label{eq.BS.lim}
	\BS(E_{\gamma,a}) =1 + O\big(1/a\big) 
	\qquad(\text{as } a\to\infty).	
	\end{equation}	
	There are only a handful of sequences of elliptic curves over $K$ for which one can prove that the Brauer--Siegel ratio has a limit and that this limit is $1$ (see \cite{HP15, Griffon_PHD, Griffon_LegAS, Hindry_BSinAG} and references therein). 
	The families studied in the present paper therefore provide further examples of that behaviour. 
	However, it seems interesting to remark that the sequences $\{E_{\gamma, a}\}_{a\geq1}$ consist of rank $0$ elliptic curves, % for which the Brauer--Siegel ratio is known to have limit $1$.
	whereas previous articles considered sequences with unbounded rank.
	 \end{rema}

%% -- END CONTENT  -- %%  
\noindent\hfill\rule{7cm}{0.5pt}\hfill\phantom{.}

%% -- THANKS  -- %% 
\paragraph{Acknowledgements} 
This article is partly based on GdW's Leiden Master's thesis \cite{deWit_thesis}, written under the supervision of RG.
The authors would like to thank Bas Edixhoven, Marc Hindry and Douglas Ulmer for fruitful conversations about this work and useful comments on earlier versions thereof.

GdW received funding from the ALGANT Master Programme during his studies at Universit\`a degli studi di Milano and Universiteit Leiden.
Work on this project was started at Universiteit Leiden, and RG now works at Universit\"at Basel. 
These institutions are gratefully thanked for creating great working conditions, and for their financial support.  
RG also acknowledges funding from Agence Nationale de la Recherche (Grant ANR-17-CE40-0012 FLAIR).

\small

%% -- ADDRESSES -- %% 
\normalsize
\vfill

\noindent\rule{7cm}{0pt}

{Richard Griffon} {(\it \href{richard.griffon@unibas.ch}{richard.griffon@unibas.ch})}  --
{\sc Departement Mathematik, Universit\"at Basel,} 
 
Spiegelgasse 1, 4051 Basel (Switzerland). 

\medskip
{Guus de Wit} {(\it \href{mailto:dewit.guus@gmail.com}{{dewit.guus@gmail.com}})} --
{\sc Mathematisch Instituut, Universiteit Leiden,} 
 
PO Box 9512, 2300 RA Leiden (the Netherlands).

 %% -- END DOC -- %%

\begin{thebibliography}{MW94}
\bibitem[GS95]{GoldfeldSzpiro}
Dorian Goldfeld and Lucien Szpiro.
\newblock Bounds for the order of the {T}ate-{S}hafarevich group.
\newblock {\em Compositio Math.}, 97(1-2):71--87, 1995.
%\newblock Special issue in honour of Frans Oort.

\bibitem[Gri16]{Griffon_PHD}
Richard Griffon.
\newblock {\em {A}nalogues du th{\'e}or{\`e}me de {B}rauer-{S}iegel pour
  quelques familles de courbes elliptiques}.
\newblock PhD thesis, Universit{\'e} Paris Diderot, July 2016.
\newblock (available at
  \href{http://math.richardgriffon.me/thesis/Griffon_thesis.pdf}{math.richardgriffon.me/thesis/Griffon\_thesis.pdf}).

\bibitem[Gri18]{Griffon_LegAS}
Richard Griffon.
\newblock Bounds on special values of ${L}$-functions of elliptic curves in an
  {A}rtin-{S}chreier family.
\newblock {\em European Journal of Mathematics}, 5(2):476--517, 2018.

\bibitem[GU19]{GriffonUlmer}
Richard Griffon and Douglas Ulmer.
\newblock On the arithmetic of a family of twisted constant elliptic curves.
\newblock (Preprint \href{https://arxiv.org/abs/1903.03901}{ArXiv:1903.03901}),
  March 2019.
  
\bibitem[Gro11]{Gross_bsd}
Benedict~H. Gross.
\newblock Lectures on the conjecture of {B}irch and {S}winnerton-{D}yer.
\newblock In {\em Arithmetic of {$L$}-functions}, vol.\ 18 of {\em IAS/Park
  City Math. Ser.}, pp. 169--209. Amer. Math. Soc., Providence, RI, 2011.

\bibitem[HP16]{HP15}
Marc Hindry and Am{\`\i}lcar Pacheco.
\newblock An analogue of the {B}rauer--{S}iegel theorem for abelian varieties
  in positive characteristic.
\newblock {\em Moscow Math. J.}, 16(1):45--93, January--March 2016.

\bibitem[Hin19]{Hindry_BSinAG}
Marc Hindry.
\newblock Analogues of {B}rauer-{S}iegel theorem in arithmetic geometry.
\newblock In {\em Arithmetic geometry: computation and applications}, vol.\
  722 of {\em Contemp. Math.}, pp.\ 19--41. Amer. Math. Soc., Providence, RI,
  2019.
  
\bibitem[LN97]{LidlN}
Rudolf Lidl and Harald Niederreiter.
\newblock {\em Finite fields}, volume~20 of {\em Encyclopedia of Mathematics
  and its Applications}.
\newblock Cambridge University Press, second edition, 1997.
\newblock With a foreword by P. M. Cohn.

\bibitem[MM94]{MaiMurty_QuadTwists}
Liem {Mai} and M.~Ram {Murty}.
\newblock {A note on quadratic twists of an elliptic curve}.
\newblock In {\em {Elliptic curves and related topics}}, pp.\ 121--124.
  American Mathematical Society, 1994.

\bibitem[MW94]{MiWa}
Maurice Mignotte and Michel Waldschmidt.
\newblock On algebraic numbers of small height: linear forms in one logarithm.
\newblock {\em J. Number Theory}, 47(1):43--62, 1994.

\bibitem[Mil75]{Milne_conjAT}
James~S. Milne.
\newblock On a conjecture of {A}rtin and {T}ate.
\newblock {\em Ann. of Math. (2)}, 102(3):517--533, 1975.

\bibitem[PU16]{UlmerPries}
Rachel Pries and Douglas Ulmer.
\newblock Arithmetic of abelian varieties in {A}rtin-{S}chreier extensions.
\newblock {\em Trans. Amer. Math. Soc.}, 368(12):8553--8595, 2016.

\bibitem[Ros02]{Rosen}
Michael Rosen.
\newblock {\em Number theory in function fields}, volume 210 of {\em Graduate
  Texts in Mathematics}.
\newblock Springer-Verlag, New York, 2002.

\bibitem[SS10]{SchShio}
Matthias Sch{{\"u}}tt and Tetsuji Shioda.
\newblock Elliptic surfaces.
\newblock In {\em Algebraic geometry in {E}ast {A}sia---{S}eoul 2008},
  vol.\ 60 of {\em Adv. Stud. Pure Math.}, pp.\ 51--160. Math. Soc. Japan,
  Tokyo, 2010.
  
\bibitem[Sil94]{ATAEC}
Joseph~H. Silverman.
\newblock {\em Advanced topics in the arithmetic of elliptic curves}, volume
  151 of {\em Graduate Texts in Mathematics}.
\newblock Springer-Verlag, New York, 1994.

\bibitem[Sil09]{AEC}
Joseph~H. Silverman.
\newblock {\em The arithmetic of elliptic curves}, volume 106 of {\em Graduate
  Texts in Mathematics}.
\newblock Springer, Dordrecht, 2nd edition, 2009.

\bibitem[Tat66]{Tate_BSD}
John~T. Tate.
\newblock On the conjectures of {B}irch and {S}winnerton-{D}yer and a geometric
  analog.
\newblock In {\em S{\'e}minaire {B}ourbaki, {V}ol.\ 9}, pages 415--440 (Exp.\
  No.\ 306). Soc. Math. France, Paris, 1965/66.

\bibitem[Ulm11]{UlmerParkCity}
Douglas Ulmer.
\newblock Elliptic curves over function fields.
\newblock In {\em Arithmetic of {$L$}-functions}, vol.\ 18 of {\em IAS/Park
  City Math. Ser.}, pp.\ 211--280. Amer. Math. Soc., Providence, RI, 2011.

\bibitem[Ulm19]{Ulmer_BSalg}
Douglas Ulmer.
\newblock {O}n the {B}rauer-{S}iegel ratio for abelian varieties over function
  fields.
\newblock {\em Algebra \& Number Theory}, 13(5):1069--1120, 2019.

\bibitem[dW98]{deWeger_ABC}
Benjamin M.~M. de~Weger.
\newblock {$A+B=C$} and big {$\sha$'s}.
\newblock {\em Quart. J. Math. Oxford Ser. (2)}, 49(193):105--128, 1998.

\bibitem[dW18]{deWit_thesis}
Guus de~Wit.
\newblock {\em Elliptic curves over function fields with large {T}ate--{S}hafarevich groups}. 
\newblock Master's thesis, Universiteit Leiden, July 2018.
\newblock (available at
\href{https://www.math.u-bordeaux.fr/~ybilu/algant/documents/theses/de_Wit.pdf}{www.math.u-bordeaux.fr/\~{}ybilu/algant/algant\_theses.php}). 
%{www.math.u-bordeaux.fr/\~{}ybilu/algant/documents/theses/de\_Wit.pdf}).


\end{thebibliography}
\end{document}